\DeclareMathAlphabet{\mathpzc}{OT1}{pzc}{m}{it}
\renewcommand*{\backref}[1]{}
\renewcommand*{\backrefalt}[4]{%
	\ifcase #1 (Not cited.)%
	\or        (Cited on page~#2.)%
	\else      (Cited on pages~#2.)%
	\fi}
\newtheorem{theorem}{Theorem}[section]
\newtheorem*{theorem*}{Theorem}
\newtheorem{lemma}[theorem]{Lemma}
\newtheorem{proposition}[theorem]{Proposition}
\newtheorem{corollary}[theorem]{Corollary}
\newtheorem{conjecture}[theorem]{Conjecture}
\newtheorem{definition}[theorem]{Definition}
\newtheorem{problem}[theorem]{Problem}
\theoremstyle{remark}
\newtheorem{remark}[theorem]{Remark}
\newtheorem{example}[theorem]{Example}
\newtheorem*{claim}{Claim}
\newcommand{\bC}{\mathbb{C}}
\newcommand{\bF}{\mathbb{F}}
\newcommand{\bQ}{\mathbb{Q}}
\newcommand{\bP}{\mathbb{P}}
\newcommand{\bR}{\mathbb{R}}
\newcommand{\bZ}{\mathbb{Z}}
\newcommand{\cE}{\mathcal{E}}
\newcommand{\cF}{\mathcal{F}}
\newcommand{\cG}{\mathcal{G}}
\newcommand{\cI}{\mathcal{I}}
\newcommand{\cO}{\mathcal{O}}
\newcommand{\cQ}{\mathcal{Q}}
\newcommand{\Bl}{{\rm Bl}}
\newcommand{\rank}{{\rm rank}}
\newcommand{\Sing}{{\rm Sing}}
\def\<{\langle}
\def\>{\rangle}
\def\ra{\rightarrow}
\def\dra{\dashrightarrow}
\def\xra{\xrightarrow}
\author{Yen-An Chen}
\address{School of Mathematics, Korea Institute for Advanced Study, 85 Hoegi-ro, Dongdaemun-gu, Seoul 02455, Republic of Korea}
\email{yachen@kias.re.kr}
\author{Ching-Jui\ Lai}
\address[]{Department of Mathematics, National Cheng Kung University, Tainan 70101, Taiwan
}
\email{cjlai72@mail.ncku.edu.tw}
\begin{document}
\title[]{On Slope Unstable Fano Varieties}

	\subjclass[2020]{14J26, 14J45, 14E30.}
	\keywords{Fano varieties, foliations, slope stability.}
\begin{abstract} For Fano varieties, significant progress has been made recently in the study of $K$-stability, while the understanding of the weaker but more algebraic concept of $(-K)$-slope stability remains intricate. For instance, a conjecture attributed to Iskovskikh states that the tangent bundle of a Picard rank one Fano manifold is slope stable. Peternell-Wi\'sniewski and Hwang proved this conjecture up to dimension five in 1998, but Kanemitsu later disproved it in 2021. To address this gap in understanding, we present a method that aims to characterize the geometry associated with the maximal destabilizing sheaf of the tangent sheaf of a Fano variety. This approach utilizes modern advancements in the foliated minimal model program. In dimension two, our approach leads to a complete classification of $(-K)$-slope unstable weak del Pezzo surfaces with canonical singularities. As by-products, we provide the first conceptual proof that \(\mathbb{P}^1 \times \mathbb{P}^1\) and \(\mathbb{F}_1\) are the only $(-K)$-slope unstable nonsingular del Pezzo surfaces, recovering a classical result of Fahlaoui in 1989. We also uncover a phenomenon that does not occur for Fano manifolds: there exists a del Pezzo surface with type A singularities admitting a weak K\"ahler-Einstein metric, yet whose tangent sheaf is slope unstable.
\end{abstract}

	\maketitle

\section{Introduction} 
We work over the field of complex numbers $\bC.$ 

A Fano (resp. weak Fano) manifold is a nonsingular projective variety $X$ whose anti-canonical divisor $-K_X$ is ample (resp. nef and big). Suppose $n=\dim X$. We say $T_X$ is $(-K_X)$-slope stable if for any subsheaf $\cE\subseteq T_X$, the slope inequality holds: $$\mu(\cE):=\frac{c_1(\cE)\cdot(-K_X)^{n-1}}{\rank(\cE)}<\mu(T_X)=\frac{(-K_X)^n}{n}.$$ An extremal contraction is a projective morphism $f:X\rightarrow Y$ with connected fibers such that $K_X$ is relatively anti-ample, i.e., $f$ is a Mori contraction of an extremal face (of $\dim\geq1$) of the cone of curves. We focus on the following conjecture in this work. 

\begin{conjecture}\cite[Conjecture 3.21]{P01:pos}\label{conj:P} The tangent bundle $T_X$ of a Fano manifold $X$ is $(-K_X)$-slope unstable only if the relative tangent bundle of a $K_X$-negative extremal contraction destabilizes it. 
\end{conjecture}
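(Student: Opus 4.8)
The plan is to turn slope instability into the existence of a destabilizing \emph{foliation}, and then to constrain that foliation with the foliated minimal model program. So suppose $T_X$ is $(-K_X)$-slope unstable, and let $\cF\subseteq T_X$ be the first term of the Harder--Narasimhan filtration of $T_X$ with respect to the polarization $-K_X$; then $\cF$ is saturated and semistable with $\mu(\cF)=\mu_{\max}(T_X)>\mu(T_X)>0$. The standard bracket computation shows that $\cF$ is a foliation: the Lie bracket induces an $\cO_X$-linear map $\cF\otimes_{\cO_X}\cF\to T_X/\cF$ whose source, in characteristic $0$, is semistable of slope $2\mu(\cF)>\mu(\cF)>\mu_{\max}(T_X/\cF)$, hence the map vanishes. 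Writing $K_\cF=-\det\cF$, slope instability becomes the numerical inequality $K_\cF\cdot(-K_X)^{n-1}<0$; since $(-K_X)^{n-1}$ is a movable curve class, $K_\cF$ is not pseudoeffective, and in particular not nef.

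Next I would show that $\cF$ is algebraically integrable with rationally connected general leaves. Let $C$ be a general complete intersection of $n-1$ members of $|m(-K_X)|$ with $m\gg 0$. By the Mehta--Ramanathan restriction theorem the Harder--Narasimhan filtration of $T_X$ restricts to that of $T_X|_C$, so $\cF|_C$ is semistable of positive degree on the smooth curve $C$, hence ample; and the $C$'s sweep out $X$ while avoiding $\Sing(X)\cup\Sing(\cF)$. The Bogomolov--McQuillan algebraicity criterion (in the form of Kebekus--Sol\'a Conde--Toma, or in the movable-slope form of Campana--P\u{a}un) then gives that the $\cF$-leaf through a general point of $X$ is algebraic with rationally connected closure. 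Thus $\cF$ is the foliation by fibres of a dominant rational map $\varphi\colon X\dashrightarrow Y$, and a free rational curve inside a general fibre is a second reason that $K_\cF$ is not nef.

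The decisive remaining point is to show that $\varphi$ is a genuine \emph{morphism}. Granting this, the general fibre of $\varphi$ is rationally connected with $-K_X$ ample on it, so $-K_X$ is $\varphi$-ample; then $\varphi$ is the contraction of the positive-dimensional, $K_X$-negative extremal face of $\NE(X)$ spanned by the classes of curves in its fibres --- i.e.\ a $K_X$-negative extremal contraction --- and its saturated relative tangent sheaf, which agrees with $\cF$ on a big open set and is therefore equal to $\cF$, destabilizes $T_X$, as required. To prove $\varphi$ is a morphism one would run the foliated MMP for the algebraically integrable foliation $\cF$ (after an $\cF$-dlt modification of the pair $(X,\cF)$ if necessary): since $K_\cF$ is not nef there is a $K_\cF$-negative extremal ray, and contracting such rays --- keeping track, through adjunction along the leaves, of $K_\cF$ versus $K_X$ --- ends in a foliated Mori fibre space $X'\to Z$ whose fibres are the leaf-closures; one must then argue that the birational map $X\dashrightarrow X'$ is trivial, so that $X\to Z$ is the sought contraction. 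This comparison is the main obstacle, and it cannot always be won: in Picard rank one there is no nontrivial $K_X$-negative extremal contraction, so Kanemitsu's Picard-rank-one Fano manifolds with unstable tangent bundle show that $\varphi$ need not be a morphism and that the literal conjecture fails. The real content is thus to isolate the hypotheses --- or the corrected conclusion --- under which the foliated MMP rebuilds a genuine Mori contraction out of $\cF$; this is exactly what becomes unconditional when $\dim X=2$, where $\cF$ has rank one and the foliated MMP and the classification of rank-one foliations on surfaces are completely understood, so that one reads off that a destabilizing $\cF$ is the fibration of a $\bP^1$- or conic-bundle structure (recovering $\bP^1\times\bP^1$ and $\bF_1$ among smooth del Pezzo surfaces), whereas higher dimensions require a substitute for that surface classification.
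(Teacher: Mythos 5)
The statement you were asked to prove is a \emph{conjecture}, cited by the paper from Peternell's survey, and the paper does not prove it -- indeed, the introduction explicitly records that it is \emph{false}: Kanemitsu's $14$-dimensional horospherical Fano manifold of Picard rank one has slope-unstable tangent bundle, yet admits no nontrivial $K_X$-negative extremal contraction whose relative tangent could serve as a destabilizer. You correctly diagnose this at the end of your write-up, which is to your credit, but it means there is no proof of the literal statement to compare yours against. What the paper offers instead is the weaker Problem~\ref{prob:main}, asking only that $\cF$ be induced \emph{birationally} by the relative tangent of an extremal contraction, together with Theorems~\ref{thm:nonsingular-wdP}--\ref{thm:canwdp} resolving the surface case.

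As a sketch of the paper's \emph{actual} strategy for its surface theorems, what you wrote is accurate up to the point you flag as ``the main obstacle.'' The paper does take the maximal destabilizing sheaf from the Harder--Narasimhan filtration, uses the bracket/Campana--P\u{a}un machinery to obtain an algebraically integrable foliation with rationally connected leaves and $K_\cF$ not pseudoeffective (Proposition~\ref{prop:nPSEF}), and then runs a $K_\cF$-MMP $X\dashrightarrow Y_+$ to a Mori fiber space $Y_+\to Z_+$. But it does \emph{not} attempt to show the rational map defining $\cF$ is a morphism, because that is false even on surfaces: in the examples $X_{n,i}$ of Theorem~\ref{thm:nonsingular-wdP} the destabilizer is the foliation induced by a fibration with a non-reduced fiber, which is strictly larger than the relative tangent sheaf of that fibration (Example~\ref{eg:one_2blowup} notes $\cF_{n,1}\neq T_{\varphi_1}$). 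Instead the paper splits into $\dim Z_+=1$ (explicit classification by $2$-blowups) and $\dim Z_+=0$ (ruled out via Property~$(*)$ modifications, foliated singularity analysis, and a toricity argument), and concludes only the birational version. You should therefore not frame this as a proof proposal for Conjecture~\ref{conj:P}; rather, present it as an outline of why the foliated-MMP approach recovers the correct, weaker statement in dimension two while the literal conjecture fails in higher dimension.
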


This conjecture is confirmed when $\dim X=2$ in \cite{Fah}, and $\dim X=3$ in \cite{St:F3}, both depending on the explicit classification of lower-dimensional Fano manifolds. Moreover, it is proved that $X=\bF_1$ and $\bF_0=\bP^1\times\bP^1$ are the only unstable nonsingular del Pezzo surfaces, where $T_{X/\bP^1}\subseteq T_X$ of the natural projections $X\rightarrow\bP^1$ is a destabilizing subsheaf. On the other hand, a folklore conjecture attributed to Iskovskikh, cf. \cite{A18:ICM}, asks whether the tangent bundle $T_X$ of a Fano manifold $X$ of Picard rank one is always $(-K_X)$-slope stable. This conjecture holds in $\dim X\leq5$ \cite{PW:stb2=1, H:StF} but fails in higher dimensions \cite{K:Fst} by a 14-dimensional horospherical Fano manifold $X$. Indeed, there is a non-trivial fibration $\widetilde{X}=\Bl_ZX\ra Y$ such that $T_{\widetilde{X}/Y}$ induces the maximal destabilizing sheaf (of rank two) of $T_X$. 

In the above discussion, in $\dim X=2$ the relative tangent $\cF=T_{X/\bP^1}$ is the \emph{maximal destabilizing sheaf} of the Harder-Narasimhan filtration of $T_X$, but this is not known in $\dim X\geq 3$. 
We consider the following generalization of Conjecture \ref{conj:P}. 
\begin{problem}\label{unst}\label{prob:main} For $X$ a Fano manifold with a $(-K_X)$-slope unstable tangent bundle, is the maximal destabilizing sheaf $\cF\subseteq T_X$ induced {\bf birationally} by the relative tangent of an extremal contraction? 
\end{problem}

A positive evidence is provided in \cite{SB92, H:StF}: the maximal destabilizing sheaf $\cF$ is an algebraically integrable foliation. We follow along this line by further utilizing the recent development of the foliation minimal model program (fMMP) \cite{CS:fMMP3, CS:fadj}, especially the analysis of foliated singularities \cite{Ch:lcfsing, CC:Tf} and a foliation sensible birational modification--the {\bf Property $(*)$ modification} (see \cite{ACSS:PosM} or Theorem \ref{prop:*}), to characterize the geometry of $(X,\cF)$. The canonical divisor $K_\cF$ of $\cF$ is defined by $\det(\cF)=\cO_X(-K_\cF)$ and it is not pseudo-effective (cf. Proposition \ref{prop:nPSEF}).  By running a $K_\cF$-MMP\footnote{\emph{Caution}: As $(X,\cF)$ might have singularities worse than $\cF$-dlt, the most general known setup in which one can run an fMMP, we instead run the classical log minimal model program. Nevertheless, this process only contracts $ \cF $-invariant curves (see Proposition \ref{prop:foliatedMMP}) and hence can be viewed as running an fMMP in a broader sense.} $X\dra Y$ on the Mori dream space $X$ and analyzing the foliated geometry associated with the Mori fiber space $f:Y\rightarrow Z$, we completely classify weak del Pezzo surfaces with unstable tangent bundles and provide the first conceptual proof of \cite{Fah}.    

We prepare some terminology before stating our results.
\begin{definition}\label{def:2blowup} 
    Suppose that $\pi: X\to B$ is a fibred surface, $p\in X$ a nonsingular point, and $F:=\pi^{-1}(\pi(p))$. The {\bf$2$-blowup} $\varphi: X''\rightarrow B$ with center $p$ is constructed as the following: let $\varphi': X'\to X$ be the nonsingular blowup at $p$ with the exceptional divisor $E$, $F'$ be the strict transform of $F$ on $X'$, and $p' := E\cap F'$, then consider $\varphi'': X''\to X'$ the nonsingular blowup at $p'$ and set $\varphi:=\varphi''\circ\varphi'$. 
\end{definition}

Recall \cite[Proposition 8.1.2]{Dol} that a nonsingular blowup $S'={\rm Bl}_pS\rightarrow S$ of a nonsingular weak del Pezzo surface $S$ remains weak del Pezzo if $K_S^2>1$ and $p$ is not on any $(-2)$-curve.

\begin{example}\label{eg:one_2blowup}
    Let $\varphi_1:X_{n,1}\rightarrow\bP^1$ be the $2$-blowup with center $p_1$ on the Hirzebruch surface $\pi:\bF_n\ra\bP^1$. For $n\in\{0,1,2\}$, $X_{n,1}$ is weak del Pezzo if $p_1$ is not on any $(-2)$-curve. If $\cF_{n,1}$ is the foliation induced by $T_{\varphi_1}$, then $\mu(\cF_{n,1})=3=\mu(T_{X_{n,1}})$ and it destabilizes $T_{X_{n,1}}$. Note that $\cF_{n,1}\neq T_{\varphi_1}$ since the fiber $\varphi^{-1}(\varphi(p_1))$ is non-reduced. 
\end{example}

\begin{example}\label{eg:two_2blowup}
For $n\in\{0,1,2\}$, let $\varphi_2:X_{n,2}\rightarrow\bP^1$ be the $2$-blowup with center $p_2\in X_{n,1}$, a surface from Example \ref{eg:one_2blowup}. If $p_2$ is neither on any $(-2)$-curve of $\bF_n$ nor the fiber containing $p_1$, then $X_{n,2}$ is weak del Pezzo and the foliation $\cF_{n,2}$ induced by $T_{\varphi_2}$ destabilizes $T_{X_{n,2}}$ with $\mu(\cF_{n,2})=2=\mu(T_{X_{n,2}})$. 
\end{example}

\begin{example}\label{eg:three_2blowup}
For $n\in\{0,1,2\}$, let $\varphi_3:X_{n,3}\rightarrow\bP^1$ be the $2$-blowup with center $p_3\in X_{n,2}$, a surface from Example \ref{eg:two_2blowup}. If $p_3$ is neither on any $(-2)$-curve of $\bF_n$ nor the fiber containing $p_1$ or $p_2$, then $X_{n,3}$ is weak del Pezzo and the foliation $\cF_{n,3}$ induced by $T_{\varphi_3}$ destabilizes $T_{X_{n,3}}$ with $\mu(\cF_{n,3})=1=\mu(T_{X_{n,3}})$. 
See Figure~\ref{fig:eg1.6} for a visualization. 
\end{example}

\begin{figure}
    \centering
    \begin{tikzpicture}
        \def\r{0.7}
        \draw (-5*\r,0) -- (5*\r,0);
        \draw (1*\r,-1*\r) -- (-1*\r,1*\r);
        \draw (-2*\r/3,\r/3) -- (-2*\r/3,8*\r/3);
        \draw (-1*\r,2*\r) -- (\r,4*\r);
        \draw[orange] (0*\r,-0.6*\r) circle (9pt) node {$-2$};
        \draw[orange] (0.1*\r,3.7*\r) circle (9pt) node {$-2$};
        \draw[orange] (-0.8*\r/3,1.5*\r) circle (9pt) node {$-1$};

        \draw (-2*\r,-1*\r) -- (-4*\r,1*\r);
        \draw (-11*\r/3,\r/3) -- (-11*\r/3,8*\r/3);
        \draw (-4*\r,2*\r) -- (-2*\r,4*\r);
        \draw[orange] (-3*\r,-0.6*\r) circle (9pt) node {$-2$};
        \draw[orange] (-2.9*\r,3.7*\r) circle (9pt) node {$-2$};
        \draw[orange] (-9.8*\r/3,1.5*\r) circle (9pt) node {$-1$};

        \draw (4*\r,-1*\r) -- (2*\r,1*\r);
        \draw (7*\r/3,\r/3) -- (7*\r/3,8*\r/3);
        \draw (2*\r,2*\r) -- (4*\r,4*\r);
        \draw[orange] (3*\r,-0.6*\r) circle (9pt) node {$-2$};
        \draw[orange] (3.1*\r,3.7*\r) circle (9pt) node {$-2$};
        \draw[orange] (8.2*\r/3,1.5*\r) circle (9pt) node {$-1$};
    \end{tikzpicture}
    \caption{A curve configuration of Example~\ref{eg:three_2blowup}. The horizontal line indicates the proper transform of the negative section on $\bF_n$, the top six lines indicate the exceptional curves, and the bottom three skew lines indicate the strict transform of the fiber of the canonical fibration on $\bF_n$.}\label{fig:eg1.6}
\end{figure}
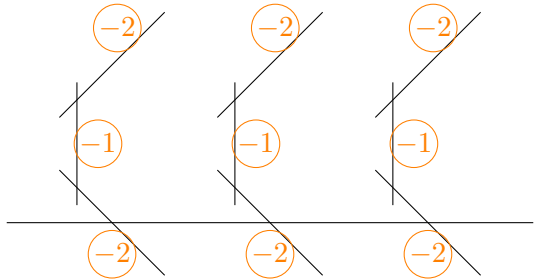

\begin{theorem}\label{thm:nonsingular-wdP}
Let $X$ be a nonsingular weak del Pezzo surface. If the tangent bundle $T_X$ is $(-K_X)$-slope unstable, then $X$ is either the Hirzebruch surface $\bF_n$ with $n\in\{0,1,2\}$, or $X\cong X_{n,1}, X_{n,2},$ or $X_{n,3}$ as described in Examples~\ref{eg:one_2blowup}, \ref{eg:two_2blowup}, and \ref{eg:three_2blowup}. In either case, $\cF$ is induced by the canonical fibration $X\rightarrow\bP^1$. 
\end{theorem}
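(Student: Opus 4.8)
The plan is to realise a destabilizing subsheaf as an algebraically integrable foliation coming from a $\bP^1$-fibration, to descend it along a $K_\cF$-minimal model program to a Mori fibre space, to recognise the latter as a relatively minimal model of the ruling of a Hirzebruch surface, and then to classify the surfaces $X$ lying over it by a slope count constrained by the weak del Pezzo condition. To set up, note that since $\rank T_X=2$ and $T_X$ is not slope stable, some proper subsheaf has slope $\geq\mu(T_X)$; replacing it by its saturation gives a saturated rank-one subsheaf $\cF\subseteq T_X$, automatically a foliation, with $\mu(\cF)=-K_\cF\cdot(-K_X)\geq\mu(T_X)=\tfrac12(-K_X)^2>0$. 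In particular $K_\cF$ is not pseudo-effective (Proposition~\ref{prop:nPSEF}); algebraic integrability of $\cF$ is the evidence recalled from \cite{SB92, H:StF} and will in any case re-emerge below, and $X\not\cong\bP^2$ since $T_{\bP^2}$ is stable, so $\rho(X)\geq2$.

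Since $X$ is a Mori dream space, we run a $K_\cF$-MMP $\phi\colon X\dra Y$. By Proposition~\ref{prop:foliatedMMP} it contracts only $\cF$-invariant curves: a $K_\cF$-negative extremal curve $C$ on a surface has $C^2\leq0$, so if it were not $\cF$-invariant the tangency inequality would give $0\leq\mathrm{tang}(\cF,C)=C^2+K_\cF\cdot C<0$. On the weak del Pezzo surface $X$ such a $C$ is a $(-1)$- or $(-2)$-curve, hence every step is a blow-down and $Y$ is again a weak del Pezzo surface (with at worst Du Val singularities) carrying the descended foliation $\cF_Y$; as $K_\cF$ is not pseudo-effective the program terminates at a Mori fibre space $f\colon Y\to Z$ with $-K_{\cF_Y}$ $f$-ample. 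If $\dim Z=1$, then $Z=\bP^1$, and applying the tangency inequality to a general fibre $\ell$ (for which $\ell^2=0$ and $K_{\cF_Y}\cdot\ell<0$) forces $\ell$ to be $\cF_Y$-invariant, so $\cF_Y=T_{Y/\bP^1}$; passing to a relatively minimal model $\bF_n\to\bP^1$ of the fibration, $\cF_Y$ becomes $T_{\bF_n/\bP^1}$, and $-K_{\bF_n}$ nef forces $n\in\{0,1,2\}$. If $\dim Z=0$, then $\rho(Y)=1$ and $-K_{\cF_Y}$ is ample, which on the relevant Picard-rank-one (Du Val) del Pezzo surfaces makes $\cF_Y$ a ``pencil of lines'', and a slope computation then shows that no iterated blow-up along $\cF$-invariant centers can restore $\mu(\cF)\geq\mu(T_X)$, so this case does not occur. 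We conclude that $\cF$ is induced by a $\bP^1$-fibration $g\colon X\to\bP^1$ whose relatively minimal model is $\bF_n$, $n\in\{0,1,2\}$, with $\cF=T_{\bF_n/\bP^1}$ there.

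It remains to classify the blow-ups realising $X$ as an iterated blow-up over $\bF_n$ (the relative minimalization $X\to\bF_n$ read in reverse). Every intermediate surface is a nonsingular weak del Pezzo, so no blow-up center lies on a $(-2)$-curve and $K_X^2=8-(\#\text{blow-ups})\geq1$; moreover each such blow-up automatically keeps the exceptional curve $\cF$-invariant, as the relevant foliations have only ordinary (non-dicritical) singularities. Using the adjunction $-K_\cF=-K_{X/\bP^1}+\sum_i(a_i-1)C_i$ --- the sum over the components $C_i$ of multiplicity $a_i$ in the fibres of $g$ --- a direct computation shows that blowing up a point lying on a single fibre component changes the quantity $\mu(\cF)-\tfrac12(-K_X)^2$ by $-\tfrac12$, whereas blowing up a node of a reducible fibre (equivalently, a singular point of $\cF$) changes it by $+\tfrac12$. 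Since $T_{\bF_n/\bP^1}$ is regular with all fibres reduced, this quantity starts at $0$ and must end $\geq0$; and since creating a node requires a previous blow-up on a single component of the same fibre, the blow-ups are forced to come in $(\text{single component},\text{ node})$ pairs, i.e. they are the $2$-blowups of Definition~\ref{def:2blowup}. Finally, after a $2$-blowup over a point $t\in\bP^1$ every component of the fibre over $t$ except the final exceptional curve is a $(-2)$-curve, so no further weak del Pezzo blow-up over $t$ is possible; hence the $2$-blowups sit over distinct points of $\bP^1$, and $K_X^2\geq1$ allows at most three of them. This leaves precisely $\bF_n$ together with the surfaces $X_{n,k}$, $k\in\{1,2,3\}$, of Examples~\ref{eg:one_2blowup}--\ref{eg:three_2blowup}, and in each case $\cF$ is induced by the canonical fibration $X\to\bP^1$.

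The step I expect to be the main obstacle is the blow-up classification: converting the $\pm\tfrac12$ slope bookkeeping into a rigorous argument that simultaneously controls the fibre multiplicities $a_i$ entering $\Delta=\sum_i(a_i-1)C_i$ and the self-intersections of the fibre components, so as to exclude every configuration other than disjoint $2$-blowups --- in particular justifying that a fibre which has been node-blown-up admits no further weak del Pezzo blow-up, and the exclusion of the $\dim Z=0$ case by a slope estimate. A secondary technical point is that the $K_\cF$-MMP on $X$ may pass through Du Val singularities (contracting an $\cF$-invariant $(-2)$-curve, i.e. a node of a reducible fibre), so one must follow the relevant minimal resolutions carefully when transporting the conclusion back to the smooth surface $X$.
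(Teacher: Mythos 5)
Your overall strategy for the Mori-fibre-space dichotomy and for the classification in the case $\dim Z = 1$ is essentially the same as the paper's. Your bookkeeping quantity $\mu(\cF)-\tfrac12(-K_X)^2$ equals $\tfrac12 N_i$ where $N_i:=2K_{\cF_i}\cdot K_{X_i}-K_{X_i}^2$ is what the paper tracks in Lemma~\ref{lem:blowup_N_difference} and Steps 2--5 of the proof in Section~\ref{sec:classification}; the $\pm\tfrac12$ moves are exactly the paper's $\pm1$ moves. One local slip: your assertion that ``after a $2$-blowup over $t$ \ldots no further weak del Pezzo blow-up over $t$ is possible'' is too strong --- a blow-up at a general point of the new $(-1)$-component is still weak del Pezzo by \cite[Proposition 8.1.2]{Dol}; what kills it is that this drops $N$ with no way to recover, which is the paper's Step~4 argument, and your bookkeeping does in fact rule it out, so the error is in the stated reason, not the conclusion.

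The genuine gap is the $\dim Z=0$ case. You write that ``on the relevant Picard-rank-one (Du Val) del Pezzo surfaces $\cF_Y$ is a pencil of lines, and a slope computation shows no iterated blow-up can restore $\mu(\cF)\geq\mu(T_X)$,'' and you flag this as a possible obstacle --- it is in fact the technical heart of the paper (all of Section~\ref{sec:nonexist}). You cannot simply assert that $\cF_Y$ is a pencil of lines: the paper has to prove, using a Property~$(*)$ modification (Theorem~\ref{prop:*}), that $\cF_+$ has exactly one dicritical point which is a smooth point of $Y_+$, that $\cF_+$ is log canonical everywhere and terminal away from that point (Proposition~\ref{prop:dim_Z+=0_imply_lc_fol}), and only then that $(Y_+,\cF_+)$ is a toric foliation with $|\Sing(Y_+)|\leq2$ (Proposition~\ref{prop:dim_Z+=0_imply_toric}, via Shokurov complexity and \cite{BMSZ:Toric}). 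This pins $Y_+$ down to cases 6d, 8c, 9 of the Gorenstein toric del Pezzo list, and only at that point does the $N$-bookkeeping enter, combined with the structure of the blow-up centers relative to the toric fan, to derive a contradiction case by case in Theorem~\ref{prop:dim_Z+=1}. The ``slope computation'' you invoke cannot get off the ground without first identifying the foliation on $Y_+$, because the bookkeeping depends on knowing which blow-up centers are foliation singularities, and that requires knowing $\Sing(\cF_+)$ --- precisely what the toricity argument supplies. So while your plan lands on the right answer and the $\dim Z=1$ branch is sound, the exclusion of $\dim Z=0$ is missing, and it is not a routine estimate.
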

By considering the minimal resolution, we immediately obtain the following. 

\begin{theorem}\label{thm:canwdp} Let $X$ be a weak del Pezzo surface with at worst canonical singularities and a $(-K_X)$-slope unstable tangent sheaf $T_X$. If $\cF$ is the maximal destabilizing sheaf, then $(X,\cF)$ is induced from its minimal resolution, which is one of the surfaces in Theorem \ref{thm:nonsingular-wdP}. 
\end{theorem}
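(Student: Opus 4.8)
The plan is to reduce the statement to the nonsingular case, Theorem~\ref{thm:nonsingular-wdP}, by passing to the minimal resolution $\sigma\colon\widetilde X\to X$. First I would record the basic geometry: since $X$ has at worst canonical, equivalently Du Val, singularities, the minimal resolution is crepant, $K_{\widetilde X}=\sigma^*K_X$. Hence $-K_{\widetilde X}=\sigma^*(-K_X)$ is nef and big and $\widetilde X$ is smooth, so $\widetilde X$ is a nonsingular weak del Pezzo surface to which Theorem~\ref{thm:nonsingular-wdP} applies. I would also observe that the maximal destabilizing sheaf $\cF\subseteq T_X$ has rank one: it is saturated in $T_X$, and a saturated rank-two subsheaf of $T_X$ is $T_X$ itself, which does not destabilize itself. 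Thus $\cF$ is a foliation by curves.

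Next I would set up the transform of foliations under $\sigma$. Since $\sigma$ is a birational morphism, restriction to the open locus where $\sigma$ is an isomorphism, followed by saturation inside the tangent sheaf, gives a bijection between rank-one saturated foliations on $X$ and those on $\widetilde X$; write $\widetilde\cF$ for the transform of $\cF$. Because $\widetilde\cF$ and $\sigma^*\cF$ agree outside $\Exc(\sigma)$, we have $-K_{\widetilde\cF}=\sigma^*(-K_\cF)+E$ for some (possibly non-effective) $\bQ$-divisor $E$ supported on $\Exc(\sigma)$. As $\Exc(\sigma)$ is $\sigma$-contracted, the projection formula gives
\[
-K_{\widetilde\cF}\cdot(-K_{\widetilde X})=-K_{\widetilde\cF}\cdot\sigma^*(-K_X)=(-K_\cF)\cdot(-K_X),\qquad (-K_{\widetilde X})^2=(-K_X)^2 .
\]
Therefore $\mu_{-K_{\widetilde X}}(\widetilde\cF)=\mu_{-K_X}(\cF)$ and $\mu_{-K_{\widetilde X}}(T_{\widetilde X})=\mu_{-K_X}(T_X)$, so a rank-one foliation destabilizes $T_X$ if and only if its transform destabilizes $T_{\widetilde X}$; in particular $T_{\widetilde X}$ is itself $(-K_{\widetilde X})$-slope unstable.

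It remains to match the maximal destabilizing sheaves. Running the slope comparison of the previous paragraph in the reverse direction, every saturated rank-one subsheaf $\cE\subseteq T_{\widetilde X}$ transforms to a foliation on $X$ of the same $\mu$, which is at most $\mu_{-K_X}(\cF)=\mu_{-K_{\widetilde X}}(\widetilde\cF)$; hence $\widetilde\cF$ has maximal slope among such subsheaves, and by uniqueness of the maximal destabilizing sheaf of the rank-two sheaf $T_{\widetilde X}$ it coincides with it. Applying Theorem~\ref{thm:nonsingular-wdP} to $\widetilde X$, we conclude that $\widetilde X$ is one of $\bF_n$ with $n\in\{0,1,2\}$ or $X_{n,1},X_{n,2},X_{n,3}$, and that $\widetilde\cF$ is the foliation induced by the canonical fibration $\widetilde X\to\bP^1$. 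Since $\cF=\sigma_*\widetilde\cF$, the pair $(X,\cF)$ is induced from $(\widetilde X,\widetilde\cF)$, which is exactly the assertion.

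The only point requiring care is the bookkeeping of the foliation transform under $\sigma$, namely that pullback-and-saturate is a slope-preserving bijection on rank-one foliations. But because $\sigma$ is crepant and $\Exc(\sigma)$ is $\sigma$-contracted, the exceptional correction term $E$ is annihilated by $\sigma^*(-K_X)$ in every relevant intersection number, so no computation of foliated discrepancies is actually needed and the reduction is essentially formal; all the geometric content sits in Theorem~\ref{thm:nonsingular-wdP}.
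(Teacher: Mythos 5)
Your proof is correct and fills in exactly the routine reduction the paper leaves implicit ("by considering the minimal resolution, we immediately obtain..."): since $\sigma$ is crepant and $\Exc(\sigma)$ is $\sigma$-contracted, the transform of rank-one saturated subsheaves preserves slope with respect to $-K$, so the maximal destabilizing sheaf of $T_X$ transforms to that of $T_{\widetilde X}$ and Theorem~\ref{thm:nonsingular-wdP} applies. This is the same approach the paper takes.
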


As another consequence, we observe that, as in the nonsingular del Pezzo case, the tangent bundle is never $(-K_X)$-slope unstable.
\begin{corollary}\label{cor:semistable} For $X$ a weak del Pezzo surface with canonical singularities, the tangent sheaf $T_X$ is $(-K_X)$-slope semistable. 
\end{corollary}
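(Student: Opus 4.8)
The plan is to read off the corollary from Theorem \ref{thm:canwdp} together with the explicit slope computations already recorded in Examples \ref{eg:one_2blowup}--\ref{eg:three_2blowup}. I would argue by contradiction: suppose $T_X$ is \emph{not} $(-K_X)$-slope semistable, so that its Harder--Narasimhan maximal destabilizing subsheaf $\cF\subseteq T_X$ satisfies $\mu(\cF)=\mu_{\max}(T_X)>\mu(T_X)$. In particular $T_X$ is $(-K_X)$-slope unstable, so Theorem \ref{thm:canwdp} applies: the pair $(X,\cF)$ is induced from the minimal resolution $\pi\colon\widetilde X\to X$, where $\widetilde X$ is one of the surfaces in Theorem \ref{thm:nonsingular-wdP} and $\cF$ is induced along $\pi$ from the foliation $\cF_{\widetilde X}$ associated with the canonical fibration $\widetilde X\to\bP^1$.

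Next I would pin down the slope of the destabilizer on the resolution. Since $X$ has at worst canonical singularities, $\pi$ is crepant, so $-K_{\widetilde X}=\pi^*(-K_X)$; hence $(-K_{\widetilde X})^2=(-K_X)^2$, giving $\mu(T_{\widetilde X})=\mu(T_X)$, and the projection formula yields $\mu_X(\cF)=\mu_{\widetilde X}(\cF_{\widetilde X})$ because $\cF$ is induced from $\cF_{\widetilde X}$ along $\pi$. It then suffices to verify $\mu(\cF_{\widetilde X})=\mu(T_{\widetilde X})$ for each surface in Theorem \ref{thm:nonsingular-wdP}. For $\widetilde X=X_{n,i}$ with $i\in\{1,2,3\}$ this is precisely what is recorded in Examples \ref{eg:one_2blowup}, \ref{eg:two_2blowup}, \ref{eg:three_2blowup}: the values $3,2,1$ there equal $(-K_{X_{n,i}})^2/2=\mu(T_{X_{n,i}})$. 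For $\widetilde X=\bF_n$ with $n\in\{0,1,2\}$ it is a one-line computation in $\Pic(\bF_n)=\bZ C_0\oplus\bZ f$ (negative section and fiber): from $-K_{\bF_n}=2C_0+(n+2)f$ and $c_1(T_{\bF_n/\bP^1})=-K_{\bF_n}-\pi^*c_1(T_{\bP^1})=2C_0+nf$ one gets $c_1(T_{\bF_n/\bP^1})\cdot(-K_{\bF_n})=4=(-K_{\bF_n})^2/2=\mu(T_{\bF_n})$, and here $\cF_{\bF_n}=T_{\bF_n/\bP^1}$ since the fibers are reduced.

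Combining these gives $\mu(\cF)=\mu_X(\cF)=\mu_{\widetilde X}(\cF_{\widetilde X})=\mu(T_{\widetilde X})=\mu(T_X)$, contradicting $\mu(\cF)>\mu(T_X)$. Since $\cF$ realizes $\mu_{\max}(T_X)$, this shows every subsheaf of $T_X$ has slope at most $\mu(T_X)$, i.e. $T_X$ is $(-K_X)$-slope semistable. The only point requiring care, and the place I would be most cautious, is the equality $\mu_X(\cF)=\mu_{\widetilde X}(\cF_{\widetilde X})$: one must check that ``$\cF$ is induced from its minimal resolution'' in Theorem \ref{thm:canwdp} means $\cF$ is the saturation in $T_X$ of $\pi_*\cF_{\widetilde X}$, so that $c_1(\cF)$ and $\pi^*\!\big(\,$the corresponding class on $X\,\big)$ differ only by $\pi$-exceptional divisors, which are orthogonal to $\pi^*(-K_X)=-K_{\widetilde X}$; granting that, the projection formula closes the argument. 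Everything else is bookkeeping, the substantive classification having been carried out in Theorems \ref{thm:nonsingular-wdP} and \ref{thm:canwdp}.
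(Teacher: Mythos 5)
Your proof is correct and is essentially the argument the paper leaves implicit (the corollary is stated as an immediate consequence of Theorem \ref{thm:canwdp} with no separate proof given): if $T_X$ were not semistable the maximal destabilizer $\cF$ would have $\mu(\cF)>\mu(T_X)$, but Theorem \ref{thm:canwdp} together with the slope equalities $\mu(\cF_{n,i})=\mu(T_{X_{n,i}})$ recorded in Examples \ref{eg:one_2blowup}--\ref{eg:three_2blowup} (and the analogous computation on $\bF_n$) forces $\mu(\cF)=\mu(T_X)$, a contradiction. The point you flag at the end is handled exactly as you say: $\pi$ is crepant, the exceptional curves are orthogonal to $\pi^*(-K_X)=-K_{\widetilde X}$, and $c_1(\pi^{-1}\cF)$ differs from $\pi^*c_1(\cF)$ only by exceptional classes, so slopes are preserved.
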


By investigating possible configurations of $(-2)$-curves on the minimal resolution of a canonical weak del Pezzo surface, we have a characterization of possible singularities on canonical del Pezzo surfaces with an unstable tangent sheaf. 
\begin{corollary}\label{cor:candp} If $X$ is an $(-K_X)$-slope unstable del Pezzo surface with canonical singularities, then singularities of $X$ can only be one of the following types:
    \begin{enumerate}[$(1)$]
        \item $mA_1$ where $m=2$, $4$, or $6$,
        \item $A_1+A_2$,
        \item $2A_1+A_3$, and 
        \item $3A_1+D_4$.
    \end{enumerate}
\end{corollary}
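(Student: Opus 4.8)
The plan is to deduce everything from Theorem \ref{thm:canwdp}, which presents the minimal resolution $\nu\colon\tilde X\to X$ as one of the surfaces of Theorem \ref{thm:nonsingular-wdP}: namely $\bF_n$ with $n\in\{0,1,2\}$, or one of the $2$-blowups $X_{n,1},X_{n,2},X_{n,3}$. Since $X$ has canonical singularities, $\nu$ is crepant, so $X$ is exactly the anticanonical model of $\tilde X$, i.e.\ the contraction of all $(-2)$-curves; hence $\Sing X$ is read off from the dual graph of the $(-2)$-curves on $\tilde X$, each connected component of Dynkin type $A_k$, $D_k$, or $E_k$ contributing the corresponding Du Val point. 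Thus the statement reduces to enumerating the possible configurations of $(-2)$-curves on $\bF_n$ and on $X_{n,1},X_{n,2},X_{n,3}$, as the centers $p_1,p_2,p_3$ range over the admissible positions of Examples \ref{eg:one_2blowup}--\ref{eg:three_2blowup}.

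The key is to organize these $(-2)$-curves by the canonical fibration $\pi\colon X_{n,i}\to\bP^1$ of Theorem \ref{thm:nonsingular-wdP}. They split into \emph{vertical} ones (contained in fibers) and \emph{horizontal} ones (sections of $\pi$). By Definition \ref{def:2blowup}, the fiber of $\pi$ over $\pi(p_j)$ is a chain $\bar F_j+E^{(j)}+2E'^{(j)}$ whose two end components $\bar F_j,E^{(j)}$ are disjoint $(-2)$-curves and whose middle component $E'^{(j)}$ is a $(-1)$-curve, while all other fibers are irreducible; so each $2$-blowup contributes exactly a disjoint pair (a $2A_1$) of vertical $(-2)$-curves, and nothing else vertical occurs. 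A horizontal $(-2)$-curve must be a section $H$ (one checks there are no bisection $(-2)$-curves on $\bF_n$ for $n\le 2$, and the $2$-blowups create none); for $n\in\{0,1\}$ none is present a priori, for $n=2$ the negative section $\sigma$ of $\bF_2$ is one, and any further one arises as the strict transform of a $(-1)$-curve through some $p_j$ whose self-intersection has dropped to $-2$ (for $n=1$ this $(-1)$-curve can be the negative section of $\bF_1$). Since $H\cdot(\text{fiber})=1$ while the coefficient of $E'^{(j)}$ in the fiber is even, $H$ meets each special fiber in exactly one of its two $(-2)$-components and meets no other $(-2)$-curve; in the dual graph $H$ is therefore a vertex whose valence equals the number of special fibers, all of its neighbours being leaves.

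With this picture the enumeration is essentially mechanical once one adds the input that the $(-2)$-curves of a weak del Pezzo surface of degree $d$ span a negative definite root lattice of rank $\le 9-d$, with $d=K_{\tilde X}^2\in\{8,6,4,2\}$ here. For $\bF_0,\bF_1$ there are no $(-2)$-curves, while $\bF_2$ contributes a single $A_1$. For $X_{n,1}$ (rank $\le 3$): a $2A_1$ from the one $2$-blowup, upgraded to $A_1+A_2$ precisely when a horizontal $(-2)$-section is present (forced for $n=2$, or achieved on $\bF_1$ by taking $p_1$ on the negative section). For $X_{n,2}$ (rank $\le 5$): $4A_1$ in general, and $2A_1+A_3$ when a horizontal section threads the two special fibers --- the rank bound already forbids a second such section. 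For $X_{n,3}$ (rank $\le 7$): $6A_1$ in general, and $3A_1+D_4$ when a horizontal section (valence three) threads all three special fibers, the rank bound again leaving no room for additional $(-2)$-curves. These are exactly the configurations $(1)$--$(4)$. Instability needs no separate argument: the contracted curves are $(-K_{\tilde X})$-trivial, so $\mu(T_X)=\mu(T_{\tilde X})$ and the destabilizing foliation of Theorem \ref{thm:nonsingular-wdP} descends with unchanged slope.

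The main difficulty is making the previous paragraph genuinely exhaustive. One must verify that the only horizontal $(-2)$-curves occurring are sections, and that at most one appears in each relevant range, so that no $D_5$ or $E_k$ can be created; this comes down to controlling which $(-1)$- and $(-2)$-curves of $X_{n,1}$ and $X_{n,2}$ can pass through an admissible $p_j$ and its infinitely near point, where the rigidity of the $(-1)$/$(-2)$-curve configuration on a low-degree weak del Pezzo surface together with the forbidden-fiber constraints does the real work. One must likewise confirm the parity and stray-component patterns claimed --- for instance that configurations such as $3A_1+A_2$, $A_4$, or $2A_1+A_2$ are not realizable on the corresponding $X_{n,i}$ --- which again follows from the negative-definite rank bound for the appropriate degree.
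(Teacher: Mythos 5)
Your overall strategy --- pass to the minimal resolution via Theorem~\ref{thm:canwdp}, observe that $X$ is the anticanonical model of $\tilde X$, and read $\Sing(X)$ off the configuration of $(-2)$-curves on $\tilde X$, organized by the fibration $\tilde X\to\bP^1$ --- is exactly the strategy the authors indicate (``By investigating possible configurations of $(-2)$-curves on the minimal resolution\dots''). The structural observation that each $2$-blowup produces a reduced singular fiber $\bar F+E'+2E''$ whose two $(-2)$-components are disjoint, with any horizontal $(-2)$-curve necessarily a section meeting exactly one of them, is the right picture and does drive the enumeration.

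However, there are genuine gaps. First, your own enumeration explicitly produces a case not on the list: you write that $\bF_2$ contributes a single $A_1$, and indeed its anticanonical model is $\bP(1,1,2)$ (degree $8$, Picard rank one, one $A_1$ point), which is a del Pezzo surface with canonical singularities; yet a single $A_1$ is not among $(1)$--$(4)$. You then conclude ``These are exactly the configurations $(1)$--$(4)$,'' which contradicts your own enumeration. To prove the corollary as stated you must either show $T_{\bP(1,1,2)}$ is $(-K)$-slope stable (so that this case never arises) or explain why it is otherwise excluded; you do neither, and the step is not routine --- the pushforward of $T_{\bF_2/\bP^1}$ gives a rank-one reflexive subsheaf $\cO_X(2L)\subseteq T_{\bP(1,1,2)}$ of slope equal to $\mu(T_X)$, so the exclusion is delicate and needs an argument.

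Second, the parenthetical assertion that ``there are no bisection $(-2)$-curves on $\bF_n$ for $n\le2$, and the $2$-blowups create none'' is the linchpin of the claim that every horizontal $(-2)$-curve is a section of valence equal to the number of special fibers, but you neither prove it nor is it obviously true: after several $2$-blowups, classes such as a $(2,1)$-curve on $\bF_0$ passing through the centers and tangent to the corresponding fibers have the right numerics ($C^2=-2$, $K\cdot C=0$), and ruling them out requires a geometric argument (for example, a Riemann--Hurwitz count of tangency fibers of a bisection, or showing that imposing the required tangencies forces $-K_{\tilde X}$ to fail nefness). Likewise, the appeal to the rank bound $\le 9-d$ does not by itself exclude $A_4$, $D_5$, $E_7$, $7A_1$, or similar lattices of the right rank; what excludes them is the combinatorics you set up (disjoint vertical pairs plus at most one section), but you acknowledge in your last paragraph that this exhaustiveness ``is the main difficulty'' and defer it. As written, the argument is a correct outline but not a complete proof.
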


This result reveals an interesting new phenomenon: For Fano manifolds, $K$-polystability of ${\displaystyle (X,-K_{X})}$ implies $(-K_X)$-slope polystabililty of $T_X$, by the Kobayashi-Hitchin correspondence. For singular Fano varieties, Odaka et al. \cite{OSS:Compact} have shown that a del Pezzo surface with canonical singularities of type $4A_1$ or $6A_1$ admits a weak KE metric. However, Corollary \ref{cor:candp} implies that the implication of $K$-polystable to $(-K_X)$-polystable can fail in the theory of weak KE metric on singular Fano varieties.


\begin{corollary}\label{cor:wKE} There exists a del Pezzo surface $X$, with $\Sing(X)=4A_1$ or $6A_1$, admitting a weak KE metric but whose tangent sheaf is slope unstable. (See Figure~\ref{fig:6A_1} for a visualization of the case when $\Sing(X)=6A_1$.)
\end{corollary}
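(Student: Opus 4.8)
The plan is to realize del Pezzo surfaces with the prescribed singularities as anticanonical models of the surfaces of Examples~\ref{eg:two_2blowup} and \ref{eg:three_2blowup}, and then to quote \cite{OSS:Compact} for the metric. First I would specialize to $n=1$ and make a sufficiently general choice of the blow-up centers in those examples: the centers should lie on pairwise distinct fibers of $\bF_1\to\bP^1$ and avoid the negative section, so that $X_{1,2}$ (resp.\ $X_{1,3}$) is a nonsingular weak del Pezzo surface of degree $4$ (resp.\ $2$). The next step is to enumerate its $(-2)$-curves. Writing $\bar f_i$ for the strict transform of the fiber through $p_i$ and $E^{(i)}$ for the strict transform of the first exceptional curve of the $i$-th $2$-blowup, each $\bar f_i$ and each $E^{(i)}$ is a $(-2)$-curve, and a short intersection computation shows that the resulting $4$ (resp.\ $6$) curves are pairwise disjoint. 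Generality of the centers should rule out every other effective $(-2)$-class---lines through three of the blown-up points of $\bP^2$, and, for $X_{1,3}$, also conics through six of them, taking the infinitely near constraints into account---so these are all the $(-2)$-curves. Contracting them yields a del Pezzo surface $X$ with $-K_X$ ample and $\Sing(X)=4A_1$ (resp.\ $6A_1$); see Figure~\ref{fig:6A_1} for the latter.

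To see that $T_X$ is slope unstable, observe that the destabilizing foliation $\cF_{1,2}$ (resp.\ $\cF_{1,3}$) of Example~\ref{eg:two_2blowup} (resp.\ \ref{eg:three_2blowup}) leaves every contracted $(-2)$-curve invariant, since each such curve lies in a fiber of the canonical fibration $X_{1,k}\to\bP^1$. Hence the foliation descends to a rank-one subsheaf $\cF\subseteq T_X$, and since the minimal resolution $X_{1,k}\to X$ is crepant, the projection formula gives $\mu(\cF)=\mu(\cF_{1,k})=\mu(T_{X_{1,k}})=\mu(T_X)$. Thus $T_X$ fails to be $(-K_X)$-slope stable, that is, it is slope unstable---consistently with Corollary~\ref{cor:semistable}, which forces $T_X$ to be strictly semistable. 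Finally, the point configurations on $\bP^2$ underlying the two surfaces above are unique up to $\mathrm{PGL}_3$, so $X$ is the del Pezzo surface with canonical singularities of type $4A_1$ (resp.\ $6A_1$) treated in \cite{OSS:Compact}; by that reference $X$ is K-polystable and carries a weak K\"ahler--Einstein metric. Together with the previous paragraph this proves the corollary.

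The main obstacle is the complete enumeration of $(-2)$-curves: one must verify that a general choice of (possibly infinitely near) base points produces no $(-2)$-curve beyond those forced by the $2$-blowup construction, and that the disjoint configuration obtained is precisely $4A_1$ or $6A_1$ rather than a larger ADE type. This is routine but a little delicate, amounting to excluding effective roots in the lattice $E_{9-d}$ attached to a weak del Pezzo surface of degree $d$. A lesser point is the identification of the two contractions with the surfaces in \cite{OSS:Compact}, for which the projective rigidity of the configurations suffices.
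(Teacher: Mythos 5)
Your proposal is correct and follows essentially the same route as the paper, which obtains these surfaces by contracting all $(-2)$-curves of the surfaces in Examples~\ref{eg:two_2blowup} and~\ref{eg:three_2blowup}, descends the destabilizing foliation through the crepant resolution (as packaged in Theorem~\ref{thm:canwdp}), and cites \cite{OSS:Compact} for the weak KE metric. Note that your assignment ($X_{1,2}$ of degree $4$ for $4A_1$, $X_{1,3}$ of degree $2$ for $6A_1$) is the right one: the caption of Figure~\ref{fig:6A_1} refers to Example~\ref{eg:two_2blowup}, but the configuration drawn there (three fibers, six nodes) is that of Example~\ref{eg:three_2blowup}, and a degree-$4$ surface could not carry $6A_1$ in any case.
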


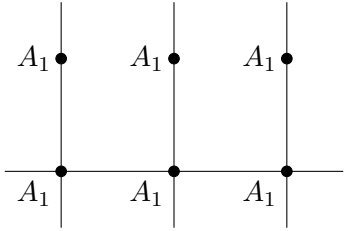
\begin{figure}
    \centering
    \begin{tikzpicture}
        \def\r{1.5}
        \draw (-0.5*\r,0) -- (2.5*\r,0);
        \draw (0,-0.5*\r) -- (0,1.5*\r);
        \draw (\r,-0.5*\r) -- (\r,1.5*\r);
        \draw (2*\r,-0.5*\r) -- (2*\r,1.5*\r);
        \filldraw[black] (0,0) circle (2pt) node[anchor=north east] {$A_1$};
        \filldraw[black] (0,\r) circle (2pt) node[anchor=east] {$A_1$};
        \filldraw[black] (\r,0) circle (2pt) node[anchor=north east] {$A_1$};
        \filldraw[black] (\r,\r) circle (2pt) node[anchor=east] {$A_1$};
        \filldraw[black] (2*\r,0) circle (2pt) node[anchor=north east] {$A_1$};
        \filldraw[black] (2*\r,\r) circle (2pt) node[anchor=east] {$A_1$};
    \end{tikzpicture}
    \caption{A visualization of a singular del Pezzo surface $X$ with $\operatorname{Sing}(X)=6A_1$ admitting a weak KE metric but whose tangent sheaf is slope unstable. This is obtained by contracting all $(-2)$-curves of Example~\ref{eg:two_2blowup} when $n=1$.}\label{fig:6A_1}
\end{figure}

\subsubsection*{Notations} For ease of readability, we use $p_+\in Y_\pm$, $W_\pm$, $B_\pm$, $F_\pm$, and $\cF_\pm$ to indicate constructions associated with $\pm K_\cF,$ even though $-K_\cF$ does not appear in this work.

\subsection{Sketch of the proof of Theorem \ref{thm:nonsingular-wdP}} As indicated before, let $\varphi:X\dra Y_+$ be a $K_\cF$-MMP on the Mori dream space $X$ that terminates at a Mori fiber space $f:Y_+\ra Z_+$. If $\dim Z_+=1$, then $Z_+=\bP^1$ and it is not hard to explicitly classify $X$ when $X$ is nonsingular by considering the minimal resolution of $Y_+$. The central technical part of the whole proof is to rule out the case $\dim Z_+=0,$ for which $Y_+$ is an unstable del Pezzo surface of Picard rank one with canonical singularities. 

If $\rho(Y_+)=1$, it is known that $\cF_+$ has a foliated non-klt center \cite{AD:Ff}. Our first observation is that in this setting, $(Y_+,\cF_+)$ indeed is toric with a unique dicritical singularity supported at a smooth point: To understand the geometry of $(Y_+,\cF_+)$, we consider a Property $(*)$ modification $W_+\rightarrow\bP^1$ \cite{ACSS:PosM} to analyze the singularities of $\cF_+$. We then show that $\cF_+$ has a unique dicritical singular point $p_+\in Y_+$, $p_+$ is nonsingular, and $\cF_+$ has foliated log canonical singularities by utilizing a delicate analysis of foliated surface singularities \cite{Ch:lcfsing} and a foliated contraction theorem for terminal foliations \cite{CS:fMMPr1}. For proving that $(Y_+,\cF_+)$ is a toric foliation, though an elementary approach exists, we exhibit an argument using a characterization of toric varieties via Shokurov's complexity \cite{BMSZ:Toric}. Finally, a case-by-case study of possible toric del Pezzo surfaces disproves the $\dim Z_+=0$ scenario. 

\subsection{Further discussion} As discussed in this work, the theory of foliated birational geometry offers a framework for understanding the slope instability of Fano manifolds. Our approach, which utilizes a foliation-sensitive birational modification, suggests a potential way to tackle Problem \ref{prob:main}. Though a generalization of Theorem \ref{thm:nonsingular-wdP} in dimension three is already intricate due to the appearance of singularities and the complexity of geometry. 

On the other hand, we propose the following seemingly more approachable problem, provided the technicalities of foliated geometry can be extended to higher dimensions. 

\begin{problem} For a Fano manifold with a $(-K)$-slope unstable tangent bundle, its maximal destabilizing sheaf of the tangent bundle has rank at least two. 
\end{problem}

Note that this problem has a positive answer when the Picard number is one (cf. \cite[Proof of Proposition 1]{H:StF}), and is compatible with the work of \cite{PW:stb2=1, H:StF}, the examples in \cite{K:Fst}, and Corollary \ref{cor:semistable}.

\subsubsection*{Structure of the paper} This paper is organized as the following: In Section \ref{sec:pre}, we review foliated singularities and toric foliations. In Section \ref{sec:fMMP}, we prove some general properties for the foliation induced by the maximal destabilizing sheaf and show a foliated minimal model program ends with a Mori fiber space $Y_+\to Z_+$. In Section \ref{sec:classification}, we classify nonsingular del Pezzo surfaces with unstable tangent bundles when $\dim Z_+=1$. In Section \ref{sec:nonexist}, we rule out the case $\dim Z=0$ and hence finish the proof of Theorem \ref{thm:nonsingular-wdP}.

\subsection*{Acknowledgements}  
We want to express our gratitude to Paolo Cascini, Hsin-Ku Chen, Jungkai Alfred Chen, Calum Spicer, Sebasti\'an Velazquez, and Kento Fujita for their valuable discussions. The first author extends their appreciation to National Taiwan University for providing an excellent research environment where part of this work was conducted. The second author thanks Paolo Cascini and Imperial College London for their hospitality during a research visit that contributed to this work. This project is supported by grants MOST-111-2115-M-006-002-MY2 and NSTC-113-2115-M-006-015-MY3 from the National Science and Technology Council in Taiwan.

\section{Preliminaries}\label{sec:pre}
\subsection{Foliations} We recall the notions of foliation, foliated singularities, a classification of foliated log canonical surface singularities, and toric foliation.  

\begin{definition}
    A \emph{foliation} $\cF$ on a normal variety $X$ is a subsheaf of the tangent sheaf $T_X$ such that 
    \begin{itemize}
        \item $\cF$ is saturated, that is, $T_X/\cF$ is torsion-free, and
        \item $\cF$ is closed under the Lie bracket. 
    \end{itemize}
\end{definition}
If $\cF=0$, then we call $\cF$ is the foliation by points. 

Let $f: Y\dashrightarrow X$ be a dominant rational map and $\cF$ be a foliation. 
We denote the \emph{pullback} of $\cF$ via $f$ by $f^{-1}\cF$ as constructed in \cite[Section 3.2]{D:C1I}. If $f$ is birational and $\cG$ is a foliation on $Y$, then we define the \emph{pushforward} of $\cG$ via $f$ as $(f^{-1})^{-1}\cG$, denoted by $f_*\cG$. 

A foliation $\cF$ on $X$ is called \emph{algebraically integrable} if $\cF$ is the pullback foliation $\varphi^{-1}\cF_Z$ of a dominant rational map $\varphi: X\dashrightarrow Z$ and $\cF_Z$ is the foliation by points on $Z$. We also say $\cF$ is induced by $\varphi$. 

If $\cF$ is a foliation of rank $r$, then we define the \emph{canonical divisor} of $\cF$ to be a Weil divisor $K_\cF$ such that $\det\cF=\cO_X(-K_\cF)$. 
This induces a morphism $\phi:(\wedge^r\Omega_X\otimes\cO_X(-K_\cF))^{\vee\vee}\to\cO_X$ and we define the \emph{singular locus} of $\cF$, denoted by $\operatorname{Sing}(\cF)$, as the co-support of the image of $\phi$. 

A subvariety $S\subseteq X$ is called \emph{$\cF$-invariant} if $\partial\cI_{S\cap U}\subseteq\cI_{S\cap U}$ for any open subset $U\subseteq X$ and any section $\partial\in H^0(U,\cF)$, where $\cI_{S\cap U}$ is the ideal sheaf of $S\cap U$ in $U$. 

\begin{definition}
    Let $\cF$ be a foliation on a normal variety $X$. 
    Let $f:Y\to X$ be a birational morphism and $E$ be a divisor on $Y$. 
    We define the discrepancy of $E$ with respect to $\cF$ to be $a(E,\cF)=\operatorname{mult}_E(K_{f^{-1}\cF}-f^*K_\cF)$. 
    We say $\cF$ is terminal (resp. canonical; resp. log canonical) if $a(E,\cF)>0$ (resp. $\geq 0$; resp. $\geq -\epsilon(E)$) for all prime divisors $E$ over $X$, where $\epsilon(E)=0$ if $E$ is $\cF$-invariant and $1$ otherwise. 
\end{definition}

\subsection{Log canonical foliated surface singularities}
In this subsection, we fix some notations from \cite{B:BGf} and recall the classification of log canonical foliated surfaces singularities in \cite{Ch:lcfsing}, which is an extension of Alekseev's classical arithmetic classification of log canonical surface singularities. 

Let $\cF$ be a rank one foliation on a normal surface $X$ and $p\in\operatorname{Sing}(\cF)\setminus\operatorname{Sing}(X)$. 
Since $p\notin\operatorname{Sing}(X)$, we let $v$ be a vector field around $p$ generating $\cF$. 
By definition  $p\in\operatorname{Sing}(\cF)$ if and only if $v(p)=0$ and we consider eigenvalues $\lambda_1$, $\lambda_2$ of the linear part $(\operatorname{D}v)\vert_p$, which do not depend on the choice of $v$. 
If one of the eigenvalues is non-zero, say $\lambda_2$, then define the \emph{eigenvalue of $\cF$ at $p$} to be $\lambda:=\frac{\lambda_1}{\lambda_2}$. 
For $\lambda\neq 0$, it is well-defined up to reciprocal $\lambda\sim\frac{1}{\lambda}$. 

A singular point $p\in X$ is called \emph{non-degenerate} if $\lambda\neq 0$ and  \emph{reduced} if $\lambda\notin\bQ_+$, respectively. 

\begin{definition}
    Let $\cF$ be a foliation on a normal surface and $C$ be an $\cF$-invariant curve. 
    Let $p\notin\operatorname{Sing}(X)$, $\omega$ be a $1$-form defining $\cF$ around $p$, and $f$ be the local defining function of $C$ around $p$.   
    Then we can write $g\omega = h\textnormal{d}f+f\eta$ for some holomorphic functions $g$, $h$ and holomorphic $1$-form $\eta$ such that $h$ and $f$ are relatively prime. 
    
    We define the index $\operatorname{Z}(\cF,C,p)$ to be the vanishing order of $\frac{h}{g}\vert_C$ at $p$. 
    This definition is independent of $f$, $g$, $h$, $\omega$, and $\eta$, cf. \cite[Chapter 2]{B:BGf}.
\end{definition}

Note that $\operatorname{Z}(\cF,C,p) = 0$ if $p\notin\operatorname{Sing}(\cF)\cup\operatorname{Sing}(X)$. 
Thus, if $C$ is compact and contained in the nonsingular locus of $X$, then we can define 
\[\operatorname{Z}(\cF,C):=\sum_{p\in C}\operatorname{Z}(\cF,C,p).\]

\begin{definition}
    Let $\cF$ be a foliation on a normal surface $X$. 
    We call a birational morphism $\pi: Y \to X$ is a \emph{resolution} of the foliation $\cF$ if $\pi^{-1}\cF$ has only reduced singularities. 

    A resolution $\pi: Y\to X$ of the foliation $\cF$ is \emph{minimal} if any resolution $\phi:Z\to X$ of $\cF$ factors through $\pi$. 
\end{definition}
\begin{remark}
    Let $\cF$ be a foliation on a normal surface. 
    A resolution of $\cF$ always exists by Seidenberg's theorem. 
    Also, there is a unique minimal resolution of $\cF$ up to isomorphism by \cite[Proposition 2.20]{Ch:lcfsing}. 
\end{remark}

\begin{definition}
    A compact curve $C=\bigcup_{i=1}^s C_i$ is called a \emph{string} if all $C_i$'s are nonsingular rational irreducible curves and $C_i\cdot C_j = 1$ if $|i-j|=1$ and $0$ if $|i-j|\geq 2$. 
    If moreover $C_i^2\leq -2$ for all $i$, then $C$ is called a \emph{Hirzebruch-Jung string}.
\end{definition}
\begin{definition}
    Let $\cF$ be a foliation on a nonsingular surface $X$. 
    \begin{enumerate}[$(1)$]
        \item An irreducible curve $C$ is called a $(-1)$-$\cF$-curve (resp. $(-2)$-$\cF$-curve) if 
        \begin{enumerate}[$(a)$]
            \item $C$ is a nonsingular rational $\cF$-invariant curve and 
            \item $\operatorname{Z}(\cF,C)=1$ (resp. $\operatorname{Z}(\cF,C)=2$). 
        \end{enumerate}
        \item We say a Hirzebruch-Jung string $C=\bigcup_{i=1}^sC_i$ is an $\cF$-chain if 
        \begin{enumerate}[$(a)$]
            \item each $C_i$ is $\cF$-invariant, 
            \item $\operatorname{Sing}(\cF)\cap C$ are all reduced and non-degenerate, and
            \item $\operatorname{Z}(\cF,C_1)=1$ and $\operatorname{Z}(\cF,C_i)=2$ for $i\geq 2$. 
        \end{enumerate}
        \item If an irreducible $\cF$-invariant curve $E$ meeting an $\cF$-chain $C$ but not contained in $C$, then $E$ is called a \emph{tail} of $C$. 
        Note that such $E$ is unique if it exists. 
        \item $E$ is called a bad tail if 
        \begin{enumerate}[$(a)$]
            \item $E$ is a nonsingular rational irreducible $\cF$-invariant curve with $\operatorname{Z}(\cF,E)=3$ and $E^2\leq -2$, and 
            \item $E$ intersects two $(-1)$-$\cF$-curves whose self-intersections are $-2$. 
        \end{enumerate}

        \item An elliptic Gorenstein leaf is either an $\cF$-invariant rational curve with only one node or a cycle of $(-2)$-$\cF$-curves. 
    \end{enumerate}
\end{definition}

We restate the classification of log canonical foliated surface singularities \cite{Ch:lcfsing} into two parts, canonical and non-canonical, for ease of later use. 
\begin{theorem}\label{can_sing}
    Let $\cF$ be a foliation with canonical singularities on a normal surface $X$ and $\pi: Y\to X$ be the minimal resolution of the $\cF$. Let $\cG:=\pi^{-1}\cF$ be the pullback foliation. 
    Then, the connected components of exceptional divisors of $\pi$ belong to one of the following types:
    \begin{enumerate}[$(1)$]
        \item A $\cG$-chain $\bigcup_{i=1}^sC_i$ with $a(C_s,\cF)=\frac{1}{r}$ where $r$ is the index of $K_\cF$ at the point $\pi(\bigcup_{i=1}^sC_i)$. 
        \item A chain of three invariant curves $E_1\cup E_2\cup E_3$ with $a(E_1,\cF)=a(E_3,\cF)=\frac{1}{2}$ and $a(E_2,\cF)=0$, where $E_1$ and $E_3$ are $(-1)$-$\cG$-curves with self-intersection $-2$ and $E_2$ is a bad tail. 
        \item A chain of $(-2)$-$\cG$-curves $\bigcup_{i=1}^sC_i$ with $a(C_i,\cF)=0$ for all $i$.
        \item A curve whose dual graph is of $D$ type. More precisely, two $(-1)$-$\cG$-curves $C_1$, $C_2$ whose self-intersections are $-2$ joined by a bad tail $C_3$, which itself connects to a chain of $(-2)$-$\cG$-curves $\bigcup_{i=4}^sC_i$. Here $a(C_1,\cF)=a(C_2,\cF)=\frac{1}{2}$ and $a(C_i,\cF)=0$ for all $i\geq 3$. 
        \item An elliptic Gorenstein leaf $\bigcup_{i=1}^s C_i$ with $a(C_i,\cF)=0$ for all $i$.
    \end{enumerate}
\end{theorem}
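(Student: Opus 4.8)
The plan is to deduce Theorem \ref{can_sing} from the full classification of log canonical foliated surface singularities in \cite{Ch:lcfsing} by isolating the canonical case. Since a canonical foliation is in particular log canonical, the minimal resolution $\pi\colon Y\to X$ of $\cF$ (which exists by Seidenberg's theorem and is unique) and the connected components of its exceptional locus are already described by the dual-graph classification of \cite{Ch:lcfsing}; what must be extracted is which of those configurations survive the stronger discrepancy bound. The key observation is that the canonical bound $a(E,\cF)\geq 0$ differs from the log canonical bound $a(E,\cF)\geq-\epsilon(E)$ only for exceptional divisors $E$ that are \emph{not} $\cF$-invariant: if $E$ is $\cF$-invariant then $\epsilon(E)=0$ and the log canonical estimate already gives $a(E,\cF)\geq 0$. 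So, after reducing the problem to computing discrepancies of the exceptional components of $\pi$, the task is to discard exactly those entries of the log canonical list that force a non-$\cF$-invariant exceptional component of negative discrepancy.

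First I would record that a canonical foliated surface singularity is non-dicritical: this is standard and already implicit in \cite{B:BGf, Ch:lcfsing}, since at a dicritical singularity the minimal resolution of $\cF$ extracts a non-$\cF$-invariant curve whose $\cF$-discrepancy is negative, contradicting the canonical hypothesis. This eliminates the dicritical items from the classification. Next, among the remaining (non-dicritical) log canonical configurations, I would go through the list of \cite{Ch:lcfsing} and remove each one whose minimal resolution contains a non-$\cF$-invariant exceptional component, as each such component appears there with discrepancy $-1<0$. After these two passes, every exceptional component of $\pi$ is $\cF$-invariant, so $\epsilon\equiv 0$ on the exceptional locus, and the surviving dual graphs are precisely those of types $(1)$--$(5)$: an $\cF$-chain with or without the appropriate tail, the bad-tail chain of length three, the $D$-type with a bad tail, the chain of $(-2)$-$\cG$-curves, and the elliptic Gorenstein leaf.

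It then remains to read off the discrepancy values. Along an $\cF$-chain $\bigcup_{i=1}^sC_i$, a Hirzebruch--Jung string of $\cF$-invariant curves with $\operatorname{Z}(\cF,C_1)=1$ and $\operatorname{Z}(\cF,C_i)=2$ for $i\geq2$, the $\cF$-discrepancies are pinned down by the continued-fraction recursion governing the Hirzebruch--Jung resolution combined with the foliated adjunction formula of \cite{B:BGf, Ch:lcfsing}; running this recursion yields $a(C_s,\cF)=\frac1r$ with $r$ the index of $K_\cF$ at the image point, giving $(1)$. For $(2)$ and $(4)$ one applies the same local computation at the junction with the bad tail, using $\operatorname{Z}(\cF,E)=3$ and $E^2\leq-2$ to see that the two end curves have discrepancy $\frac12$ and all other components discrepancy $0$. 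For $(3)$ and $(5)$ all components are $\cF$-invariant with $K_\cG=\pi^*K_\cF$ supported along them, so the canonical bundle formula forces $a\equiv 0$.

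The step I expect to be the main obstacle is this last one: keeping the foliated discrepancy recursion along chains consistent with the index data $\operatorname{Z}(\cF,C_i,p)$ and the self-intersection constraints, and in particular verifying that "minimal resolution of $\cF$" in the sense used in Theorem \ref{can_sing} is exactly the notion for which the discrepancy formulas of \cite{Ch:lcfsing} were established (as opposed to the minimal resolution of the underlying surface $X$). Once the correspondence of dual graphs is fixed, the evaluation of the discrepancies is a finite arithmetic check.
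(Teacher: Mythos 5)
Your proposal is correct and follows essentially the same route as the paper: it treats Theorem~\ref{can_sing} as a restatement of the classification in \cite{Ch:lcfsing} restricted to the canonical case (where, as you note, the absence of non-invariant exceptional components is the distinguishing feature), and then evaluates the discrepancies. The paper's discrepancy computation is phrased as solving the linear system $(K_\cG+\sum_i a_iC_i)\cdot C_j=0$ against the negative-definite intersection matrix and reading off $a(C_s,\cF)=1/\det(C_i\cdot C_j)=1/r$; your Hirzebruch--Jung continued-fraction recursion together with the foliated adjunction formula is the same computation written iteratively, so the two are interchangeable.
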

\begin{proof}
    We sketch the proof for the discrepancies. 
    Let $\pi^*K_\cF=K_\cG+\sum_ia_iC_i$ where $C_i$ are all $\pi$-exceptional prime divisors. 
    Note that the self-intersection matrix $(C_i\cdot C_j)$ is negative definite and we consider $(K_\cG+\sum_ia_iC_i)\cdot C_j = 0$ for all $j$. 
    For (1), note that $(\sum_ia_iC_i)\cdot C_1 = 1$ and $(\sum_ia_iC_i)\cdot C_j = 0$ for $j\geq 2$. 
    Thus 
    \[a(C_s,\cF)=-a_s=\frac{1}{\det(C_i\cdot C_j)}=\frac{1}{r}\]
    where $r$ is the index of $K_\cF$ at the point $\pi(\bigcup_{i=1}^sC_i)$. 
    For (2)-(5), it is a direct check that $a(G_i,\cF)$ have the prescribed values as $a(G_i,\cF)=-a_i$ for all $i$. 
\end{proof}

\begin{theorem}\label{lc_sing}
    Let $\cF$ be a foliation on a normal surface $X$ and $p\in X$ a log canonical non-canonical singularities for $\cF$. 
    Let $\pi: Y \to X$ be the minimal resolution of $\cF$ over $p$ and $\cG=\pi^{-1}\cF$ be the pullback foliation. 
    Then the exceptional divisor $\bigcup_{i\geq0}E_i$ of $\pi$ over $p$ has the following properties:
    \begin{enumerate}[$(1)$]
        \item There is exactly one irreducible component $E_0$ which is not $\cG$-invariant and $(K_\cG+E_0)\cdot E_0=0$, 
        \item $\bigcup_{i\geq1}E_i$ is a disjoint union of $\cG$-chains, and 
        \item for $i\geq 1$, $E_0\cdot E_i=1$ if $E_i$ is a $(-1)$-$\cG$-curve and $E_0\cdot E_i=0$ if $E_i$ is a $(-2)$-$\cG$-curve. 
    \end{enumerate}
\end{theorem}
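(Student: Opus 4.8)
**Proof proposal for Theorem \ref{lc_sing} (the classification of log canonical non-canonical foliated surface singularities).**

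The plan is to reduce the statement to the structural results already available in the literature on foliated surface singularities, notably McQuillan's and Brunella's dichotomy for the exceptional divisor of the minimal resolution of a log canonical foliation, together with the index/discrepancy computations from \cite{Ch:lcfsing}. First I would recall that since $p$ is a log canonical, non-canonical singularity for $\cF$, running over the minimal resolution $\pi: Y\to X$ of $\cF$ over $p$ and writing $\pi^*K_\cF = K_\cG + \sum_{i\geq 0} a_i E_i$, the log canonical condition forces every $a_i \leq \epsilon(E_i)$, and non-canonicity forces at least one $a_i$ to be strictly positive; I would argue that such a component must be \emph{non-}$\cG$-invariant, because for a $\cG$-invariant divisor $E$ appearing on the minimal resolution one always has $a(E,\cF)\leq 0$ (the minimal resolution extracts no $\cG$-invariant divisor with positive discrepancy, by minimality and the index theorem of \cite[Chapter 3]{B:BGf}). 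This gives part (1) once I also establish uniqueness of the non-invariant component and the identity $(K_\cG+E_0)\cdot E_0 = 0$.

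For the uniqueness in (1) and the adjunction identity, I would use the index formula for $\cG$-invariant and non-invariant curves on a nonsingular surface: for a non-$\cG$-invariant curve $E_0$ one has $K_\cG\cdot E_0 = (\text{tangency of }\cG\text{ along }E_0) - E_0^2 \geq -E_0^2$, hence $(K_\cG+E_0)\cdot E_0 \geq 0$ always, with equality iff $\cG$ is everywhere transverse to $E_0$ (no tangencies); and the negative-definiteness of the intersection matrix $(E_i\cdot E_j)$ together with $(K_\cG + \sum a_i E_i)\cdot E_j = 0$ forces, after a discrepancy bookkeeping argument, that there can be only one such transverse component and that along it $\cG$ must be transverse, i.e. $(K_\cG + E_0)\cdot E_0 = 0$. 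Concretely this is the statement that a log canonical non-canonical foliated surface singularity is, after minimal resolution, a "transverse" configuration — the foliation meets $E_0$ transversally and is tangent to everything else. Here I would lean directly on the relevant proposition of \cite{Ch:lcfsing} (the classification theorem whose canonical part is quoted as Theorem \ref{can_sing}) and extract the non-canonical branch.

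For (2) and (3), I would analyze the components $E_i$ with $i\geq 1$: these are all $\cG$-invariant (by part (1)), they form a Hirzebruch–Jung-type configuration since $\pi$ is in particular a resolution of the surface singularity underlying the contraction of the $\cG$-invariant part, and the minimality of the foliated resolution together with $a_i \leq 0$ forces each connected component of $\bigcup_{i\geq 1} E_i$ to be an $\cF$-chain in the sense of the definition recalled above — one checks the three defining conditions ($\cF$-invariance, reduced non-degenerate singularities of $\cG$ along the chain, and the index pattern $\operatorname{Z}(\cG, C_1)=1$, $\operatorname{Z}(\cG, C_i)=2$ for $i\geq 2$) using Brunella's index theorems and the fact that $E_0$ is the unique non-invariant curve; the singular points of $\cG$ on these chains being reduced and non-degenerate is exactly where log canonicity (as opposed to mere "foliated lc with worse points") is used. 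Finally, for (3), the intersection $E_0\cdot E_i$ is controlled by $\operatorname{Z}(\cG, E_i)$ via the index computation: $E_0$ attaches to a $(-1)$-$\cG$-curve of a chain (raising its $\operatorname{Z}$ from the would-be $1$ to the chain-internal value) precisely when $E_0\cdot E_i = 1$, and attaches to a $(-2)$-$\cG$-curve otherwise, which is the asserted dichotomy.

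The main obstacle I expect is the careful verification in step two and three that the $\cG$-invariant components really assemble into $\cF$-chains with the precise index pattern, and in particular that $\cG$ has only reduced non-degenerate singularities along them — this requires combining Brunella's local index theory with the global negative-definite linear algebra of the resolution graph, and is the place where one must genuinely invoke the full strength of the classification in \cite{Ch:lcfsing} rather than soft arguments. A secondary subtlety is ruling out extra non-$\cG$-invariant exceptional components, which one handles by the discrepancy inequality: a second non-invariant component would contribute a positive discrepancy term that, combined with negative-definiteness, is incompatible with the log canonical bound $a_i\leq 1$ propagating along the graph. Once these local-to-global compatibilities are in hand, properties (1)–(3) follow by unwinding the definitions, so I would present the proof as: (i) identify $E_0$ and prove $(K_\cG+E_0)\cdot E_0=0$; (ii) show all other components are $\cG$-invariant and $\leq 0$-discrepancy; (iii) organize them into $\cF$-chains; (iv) compute $E_0\cdot E_i$ via the $\operatorname{Z}$-index.
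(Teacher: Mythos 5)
The paper does not actually prove this statement: Theorem \ref{lc_sing} is explicitly presented as a restatement of the classification of log canonical foliated surface singularities from \cite{Ch:lcfsing} (only the canonical part, Theorem \ref{can_sing}, receives a sketched discrepancy computation). Your proposal is therefore compatible with the paper's treatment, since at every genuinely hard point — uniqueness of the non-invariant component, the identity $(K_\cG+E_0)\cdot E_0=0$, and the assembly of the invariant part into $\cG$-chains with the stated index pattern — you defer to the same reference. One logical slip worth fixing: to show that a component $E_i$ with $a_i>0$ (i.e.\ $a(E_i,\cF)=-a_i<0$) is non-$\cG$-invariant, you appeal to the claim that invariant divisors on the minimal resolution satisfy $a(E,\cF)\leq 0$; that inequality points the wrong way and does not yield the conclusion. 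The correct and immediate reason is the log canonical hypothesis itself: for $\cG$-invariant $E$ one has $\epsilon(E)=0$, so log canonicity forces $a(E,\cF)\geq 0$, which is incompatible with $a(E_i,\cF)<0$. With that repaired, your outline (identify the unique transverse $E_0$ via the tangency formula and negative definiteness, then organize the invariant components into $\cG$-chains and read off $E_0\cdot E_i$ from the $\operatorname{Z}$-indices) matches the structure of the cited classification, though the verification that the singularities along the chains are reduced and non-degenerate is, as you note, exactly where the full strength of \cite{Ch:lcfsing} is needed rather than soft arguments.
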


\begin{lemma}[Precise adjunction]\label{lem:adjunction}
    Let $\cF$ be a foliation of rank one with canonical singularities on a normal projective surface $X$. 
    Let $C$ be an $\cF$-invariant curve with $\nu:C^\nu\rightarrow C$ the normalization map. 
    Then there is an effective divisor ${\rm Diff}_{C}(\cF)$ on $C^\nu$ such that $K_\cF\vert_{C^\nu} = K_{C^\nu}+{\rm Diff}_{C}(\cF)$, where $\operatorname{mult}_P{\rm Diff}_{C}(\cF)$ has the following form: 
    \begin{enumerate}[$(1)$]
        \item $\frac{r-1}{r}$ if $\nu(P)$ is a terminal foliation singularity and a cyclic quotient singularity of index $r$; 
        \item $1$ if $\nu(P)\in\Sing(X)$ and is a canonical non-terminal foliation singularity;
        \item $\operatorname{Z}(\cF,C,P)$ if $\nu(P)\in X$ is a nonsingular point. 
    \end{enumerate}
\end{lemma}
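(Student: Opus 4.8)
The plan is to reduce the statement to a local computation at each point $P\in C^\nu$, treating separately the case where $\nu(P)$ is a smooth point of $X$ and the case where it is a singular point, and in the latter case to transfer the computation to a resolution of $\cF$, where the exceptional configuration is governed by Theorem \ref{can_sing} and where the classical foliated adjunction on smooth surfaces is available.

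First I would note that $\operatorname{Diff}_C(\cF)$ is determined by its coefficients, so we may fix $P\in C^\nu$ and work in an analytic neighbourhood of $p:=\nu(P)$. If $p\notin\operatorname{Sing}(X)$, then $C$ is an $\cF$-invariant curve on a smooth surface, and the identity $K_\cF|_{C^\nu}=K_{C^\nu}+\operatorname{Diff}_C(\cF)$ with $\operatorname{mult}_P\operatorname{Diff}_C(\cF)=\operatorname{Z}(\cF,C,P)$ is Brunella's adjunction for rank-one foliations, obtained by restricting a local defining $1$-form of $\cF$ to the branch of $C$ through $p$ and reading off the order along $C^\nu$ (cf.\ \cite[Chapters 2--3]{B:BGf}). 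Since $\cF$ is canonical, this order is $\geq 0$, which settles case (3) and, in particular, effectivity at all such points.

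Next, suppose $p\in\operatorname{Sing}(X)$. Let $\pi\colon Y\to X$ be the minimal resolution of $\cF$ over $p$, followed if necessary by finitely many blowups centred away from $\operatorname{Sing}(\cG)$ so that the strict transform $\tilde C$ of $C$ is smooth (these extra blowups change neither the relevant discrepancies nor the relevant indices), and set $\cG=\pi^{-1}\cF$, identifying $\tilde C$ with $C^\nu$ near $P$ via $\pi|_{\tilde C}$. Writing $K_\cG=\pi^*K_\cF+\sum_i a(E_i,\cF)E_i$ with $a(E_i,\cF)\geq 0$ by canonicity, restricting to $\tilde C$, and applying the smooth-surface adjunction to the $\cG$-invariant curve $\tilde C$ yields
\[
\operatorname{mult}_P\operatorname{Diff}_C(\cF)=\operatorname{Z}(\cG,\tilde C,Q)-\sum_i a(E_i,\cF)\,(E_i\cdot \tilde C)_Q ,
\]
where $Q\in\tilde C$ lies over $P$. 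One then evaluates the right-hand side using Theorem \ref{can_sing}: determine the connected component of the exceptional locus over $p$, locate where the $\cF$-invariant curve $\tilde C$ meets it and with what local intersection multiplicity, and compute the foliated index there. If $p$ is terminal for $\cF$ and a cyclic quotient singularity of index $r$, this component is a $\cG$-chain whose tail carries discrepancy $\tfrac1r$ (Theorem \ref{can_sing}(1)) and which $\tilde C$ meets as its tail, so the two contributions combine to $\tfrac{r-1}{r}$; alternatively --- and more cleanly --- one writes $(X,\cF,C)$ locally as the quotient of $(\bA^2,\cG_0,D)$ by a cyclic group, where $\cG_0$ is a smooth foliation and $D$ one of its leaves with $\pi|_D\colon D\to C$ totally ramified of order $r$ over $P$, and then $K_{\cG_0}|_D=K_D$, $\pi^*K_\cF=K_{\cG_0}$ (as $\pi$ is \'etale in codimension one), together with the Riemann--Hurwitz formula give $r\cdot\operatorname{mult}_P\operatorname{Diff}_C(\cF)=r-1$. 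If instead $p$ is canonical but not terminal for $\cF$, it is one of the types (2)--(5) of Theorem \ref{can_sing}, the component of the exceptional locus met by $\tilde C$ carries discrepancy $0$, and the index computation leaves $\operatorname{mult}_P\operatorname{Diff}_C(\cF)=1$. In every case the coefficient is $\geq 0$, so $\operatorname{Diff}_C(\cF)$ is effective.

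The main obstacle is the bookkeeping in the singular case: one must pin down exactly which component of the exceptional configuration $\tilde C$ meets, the local intersection multiplicity there, and which reduced foliation singularity sits at that point, and then verify that Brunella's index $\operatorname{Z}(\cG,\tilde C,\cdot)$ combines with the discrepancy furnished by Theorem \ref{can_sing} to produce precisely $\tfrac{r-1}{r}$ or $1$. The Riemann--Hurwitz shortcut removes most of this work in case (1), but depends on the structural description of terminal foliated surface singularities as cyclic quotients of smooth foliations, which would be quoted from the literature.
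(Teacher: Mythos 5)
Your argument is essentially the paper's proof: pass to the minimal resolution of $\cF$, apply Brunella's adjunction $K_{\cF'}\vert_{C'}\sim K_{C'}+\sum\operatorname{Z}(\cF',C',P')P'$ on the smooth model, and subtract the restriction of the (effective, by canonicity) discrepancy divisor, whose coefficients along the components met by the strict transform of $C$ are read off from Theorem~\ref{can_sing}; the paper packages the final step as ${\rm Diff}_{C}(\cF)=(\pi_C)_*\operatorname{Diff}_{C'}(\cF',-E)$ via \cite[Remark 3.10]{CS:fadj}. Your local, point-by-point organization and the optional Riemann--Hurwitz shortcut for the cyclic-quotient case are fine but do not change the substance of the argument.
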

\begin{proof}
    Let $\pi: X'\to X$ be the minimal resolution of $\cF$ with $\cF'=\pi^{-1}\cF$ and $C'=\pi_*^{-1}C$. 
    By Theorem~\ref{can_sing}, $\cF'$ has at worst reduced singularities and $C'$ is nonsingular. 
    Then by \cite[Proposition 2.3]{B:BGf} and its proof, we have $K_{\cF'}\vert_{C'}\sim K_{C'}+\sum_{P'\in C'}\operatorname{Z}(\cF',C',P')P'$. 
    
    Write $K_{\cF'} = \pi^*K_\cF+E$, where $E$ is effective as $\cF$ has at worst canonical singularities. By Theorem~\ref{can_sing}, it is a direct computation that the support of $E$ is the union of $\cF'$-chains and $E\vert_{C'} = \sum_{P'\in E\cap C'} \frac{1}{r_{P'}}P'$ where $r_{P'}$ is the index of $K_\cF$ at $\nu(P')$. 
    
    Let $\pi_C: C'\to C^\nu$ be morphism induced from $\pi$.
    By \cite[Remark 3.10]{CS:fadj}, we have ${\rm Diff}_{C}(\cF)=(\pi_C)_*\operatorname{Diff}_{C'}(\cF',-E)$ and hence $\operatorname{mult}_P\Delta$ satisfies $(1)$-$(3)$. 
\end{proof}

\begin{lemma}[{Adjunction for non-invariant divisors}]\label{lem:adj_non-inv}
    Let $\cF$ be a foliation of rank one on a normal $\bQ$-factorial projective surface $X$. 
    Let $C$ be a non-$\cF$-invariant curve with $\nu: C^\nu\to C$ the normalization map. 
    Then $(K_\cF+C)\vert_{C^\nu} = \operatorname{Diff}_C(\cF)$ is an effective divisor on $C^\nu$. 
\end{lemma}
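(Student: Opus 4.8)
The plan is to realize $\operatorname{Diff}_C(\cF)$ as a sum of two manifestly effective divisors on $C^\nu$: one measuring the tangency defect of $\cF$ along $C$, and one being the different of classical adjunction on the normal surface. Since $\cF$ is saturated of rank one, the quotient $N_\cF:=T_X/\cF$ is torsion-free of rank one, and comparing first Chern classes in codimension one in the sequence $0\to\cF\to T_X\to N_\cF\to 0$ gives $c_1(N_\cF)=c_1(T_X)-c_1(\cF)=K_\cF-K_X$ (cf. \cite{B:BGf}). Hence the inclusion of $N_\cF$ into its reflexive hull identifies it with a subsheaf of $\cO_X(K_\cF-K_X)$ whose cokernel is supported in codimension two. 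As $X$ is $\bQ$-factorial, $K_X$, $K_\cF$, and $C$ are $\bQ$-Cartier, so the pullbacks $\nu^*K_X$, $\nu^*K_\cF$, $\nu^*C$ to the smooth curve $C^\nu$ are well defined, and $(K_\cF+C)|_{C^\nu}$ means $\nu^*(K_\cF+C)$.

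First I would pull back the tautological sequence along $\nu$ and feed in the differential $\mathrm d\nu\colon T_{C^\nu}\to\nu^*T_X$. Writing $\mathcal{T}\subset\nu^*N_\cF$ for the torsion subsheaf, the quotient $(\nu^*N_\cF)/\mathcal{T}$ is a line bundle isomorphic to $\cO_{C^\nu}\big(\nu^*(K_\cF-K_X)-D'\big)$ for some effective $D'\ge 0$: indeed $\nu^*N_\cF$ maps to $\cO_{C^\nu}(\nu^*(K_\cF-K_X))$ with torsion kernel and torsion cokernel, so modulo torsion it becomes the inclusion of a line bundle twisted down by the cokernel divisor $D'$. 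Now consider the composite
\[
\sigma\colon\ T_{C^\nu}\xrightarrow{\ \mathrm d\nu\ }\nu^*T_X\longrightarrow\nu^*N_\cF\longrightarrow(\nu^*N_\cF)/\mathcal{T}.
\]
The crucial point is that $\sigma$ is nonzero \emph{precisely because $C$ is not $\cF$-invariant}: at a general point of $C$ the foliation is tangent to $C$ if and only if $T_C$ lies in the kernel $\cF$ of $T_X\to N_\cF$, i.e. if and only if $C$ is $\cF$-invariant, and the torsion quotient does not kill the (torsion-free) image. A nonzero morphism $T_{C^\nu}=\cO_{C^\nu}(-K_{C^\nu})\to\cO_{C^\nu}\big(\nu^*(K_\cF-K_X)-D'\big)$ of line bundles on the smooth curve $C^\nu$ then shows that $K_{C^\nu}+\nu^*(K_\cF-K_X)-D'$ is effective, hence $K_{C^\nu}+\nu^*(K_\cF-K_X)\ge 0$.

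Second, I would invoke the ordinary adjunction formula for $(X,C)$ on the normal surface, $\nu^*(K_X+C)=K_{C^\nu}+\operatorname{Diff}_C(X)$ with $\operatorname{Diff}_C(X)\ge 0$ effective (valid even when $C$ is singular, since $K_X+C$ is $\bQ$-Cartier). Adding the two identities,
\[
(K_\cF+C)\big|_{C^\nu}=\nu^*(K_\cF-K_X)+\nu^*(K_X+C)=\big(K_{C^\nu}+\nu^*(K_\cF-K_X)\big)+\operatorname{Diff}_C(X)\ \ge\ 0,
\]
and one sets $\operatorname{Diff}_C(\cF):=(K_\cF+C)|_{C^\nu}$, which is the asserted effective divisor (and is seen to be $\ge D'+\operatorname{Diff}_C(X)$).

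The argument is short, so I do not anticipate a genuinely hard step; the point needing the most care is the bookkeeping on the possibly singular surface $X$ and possibly singular curve $C$: that $N_\cF$ is torsion-free with $c_1(N_\cF)=K_\cF-K_X$, so that its pullback differs from $\cO_{C^\nu}(\nu^*(K_\cF-K_X))$ only by the effective correction $D'$; and effectivity of the classical different $\operatorname{Diff}_C(X)$ on the $\bQ$-factorial normal surface. If one prefers to avoid these global considerations, the inequality can instead be checked locally at each point of $C$: over $X_{\mathrm{reg}}$ it is Brunella's tangency formula for non-$\cF$-invariant curves \cite{B:BGf}, while over a point of $\Sing(X)$ one passes to the minimal resolution of the germ and tracks discrepancies; but the global argument above seems cleanest.
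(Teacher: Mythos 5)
The paper disposes of this lemma with a single citation to \cite[Proposition--Definition 3.6]{CS:fadj}, so there is no internal proof to compare against; what you have written is a genuine from-scratch argument, and the underlying idea is the correct one. Over the smooth locus your reasoning is exactly Brunella's tangency computation: the nonzero composite $T_{C^\nu}\to\nu^*T_X\to\nu^*N_\cF$ (nonzero precisely because $C$ is not $\cF$-invariant and $T_{C^\nu}$ is torsion-free) forces $K_{C^\nu}+\nu^*(K_\cF-K_X)$ to be effective, and adding the classical adjunction $\nu^*(K_X+C)=K_{C^\nu}+\operatorname{Diff}_C(X)$ with $\operatorname{Diff}_C(X)\ge 0$ gives the claim.

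The one genuine gap is the step you yourself flag as ``bookkeeping,'' and it is not merely cosmetic. You assert that $(\nu^*N_\cF)/\mathcal T\cong\cO_{C^\nu}(\nu^*(K_\cF-K_X)-D')$ with $D'\ge 0$ because ``$\nu^*N_\cF$ maps to $\cO_{C^\nu}(\nu^*(K_\cF-K_X))$ with torsion kernel and torsion cokernel.'' But $\nu^*(K_\cF-K_X)$ is in general a genuine $\bQ$-divisor at points of $C^\nu$ lying over $\Sing(X)$, so $\cO_{C^\nu}(\nu^*(K_\cF-K_X))$ is not a sheaf, and even after rounding there is no canonical map $\nu^*\cO_X(K_\cF-K_X)\to\cO_{C^\nu}(\lfloor\nu^*(K_\cF-K_X)\rfloor)$: the reflexive hull $\cO_X(K_\cF-K_X)$ is only a $\bQ$-Cartier divisorial sheaf, not a line bundle, near $\Sing(X)\cap C$, and pullback of a reflexive sheaf along $\nu$ does not commute with forming $\cO$ of the pulled-back divisor. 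What you actually need is the local inequality $\deg_q\bigl(\nu^*\cO_X(K_\cF-K_X)/\text{torsion}\bigr)\le \operatorname{mult}_q\nu^*(K_\cF-K_X)$ at each $q\in C^\nu$ over a singular point, and that is a substantive statement that your proof takes for granted. Your proposed local fallback (``pass to the minimal resolution of the germ and track discrepancies'') also amounts to proving precisely this inequality, so it does not avoid the work. The cleanest repair in the regime relevant to this paper (klt, hence quotient, surface singularities) is to pass to a local index-one cover $q:(\hat X,\hat p)\to(X,p)$: one has $q^*K_\cF=K_{\hat\cF}$, $q^*K_X=K_{\hat X}$, $q^*C=\hat C$, Brunella's tangency formula applies on the smooth germ $\hat X$, and effectivity descends since $\hat C^\nu\to C^\nu$ is finite surjective. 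For a general normal $\bQ$-factorial surface (where klt need not hold) one would instead need to carry out the resolution-and-discrepancy comparison carefully, which is essentially what \cite{CS:fadj} does. In short: right decomposition and right mechanism, but the step identifying the torsion-free pullback of the conormal sheaf with a twist of a line bundle bounded above by $\nu^*(K_\cF-K_X)$ is asserted rather than proved, and it is the only place the singularities of $X$ actually enter.
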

\begin{proof}
    This is a special case of \cite[Proposition-Definition 3.6]{CS:fadj}. 
\end{proof}

\subsection{Toric varieties and toric foliations}
All toric varieties are assumed to be normal. For notations of toric varieties, we follow \cite{CLS}. 

Let $N=\bZ^n$ be a lattice of rank $n$ and $M:=\operatorname{Hom}(N,\bZ)$ be the dual lattice. We write $N_\bR=N\otimes_\bZ\bR$, $N_\bC=N\otimes_\bZ\bC$, and $M_\bR = M\otimes_\bZ\bR$. A \emph{fan} $\Sigma$ in $N_\bR$ is a finite collection of rational strongly convex polyhedral cones. 
For any non-negative integer $k$, we denote the set of $k$-dimensional cones in $\Sigma$ as $\Sigma(k)$ and the set of $k$-dimensional faces of $\sigma\in\Sigma$ as $\sigma(k)$. The toric variety $X_{\Sigma}$ of the fan $\Sigma$ in $N_\bR$ is constructed by gluing all affine variety $U_{\sigma,N}= \operatorname{Spec}(\bC[\chi^m\mid m\in\sigma^\vee\cap M])$ together, where $\sigma^\vee\subseteq M_\bR$ is the dual cone of $\sigma$. For each $\sigma\in\Sigma$, $O_\sigma$ denotes the $T$-orbit of the
distinguished point $x_\sigma$, and $V_\sigma$ denotes the closure of $O_\sigma$ in $X_\Sigma$. 
If $\rho\in\Sigma(1)$ is a ray, then $V_\rho$ is a divisor and will also be denoted as $D_\rho$.

For the theory of toric foliations, we follow \cite{CC:Tf}. A foliation $\cF$ on a toric variety $X_\Sigma$ is called \emph{toric} if it is equivariant under the torus action. 
\begin{proposition}[{\cite[Proposition 3.1]{CC:Tf}}]\label{prop:toric-ss}
    Let $X_\Sigma$ be a toric variety of a fan $\Sigma$ in $N_\bR$. 
    Then, there is a one-to-one correspondence between the set of toric foliations on $X_\Sigma$ and the set of complex vector subspaces $V$ of $N_\bC$. 
\end{proposition}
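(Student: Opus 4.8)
The plan is to pass through the big open torus $T\subseteq X_\Sigma$, $T=\Spec\bC[M]$, and to identify both sides of the claimed bijection with data on $T$. First I would set up the standard dictionary between $N_\bC$ and invariant vector fields: to $v\in N_\bC$ associate the derivation $\partial_v$ determined by $\partial_v(\chi^m)=\langle m,v\rangle\,\chi^m$ for $m\in M$. On each affine chart $U_{\sigma,N}=\Spec\bC[\sigma^\vee\cap M]$ this is a genuine derivation of $\bC[\sigma^\vee\cap M]$, and these glue to a torus-invariant global section of $T_{X_\Sigma}$; this gives a $\bC$-linear embedding $N_\bC\hookrightarrow H^0(X_\Sigma,T_{X_\Sigma})$ whose image is exactly the $T$-invariant vector fields. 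Over $T$ the $\partial_v$ trivialize the tangent sheaf, so $T_T=\cO_T\otimes_\bC N_\bC$ equivariantly, with $T$ acting trivially on $N_\bC$ (this is just the triviality of the adjoint action of the abelian group $T$ on its Lie algebra).

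Next I would define the map $V\mapsto\cF_V$ for a complex subspace $V\subseteq N_\bC$ by taking $\cF_V$ to be the saturation inside $T_{X_\Sigma}$ of $\sum_{v\in V}\cO_{X_\Sigma}\cdot\partial_v$. Three checks are needed, all routine: $\cF_V$ is saturated by construction; it is closed under the Lie bracket because $[\partial_v,\partial_{v'}]=0$ and the saturation of an involutive subsheaf is involutive; and it is $T$-equivariant since its generators are $T$-invariant and saturation preserves equivariance. Hence $\cF_V$ is a toric foliation, and by construction $\cF_V|_T=\cO_T\otimes_\bC V$ inside $T_T=\cO_T\otimes_\bC N_\bC$.

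Conversely, given a toric foliation $\cF$, I would restrict to $T$. Since $\cF$ is saturated, $T_{X_\Sigma}/\cF$ is torsion-free, hence locally free over the regular locus, so $\cF|_T$ is a $T$-equivariant subbundle of $T_T$. Because $T$ acts simply transitively on itself, every $T$-equivariant bundle on $T$ is equivariantly trivial; thus the inclusion $\cF|_T\hookrightarrow\cO_T\otimes_\bC N_\bC$, after trivializing the source, is $\cO_T$-linear, equivariant, and between bundles with trivial fiber action, hence of the form $\cO_T\otimes\iota$ for an injective linear map $\iota$. I then set $V_\cF$ to be the image of $\iota$, equivalently the fiber of $\cF|_T$ at the identity, equivalently the space of invariant sections of $\cF|_T$ viewed inside $(\cO_T\otimes_\bC N_\bC)^T=N_\bC$.

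It remains to see the two assignments are mutually inverse. The equality $V_{\cF_V}=V$ is immediate from $\cF_V|_T=\cO_T\otimes V$. For $\cF_{V_\cF}=\cF$, I would observe that $\cF_{V_\cF}$ and $\cF$ are both saturated subsheaves of the torsion-free sheaf $T_{X_\Sigma}$ on the integral variety $X_\Sigma$, and they agree on the dense open set $T$ (both restrict to $\cO_T\otimes V_\cF$). A saturated subsheaf of a torsion-free sheaf is determined by its restriction to any nonempty open subset: for two such subsheaves $\cF_1,\cF_2$, the sheaf $\cF_1/(\cF_1\cap\cF_2)$ is torsion and injects into the torsion-free sheaf $T_{X_\Sigma}/\cF_2$, hence vanishes, giving $\cF_1\subseteq\cF_2$, and symmetrically $\cF_2\subseteq\cF_1$. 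I expect the only substantive point to be the equivariant triviality of bundles on the torus used in the third step; the rest is bookkeeping with saturations and with restriction to a dense open set.
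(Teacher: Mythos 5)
The paper does not prove this proposition---it is quoted verbatim from \cite[Proposition~3.1]{CC:Tf}---so there is nothing in the source to compare against; I can only assess your argument on its own terms. Your overall architecture is sound: identify $T$-invariant vector fields with $N_{\bC}$, send $V$ to the saturation of the sheaf generated by $\{\partial_v : v\in V\}$, and recover $V$ from a toric foliation by restricting to the open torus. The checks that the saturation of an involutive subsheaf is involutive, that saturation preserves equivariance, and that two saturated subsheaves of a torsion-free sheaf agreeing on a dense open subset coincide, are all correct.

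There is, however, one genuine gap. You assert that since $T_{X_\Sigma}/\cF$ is torsion-free it is ``locally free over the regular locus,'' and conclude that $\cF\vert_T$ is a subbundle. Torsion-freeness does not imply local freeness over the smooth locus once $\dim X_\Sigma\geq 3$ (e.g.\ the ideal sheaf of a line in $\bA^3$ is torsion-free and saturated but not locally free), so this step does not establish what you need, and the proposition is stated for toric varieties of arbitrary dimension. The fix is to discard the torsion-freeness argument entirely and invoke equivariance one step earlier: $\cF\vert_T$ is a $T$-equivariant \emph{coherent} sheaf on $T$, and since $T$-equivariant quasi-coherent sheaves on $T$ under translation are the same as vector spaces (descent to $[T/T]=\mathrm{pt}$), $\cF\vert_T\cong\cO_T\otimes_\bC W$ for some finite-dimensional $W$; in particular it is automatically a trivial subbundle of $T_T=\cO_T\otimes_\bC N_\bC$, and $W$ embeds in $N_\bC$. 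Equivalently, the non-locally-free locus of $\cF\vert_T$ is a proper closed $T$-invariant subset of $T$, hence empty by transitivity of the action. With this replacement the rest of your argument goes through unchanged.
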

We will use $\cF_{V,\Sigma,N}$ to denote the toric foliation associated with the complex vector subspace $V$ of $N_\bC$. 
If $\Sigma$ or $N$ are clear in the context, we will omit the subscriptions $\Sigma$ or $N$. 

\begin{proposition}[{\cite[Proposition 3.7]{CC:Tf}}]\label{prop:toric_fol_can_div}
    Let $\Sigma$ be a fan in $N_\bR$, $V\subseteq N_\bC$ be a complex vector subspace, and $\cF:=\cF_V$ be the corresponding toric foliation on $X_\Sigma$. 
    Then $K_\cF=-\sum_{\rho\in\Sigma(1),\,\rho\subseteq V}D_\rho$.
\end{proposition}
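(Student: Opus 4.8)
The plan is to first reduce the computation of $K_{\cF_V}$ to that of a torus-invariant Weil divisor, and then to determine its coefficient along each boundary divisor $D_\rho$ by a local linear-algebra computation at the generic point of $D_\rho$. For the reduction I would use that on the open torus $T\subseteq X_\Sigma$ the tangent bundle is canonically trivial, $T_T\cong\cO_T\otimes_\bC N_\bC$, and that, by Proposition~\ref{prop:toric-ss}, $\cF_V|_T=\cO_T\otimes_\bC V$. Globally the invariant vector fields identify the logarithmic tangent sheaf $T_{X_\Sigma}(-\log\partial X)$, with $\partial X=\sum_{\rho\in\Sigma(1)}D_\rho$, with the trivial sheaf $\cO_{X_\Sigma}\otimes_\bZ N=\cO_{X_\Sigma}\otimes_\bC N_\bC$; under this identification the constant subsheaf $\widetilde{\cF}:=\cO_{X_\Sigma}\otimes_\bC V$ is free of rank $r=\dim_\bC V$, agrees with $\cF_V$ on $T$, and $\cF_V$ is its saturation inside $T_{X_\Sigma}$. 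Hence $\det\widetilde{\cF}\cong\cO_{X_\Sigma}$, the natural map $\det\widetilde{\cF}\to\det\cF_V$ of rank-one reflexive sheaves is injective (being generically an isomorphism), and so $\det\cF_V=\cO_{X_\Sigma}(D)$ for an effective divisor $D$; since $\cF_V$ is torus-equivariant, $D=\sum_{\rho\in\Sigma(1)}a_\rho D_\rho$ with $a_\rho\in\bZ_{\geq0}$, and it suffices to show $a_\rho=1$ when $\rho\subseteq V$ and $a_\rho=0$ otherwise.

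To compute $a_\rho$ I would localize at $\rho$: extend the primitive generator $u_\rho$ to a $\bZ$-basis $e_1=u_\rho,e_2,\dots,e_n$ of $N$, so that $U_\rho=\Spec\bC[x_1,x_2^{\pm1},\dots,x_n^{\pm1}]$ with $x_i=\chi^{e_i^\vee}$, $D_\rho=\{x_1=0\}$, and the invariant vector field attached to $e_i$ equal to $x_i\partial_{x_i}$. Writing a $\bC$-basis $v^{(1)},\dots,v^{(r)}$ of $V$ as $v^{(k)}=\sum_i c_{ki}e_i$, one obtains
\[
v^{(1)}\wedge\cdots\wedge v^{(r)}=\sum_{|I|=r}\det(C_I)\Big(\prod_{i\in I}x_i\Big)\bigwedge_{i\in I}\partial_{x_i},
\]
where $C=(c_{ki})$ and $C_I$ is the submatrix on the columns in $I$. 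Passing to the discrete valuation ring $R=\cO_{X_\Sigma,D_\rho}$ (uniformizer $x_1$; each $x_i$, $i\geq2$, a unit), $\cF_V$ is the saturation of the free submodule $Rv^{(1)}+\cdots+Rv^{(r)}$ of $R^n$, so by Smith normal form $a_\rho=\ord_{D_\rho}(d)$ where $d$ is the greatest common divisor of the maximal minors above. Such a minor is $\det(C_I)\prod_{i\in I}x_i$, which is a unit of $R$ when $\det(C_I)\neq0$ and $1\notin I$, and is $x_1$ times a unit when $\det(C_I)\neq0$ and $1\in I$; since $C$ has rank $r$, this shows $a_\rho=0$ precisely when some maximal minor of $C$ avoiding the first column is nonzero, and $a_\rho=1$ otherwise. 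That condition says the images of $v^{(1)},\dots,v^{(r)}$ in $N_\bC/\langle e_1\rangle_\bC$ are linearly independent, i.e. $u_\rho\notin V$, i.e. $\rho\not\subseteq V$. Hence $a_\rho=1\iff\rho\subseteq V$ and $K_{\cF_V}=-D=-\sum_{\rho\in\Sigma(1),\,\rho\subseteq V}D_\rho$. Possible singularities of $X_\Sigma$ are irrelevant here, since each $\rho$ is a smooth cone, so the generic point of $D_\rho$ is smooth, and $K_{\cF_V}$ is determined in codimension one.

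The one point that genuinely needs care is the reduction step: correctly accounting for how the determinant changes when one passes from the free logarithmic subsheaf $\widetilde{\cF}$ to its saturation $\cF_V$, i.e. showing that the discrepancy is exactly the effective boundary divisor recorded by the gcd of Plücker coordinates. Doing this at the level of reflexive sheaves and codimension-one points—rather than with $\bigwedge^r T_{X_\Sigma}$, which need not be invertible when $r<n$—keeps the argument clean; the rest is the routine linear algebra above. As consistency checks: $V=0$ gives the foliation by points with $K_{\cF_V}=0$, and $V=N_\bC$ gives $\cF_V=T_{X_\Sigma}$ with $K_{\cF_V}=K_{X_\Sigma}=-\sum_{\rho\in\Sigma(1)}D_\rho$.
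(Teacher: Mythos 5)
The paper does not prove this proposition; it is imported verbatim from \cite[Proposition 3.7]{CC:Tf}, so there is no in-paper argument to compare yours against. On its own terms your proof is correct and complete. The chain of reductions is sound: the invariant vector fields $\delta_v(\chi^m)=\langle m,v\rangle\chi^m$ are global sections of $T_{X_\Sigma}$, so $\widetilde{\cF}=\cO_{X_\Sigma}\otimes_\bC V$ is a genuine free subsheaf of $T_{X_\Sigma}$ agreeing with $\cF_V$ on the torus, and since $\cF_V$ is saturated it is exactly the saturation of $\widetilde{\cF}$; the induced map $\cO\cong\det\widetilde{\cF}\to\det\cF_V$ of rank-one reflexive sheaves is injective, giving $\det\cF_V=\cO(D)$ with $D=\sum a_\rho D_\rho$ effective and torus-invariant; and the coefficient $a_\rho$ is computed at the generic point of $D_\rho$, which lies in the smooth chart $U_\rho\cong\bC\times(\bC^*)^{n-1}$, via the Smith-normal-form identity equating the colength of $\wedge^r M$ in $\wedge^r M^{\mathrm{sat}}$ with the valuation of the gcd of the maximal minors. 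The Pl\"ucker expansion then shows each nonzero minor is a unit times $x_1^{\varepsilon}$ with $\varepsilon\in\{0,1\}$, so $a_\rho\in\{0,1\}$ and $a_\rho=1$ exactly when every minor avoiding the first column vanishes, i.e.\ when $V\cap\bC u_\rho\neq 0$, i.e.\ when $\rho\subseteq V$. You also correctly flagged the one genuinely delicate point (working with the reflexive determinant rather than $\wedge^r T_{X_\Sigma}$, which need not be invertible for $r<n$ or on singular $X_\Sigma$) and handled it properly; the consistency checks at $V=0$ and $V=N_\bC$ are apt. This is, as far as I can tell, essentially the computation carried out in \cite{CC:Tf}.
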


\begin{proposition}[{\cite[Proposition 3.9]{CC:Tf}}]\label{prop:tor_fol_sing}
    Let $\cF_V$ be a foliation on a $\bQ$-factorial toric variety $X_\Sigma$ defined by a fan $\Sigma$ in $N_\bR$, where $V\subseteq N_\bC$ is a complex vector subspace. Then for any $\tau\in\Sigma$, $V_\tau\nsubseteq\operatorname{Sing}(\cF_V)$ if and only if $V\cap\bC\tau=\operatorname{Span}(S)$ for some $S\subseteq\tau(1)$ with the convention $\operatorname{Span}(\emptyset)=0$. 
\end{proposition}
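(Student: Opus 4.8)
The plan is to reduce the statement to an explicit computation on each affine toric chart, where the singular locus of $\cF_V$ is read off from the maximal minors (Plücker coordinates) of $V$ with respect to a coordinate basis. Since $\cF_V$ is torus--equivariant, $\operatorname{Sing}(\cF_V)$ is a closed torus--invariant subset of $X_\Sigma$, hence a union of orbit closures; so $V_\tau\subseteq\operatorname{Sing}(\cF_V)$ may be tested at the generic point of the orbit $O_\tau$, and I would work on the affine chart $U_\tau$, in which $V_\tau\cap U_\tau=O_\tau$. As $X_\Sigma$ is $\bQ$--factorial, $\tau$ is simplicial; write $\tau(1)=\{\rho_1,\dots,\rho_k\}$ with primitive generators $u_1,\dots,u_k$, which are $\bR$--linearly independent, and complete them to an $\bR$--basis $u_1,\dots,u_n$ of $N_\bR$ with $u_{k+1},\dots,u_n\in N$. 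If $u_1,\dots,u_k$ are part of a $\bZ$--basis of $N$ then $U_\tau$ is smooth, with coordinates $x_1,\dots,x_k$ cutting out the toric boundary and $x_{k+1},\dots,x_n$ invertible; the non-smooth case is deferred to the last step.

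On the big torus, $\cF_V$ is freely generated by the invariant vector fields $\partial_v$ ($v\in V$), where $\partial_v=\sum_i v_i\,x_i\partial_{x_i}$ for $v=\sum_i v_iu_i$ (cf.\ Proposition~\ref{prop:toric-ss}); globally on $U_\tau$ the foliation $\cF_V$ is the saturation of $\cO_{U_\tau}\otimes_{\bC}V$ inside $T_{U_\tau}$. Fixing a basis $v_1,\dots,v_r$ of $V$ (so $r=\dim_{\bC}V=\operatorname{rank}\cF_V$) and letting $A=\big((v_j)_i\big)$ be the corresponding $r\times n$ matrix, one has, up to units,
\[
\partial_{v_1}\wedge\cdots\wedge\partial_{v_r}=\sum_{|I|=r}\,M_I\Big(\textstyle\prod_{i\in I,\,i\le k}x_i\Big)\,\bigwedge_{i\in I}\partial_{x_i},
\]
the sum over $r$--element subsets $I\subseteq\{1,\dots,n\}$, with $M_I$ the corresponding maximal minor of $A$. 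Passing to the saturation amounts to dividing the coefficients by their greatest common divisor; as the $M_I$ are scalars this gcd is the squarefree monomial $\prod_{i\in S_0}x_i$ with $S_0=\{\,i\le k:M_I=0\text{ whenever }i\notin I\,\}$, and a direct check identifies $S_0=\{\,i\le k:\bC u_i\subseteq V\,\}$, i.e.\ the indices for which $D_{\rho_i}$ contributes to $K_{\cF_V}$, consistently with Proposition~\ref{prop:toric_fol_can_div}. Hence $\operatorname{Sing}(\cF_V)\cap U_\tau$ is the codimension~$\ge2$ part of the vanishing locus of $\sum_{|I|=r}M_I\big(\prod_{i\in I\setminus S_0,\,i\le k}x_i\big)\bigwedge_{i\in I}\partial_{x_i}$.

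Now $O_\tau=\{x_1=\dots=x_k=0\}$ avoids this vanishing locus precisely when some coefficient is a nonzero constant, i.e.\ when there is an $I$ with $M_I\neq0$ and $I\cap\{1,\dots,k\}\subseteq S_0$. Since $M_I\neq0$ forces $S_0\subseteq I$, such an $I$ equals $S_0\sqcup I'$ with $I'\subseteq\{k+1,\dots,n\}$, and by the standard fact that some maximal minor of $A$ supported in a column set $T$ is nonzero iff $V\cap\operatorname{Span}(u_j:j\notin T)=0$, its existence is equivalent to $V\cap\operatorname{Span}(u_i:i\le k,\ i\notin S_0)=0$. Because $\operatorname{Span}(u_1,\dots,u_k)=\bC\tau$, the space $\operatorname{Span}(u_i:i\le k,\ i\notin S_0)$ is a complement of $\operatorname{Span}(u_i:i\in S_0)$ in $\bC\tau$, and $\operatorname{Span}(u_i:i\in S_0)\subseteq V$, so a one-line linear-algebra argument converts this into $V\cap\bC\tau=\operatorname{Span}(u_i:i\in S_0)$; finally, any $S\subseteq\tau(1)$ with $\operatorname{Span}(S)=V\cap\bC\tau$ must have $\operatorname{Span}(S)=\operatorname{Span}(u_i:i\in S_0)$, so the last condition is equivalent to the existence of some $S\subseteq\tau(1)$ with $V\cap\bC\tau=\operatorname{Span}(S)$. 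This gives the asserted equivalence in the smooth case.

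For the remaining $\bQ$--factorial case, the coordinates $x_i$ exist only on the smooth chart $U_{\tau,N'}$ over the finite-index sublattice $N'=\sum_{i=1}^n\bZ u_i\subseteq N$, with $U_\tau=U_{\tau,N'}/G$ for a finite abelian group $G$. I would either run the above computation directly on $U_\tau$ --- using that the sheaf of logarithmic differentials $\Omega^1_{U_\tau}(\log\partial U_\tau)\cong\cO_{U_\tau}\otimes_{\bZ}M$ remains locally free, so the map defining $\operatorname{Sing}(\cF_V)$ can still be written via the minors $M_I$ --- or descend from $U_{\tau,N'}$, noting that $\cF_{V,N'}=q^{-1}\cF_{V,N}$ for the quotient $q$ and that the combinatorial condition is unchanged under $N'\subseteq N$ (rescaling ray generators affects neither the rays nor $\bC\tau$). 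I expect this to be the only real obstacle: in the smooth case the proof is essentially linear algebra once the minors and the monomial gcd are available, whereas at a toric quotient singularity the tangent sheaf is not locally free and $\cF_V$ may be a non-free reflexive sheaf, so unwinding the double-dual definition of $\operatorname{Sing}(\cF_V)$ there requires some care.
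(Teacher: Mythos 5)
The paper gives no proof of this proposition; it is quoted verbatim from \cite[Proposition 3.9]{CC:Tf}, so there is no in-paper argument to compare against. Judged on its own, your argument on a smooth cone $\tau$ is correct and complete: the reduction to the affine chart $U_\tau$ via torus-equivariance of $\operatorname{Sing}(\cF_V)$, the expansion of $\partial_{v_1}\wedge\cdots\wedge\partial_{v_r}$ in Pl\"ucker coordinates $M_I$, the identification of the monomial content $\prod_{i\in S_0}x_i$ with $S_0=\{i\le k:\bC u_i\subseteq V\}$ (a coloop computation: $i$ lies in every $I$ with $M_I\neq 0$ iff $u_i$ is in the row space), and the chain of equivalences ``$\exists$ basis $I$ with $I\cap\{1,\dots,k\}\subseteq S_0$'' $\Leftrightarrow$ ``$V\cap\operatorname{Span}(u_i:i\le k,\,i\notin S_0)=0$'' $\Leftrightarrow$ ``$V\cap\bC\tau=\operatorname{Span}(u_i:i\in S_0)$'' all check out, and the cross-check against Proposition~\ref{prop:toric_fol_can_div} is sound.

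The $\bQ$-factorial non-smooth case, however, is left as an acknowledged sketch, and that is a genuine gap rather than a routine remark, because both of your proposed routes need real work. For route (i), $\operatorname{Sing}(\cF_V)$ is defined via $(\wedge^r\Omega_X\otimes\cO_X(-K_{\cF_V}))^{\vee\vee}\to\cO_X$, not via $\wedge^r\Omega_X(\log\partial)$, and these two reflexive sheaves differ along the toric boundary; for a simplicial non-smooth $\tau$ the generic point of $O_\tau$ lies in $\operatorname{Sing}(U_\tau)$, exactly where the discrepancy must be tracked. For route (ii), the covering $q:U_{\tau,N'}\to U_{\tau,N}$ is ramified along boundary divisors rather than quasi-\'etale, so the identity $q^{-1}\operatorname{Sing}(\cF_{V,N})=\operatorname{Sing}(\cF_{V,N'})$ does not follow by reflexive descent and needs a proof. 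Since the paper applies the proposition (in the proof of Theorem~\ref{prop:dim_Z+=1}) at torus-fixed points of singular Gorenstein del Pezzo toric surfaces such as $\bP(1,1,2)$ and no.\,6d, the non-smooth case cannot be sidestepped; one of the two approaches should be carried through explicitly.
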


\section{A foliated MMP on an unstable weak Fano variety}\label{sec:fMMP}
\subsection{Maximal destabilizing sheaf} Let $X$ be a weak $\bQ$-Fano variety with at worst canonical singularities, that is $-K_X$ is $\bQ$-Cartier and nef and big. Consider the slope stability of $T_X$ with respect to $-K_X$. This is equivalent to the $\alpha$-slope stability with respect to the movable curve class $\alpha$ associated to $|m(-K_X)|$, and by \cite{CP:GS} there exists a unique Harder-Narashimhan filtration 
$$0=\cF_0\subsetneq\cF_1\subsetneq\cdots\subsetneq\cF_r=T_{X}$$
such that 
$\cQ_i:=\cF_i/\cF_{i-1}$ is $(-K_X)$-slope semistable for $i=1,\dots, r$ and with 
$$\mu_{\max}(T_X):=\mu(\cQ_1)>\cdots>\mu(\cQ_r):=\mu_{\min}(T_X).$$
Recall that each $\cF_i$ is defined so that $\cQ_i\subseteq T_X/\cF_{i-1}$ is the maximal destabilizing sheaf. In particular, 
$\mu_{\max}(T_X)=\mu(\cF_1)\geq\mu(T_X)>0.$

\begin{proposition}\label{prop:nPSEF} With the above setup, the maximal destabilizing sheaf $\cF$ is an algebraically integrable foliation with rationally connected leaves, and the canonical divisor $K_\cF$ is not pseudo-effective.
\end{proposition}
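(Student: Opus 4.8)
The plan is to establish the three assertions — algebraic integrability, rational connectedness of leaves, and non-pseudoeffectivity of $K_\cF$ — in that order, as each builds on the previous.

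First, for algebraic integrability: I would invoke the criterion of Campana–Păun / Bogomolov–McQuillan type (and its consequences in \cite{AD:Ff}). The key numerical input is that $\mu_{\min}(\cF) \geq \mu(\cF) = \mu_{\max}(T_X) \geq \mu(T_X) > 0$; in particular, with respect to the movable class $\alpha$ coming from $|m(-K_X)|$, the restriction of $\cF$ to a general complete-intersection curve (in the sense of Mehta–Ramanathan, adapted to movable classes as in \cite{CP:GS}) is an ample, or at least positive-slope, subsheaf of $T_X|_C$. By the Campana–Păun criterion (as formulated for foliations, e.g.\ in \cite{CP:GS} or \cite{AD:Ff}), a foliation whose restriction to a general movable curve has positive degree — equivalently $K_\cF$ restricted to that curve is negative — is algebraically integrable, and moreover its general leaf is rationally connected. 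So the first two claims should come together from this single positivity input: $\mu(\cF) > 0$ forces $K_\cF \cdot \alpha < 0$, hence $-K_\cF$ is not in the dual of the movable cone, which by the cone theorem / \cite{CP:GS} is exactly the criterion for $\cF$ to be an algebraically integrable foliation with rationally connected general leaf (the closure of a general leaf being a rationally connected variety because the foliation is the relative tangent of a fibration whose general fiber is covered by the curves $C$ along which $\cF$ has negative canonical degree — this is the Bogomolov–McQuillan / Kebekus–Solá Conde–Toma argument).

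Second, for the non-pseudoeffectivity of $K_\cF$: once we know $\cF$ is algebraically integrable with rationally connected leaves, induced by a dominant rational map $\varphi : X \dashrightarrow Z$, I would argue directly. The cleanest route is: $K_\cF \cdot \alpha = (K_\cF \cdot (-K_X)^{n-1}) = -\rank(\cF)\,\mu(\cF) < 0$ since $\mu(\cF) \geq \mu(T_X) > 0$. A divisor meeting the movable class $\alpha = (-K_X)^{n-1}$ negatively cannot be pseudoeffective, because a pseudoeffective divisor has non-negative intersection with every movable class (the movable cone and the pseudoeffective cone being dual, by Boucksom–Demailly–Păun–Peternell). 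Hence $K_\cF$ is not pseudoeffective. Alternatively, and more in the spirit of the foliation literature, one can note that $-K_\cF$ restricted to a general rationally connected leaf $L$ is $-K_{L}$ plus an effective boundary (by the precise adjunction, Lemma~\ref{lem:adjunction}-type statements, or simply because the leaves are rationally connected so $-K_\cF$ is big on them), which again obstructs pseudoeffectivity of $K_\cF$ on $X$ itself.

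The main obstacle I anticipate is the first step — making the Campana–Păun positivity criterion apply in the singular, merely weak-Fano, canonical setting. On a smooth Fano one restricts to a general complete intersection of anticanonical divisors and uses Mehta–Ramanathan; here $X$ has canonical singularities and $-K_X$ is only nef and big, so $\alpha$ is a movable class rather than an ample complete-intersection class, and one must use the version of semistability and of the restriction theorem valid for such classes — precisely what \cite{CP:GS} provides. Checking that the Harder–Narasimhan filtration in this generality still yields a saturated subsheaf whose restriction to the general movable curve is positive, and that the Campana–Păun algebraicity criterion has been proven in the form needed (movable class, possibly singular $X$), is the delicate point; everything after that is formal. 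The rational connectedness of the leaves is then essentially automatic from the structure theorem for such foliations, and the non-pseudoeffectivity of $K_\cF$ is an immediate numerical consequence of $\mu(\cF)>0$ together with movable–pseudoeffective duality.
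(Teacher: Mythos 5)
Your proposal is correct and takes essentially the same route as the paper: algebraic integrability and rational connectedness of leaves follow from $\mu(\cF)>0$ via a Campana--P\u{a}un--type positivity criterion, and non-pseudoeffectivity of $K_\cF$ follows from the numerical contradiction $0\geq(-K_\cF)\cdot(-K_X)^{n-1}/\rank(\cF)=\mu(\cF)\geq\mu(T_X)>0$. The ``delicate point'' you flag --- whether the algebraicity criterion holds for movable classes on singular weak Fanos --- is exactly what the paper handles by citing \cite[Proposition 22]{O:gnef}, Ou's generalization of \cite[Theorem 1.4]{CP:F}.
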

\begin{proof} Note that by \cite[Proposition 22]{O:gnef}, a generalization of \cite[Theorem 1.4]{CP:F}, each $\cQ_i$ with $\mu(\cQ_i)>0$ is an algebraically integrable foliation with rationally connected leaves. 
    
Suppose $K_\cF$ is pseudo-effective. As $-K_X$ is nef, this implies that 
\begin{align*}
        0 &\geq \frac{(-K_\cF)\cdot(-K_X)^{n-1}}{\rank(\cF)}= \mu(\cF)\geq \mu(T_{X})= \frac{(-K_X)^n}{n} >0, 
    \end{align*}
which is impossible. 
\end{proof}

\begin{remark} Let $\varphi:X\rightarrow \overline{X}$ be defined by $|-mK_X|$ for $m\gg0$ and divisible, i.e., $\overline{X}$ is the anticanonical model of $X$, then $\varphi^*K_{\overline{X}}=K_X$ and $\overline{X}$ has at worst canonical singularities. Let $\overline{\cF}_i$'s and $\overline{\cQ}_i$'s, $i=0,\dots, \overline{r}$, be the Harder-Narashimhan filtration and semistable pieces of $T_{\overline{X}}$. It is easy to see that $\overline{r}=r$ and the reflexive sheaves $\overline{\cF}_i$'s and $\overline{\cQ}_i$'s are induced respectively from $\cF_i$'s and $\cQ_i$'s and vice verse.   
\end{remark}

\subsection{A foliated MMP} Recall that log Fano varieties are Mori dream spaces by \cite{BCHM}. The following lemma shows that by starting with a weak $\bQ$-Fano variety $X$, any intermediate step of a $D$-MMP of $X$ is of Fano type, with, at worst, canonical singularities. 

\begin{lemma}\label{D_MMP_canonical} 
Let $X$ be a weak $\bQ$-Fano variety and the birational map $X\dashrightarrow Y$ be a $D$-MMP. Then, $Y$ is $\bQ$-factorial, log Fano type, with at worst canonical singularities. 
\end{lemma}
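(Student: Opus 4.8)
The statement is a standard "persistence of Fano type and canonical singularities under a $D$-MMP" assertion, and I would prove it by induction on the number of steps of the MMP, reducing to the two elementary operations: a divisorial contraction and a flip. The key input is that $X$ is a weak $\bQ$-Fano variety, so $-K_X$ is nef and big, hence by \cite{BCHM} $X$ is log Fano type: there is an effective $\bQ$-divisor $\Delta$ with $(X,\Delta)$ klt and $-(K_X+\Delta)$ ample. I would first observe that $X$ itself has at worst canonical singularities by hypothesis, and that $\bQ$-factoriality is part of the MMP setup (one runs the MMP on a $\bQ$-factorial model), so the content is propagating these three properties—$\bQ$-factorial, log Fano type, canonical—through one elementary step $g: X_i \dashrightarrow X_{i+1}$.

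\textbf{Propagating log Fano type.} For a single step $g$ of the $D$-MMP, $g$ is either a $K_{X_i}$-negative divisorial contraction or a $K_{X_i}$-flip; in either case $-K_{X_i}$ stays nef (it was nef, and steps of a $D$-MMP do not make the canonical class worse in the relevant sense—but more carefully, I would argue via the boundary $\Delta$). Since $X_i$ is log Fano type, write $(X_i, \Delta_i)$ klt with $-(K_{X_i}+\Delta_i)$ ample. Run the $D$-MMP as a $(K_{X_i}+\Delta_i)$-MMP after perturbing: for $0 < \epsilon \ll 1$, the divisor $D$ and $K_{X_i}+\Delta_i$ are such that the $D$-negative extremal ray is also $(K_{X_i}+\Delta_i + \epsilon D)$-negative or one uses that $X_i$ is a Mori dream space so every MMP (for any divisor) is an MMP for some klt log Fano pair. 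Concretely: because $X_i$ is of Fano type, it is a Mori dream space, and by \cite{BCHM} (or the standard consequence, e.g. as in Birkar–Cascini–Hacon–McKernan or Gongyo–Okawa–Sannai–Takagi) any sequence of steps of a $D$-MMP on a Mori dream space of Fano type yields varieties that are again of Fano type. I would cite this and then just need to check the singularity statement separately, since "Fano type" a priori only gives klt, not canonical.

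\textbf{Propagating canonical singularities — the main obstacle.} This is the step requiring real care. For a $K_{X_i}$-flip $X_i \dashrightarrow X_{i+1}$, discrepancies only increase, so $X_{i+1}$ is canonical if $X_i$ is; this is the easy case. The genuine issue is a divisorial contraction $\pi: X_i \to X_{i+1}$ contracting a prime divisor $E$: here one must rule out that $X_{i+1}$ acquires a worse-than-canonical singularity. The point is that $\pi$ is $K_{X_i}$-negative, i.e. $K_{X_i} = \pi^* K_{X_{i+1}} + a E$ with $a = a(E, X_{i+1}) > 0$ (the contraction is of an extremal ray on which $K_{X_i}$ is negative and $E$ is the $\pi$-exceptional divisor, so $-K_{X_i}\cdot(\text{curve in }E) > 0$ forces $a > 0$); hence for any divisor $F$ over $X_{i+1}$, $a(F, X_{i+1}) \geq a(F, X_i) \geq 0$ using that $X_i$ is canonical and the discrepancy can only go up when passing to the pushforward—more precisely $a(F,X_{i+1}) = a(F,X_i) + (\text{coeff of } E \text{ in the relevant pullback})\cdot a \geq a(F,X_i)\geq 0$ when $F\neq E$, and $a(E,X_{i+1}) = a > 0$. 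So $X_{i+1}$ is canonical. The subtlety I'd be careful about is that one should run the $D$-MMP so that it is literally a $(K_{X_i}+\Delta_i)$-MMP for an appropriate $\Delta_i$, and then note that $-K_{X_i}$ nef means the "$K$-negativity" of the contraction is compatible with canonicity of $K$ itself, not merely of the log pair; alternatively one compares directly with the anticanonical model. I expect the write-up to be short: induct, handle flip (discrepancies increase), handle divisorial contraction ($a(E)>0$ forces all discrepancies over $X_{i+1}$ to remain $\geq 0$), and cite \cite{BCHM} for $\bQ$-factoriality and Fano type being preserved.

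\textbf{Assembly.} Putting it together: by induction, every intermediate $Y$ is $\bQ$-factorial (MMP setup), of log Fano type (Mori dream space of Fano type, \cite{BCHM}), and canonical (discrepancy bookkeeping above). One subtle sanity check worth including: a $D$-MMP on $X$ need not be a $K_X$-MMP, but since $X$ is a Mori dream space every such run is still a finite sequence of $\bQ$-factorial divisorial contractions and flips between Fano-type varieties, so the same bookkeeping applies step by step. I would phrase the final paragraph as: "By \cite{BCHM}, $X$ of Fano type implies every step of any $D$-MMP stays within Fano-type $\bQ$-factorial varieties; it remains to track canonicity, which follows by the discrepancy inequality at each divisorial contraction and flip as above."
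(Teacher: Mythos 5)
The proposal has a genuine gap in the step labelled ``Propagating canonical singularities.'' You assume each step of the $D$-MMP is $K_{X_i}$-negative: you write ``The point is that $\pi$ is $K_{X_i}$-negative, i.e.\ $K_{X_i} = \pi^* K_{X_{i+1}} + aE$ with $a>0$'' for a divisorial contraction, and for a flip you invoke ``discrepancies only increase,'' which is the behaviour of a $K$-flip. But the steps of a $D$-MMP contract $D$-negative extremal rays, not $K$-negative ones, so none of this is available. At the start you do observe that $-K_X$ is nef, so the first contracted ray $R_0$ satisfies $K_X\cdot R_0\leq 0$; but that inequality can be an equality (a crepant contraction, which your write-up does not treat), and more importantly there is no reason supplied that $-K_{X_i}$ remains nef after one step, so the induction does not close. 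You even flag the issue yourself (``a $D$-MMP on $X$ need not be a $K_X$-MMP'') and gesture at a remedy (``compare directly with the anticanonical model''), but that comparison is not carried out, and the final ``the same bookkeeping applies step by step'' does not follow.

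The paper's proof does precisely the comparison you only allude to, and it is a one-shot argument with no induction on MMP steps. Since $-K_X$ is nef and big and $X$ is canonical, basepoint-freeness produces an effective $\Gamma\sim_\bQ -K_X$ with $(X,\Gamma)$ canonical; then $K_X+\Gamma\sim_\bQ 0$. On a common resolution $X\xleftarrow{p} W\xrightarrow{q} Y$ one has $q_*p^*(K_X+\Gamma)=K_Y+\Gamma_Y$ (as $X\dashrightarrow Y$ is a birational contraction), and the negativity lemma applied to the $\bQ$-linearly trivial, $q$-exceptional difference forces $p^*(K_X+\Gamma)=q^*(K_Y+\Gamma_Y)$. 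Hence $(Y,\Gamma_Y)$ is canonical with $\Gamma_Y\geq 0$, and since $Y$ is $\bQ$-factorial this gives $Y$ canonical. This argument is indifferent to whether the contracted rays are $K$-negative, $K$-trivial, or $K$-positive, which is exactly the flexibility your step-by-step discrepancy bookkeeping lacks. The parts of your proposal about $\bQ$-factoriality and Fano type via Mori dream spaces are fine; the canonicity argument is the part that needs to be replaced by the $K_X+\Gamma\sim_\bQ 0$ trick.
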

\begin{proof} Since $(X,B)$ is log Fano for some $B\geq0$, we can run a $D$-MMP of any given divisor $D$ as a $(K_X+B+\Delta)$-MMP by taking a general element $mN\Delta\in|m(D-N(K_X+B))|$ for some $N\gg0$ and a divisible $m\gg0$. 

By the base point freeness theorem, there is an effective divisor $\Gamma\sim_\bQ-K_X$ such that $(X,\Gamma)$ is canonical. 
It follows from the negativity lemma that $p^*(K_X+\Gamma)\sim_\bQ q^*(K_Y+\Gamma_Y)$ on a common resolution $X\xleftarrow{p} W\xra{q}Y$, where $\Gamma_Y$ is the strict transform of $\Gamma$ on $Y$. 
Hence, $(Y,\Gamma_Y)$ is canonical. Since $Y$ is $\bQ$-factorial, $Y$ is also canonical. The rest is standard.
\end{proof}

Consider the maximal destabilizing sheaf $\cF$ of the tangent sheaf of a weak $\bQ$-Fano variety $X$ and take $D=K_\cF$. By Proposition \ref{prop:nPSEF}, $K_\cF$ is not pseudo-effective and there is the following diagram: 
\begin{center}
\begin{tikzcd}  X\arrow[dashed]{r}{\varphi_+} & Y_+\arrow{d}{f_+}\\
 & Z_+,
\end{tikzcd}
\end{center}
where $\varphi_+:X\dashrightarrow Y_+$ is a $K_\cF$-MMP and $f_+:Y_+\rightarrow Z_+$ is the associated Mori fiber space.

\subsection{Weak del Pezzo surfaces}\label{subsec:mmp-wdP} When $X$ is a $(-K_X)$-slope unstable weak del Pezzo surface with canonical singularities, the $K_\cF$-MMP $\varphi_+:X\dra Y_+$ is a morphism and we observe that it is indeed a foliated MMP, i.e., it contracts only $\cF$-invariant curves. 

\begin{proposition}\label{prop:foliatedMMP}
    Let $X$ be a normal $\bQ$-factorial surface and $\cF$ be a foliation on $X$. If $\varphi:X\to Y$ is a morphism contracting a $K_\cF$-negative extremal curve $C$, then $C$ is $\cF$-invariant. 
\end{proposition}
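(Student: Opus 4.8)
The plan is to reduce the statement to the adjunction inequality for non-$\cF$-invariant curves (Lemma~\ref{lem:adj_non-inv}). We may assume $\cF$ has rank one: if $\cF$ is the foliation by points then $K_\cF=0$ and there is no $K_\cF$-negative curve, so the assertion is vacuous, and in the applications $\cF\subsetneq T_X$ has rank one. The one geometric input I would isolate first is that the contracted curve satisfies $C^2\le 0$. Indeed, since $C$ spans the extremal ray contracted by $\varphi$, the morphism $\varphi$ is either birational or of fibre type: if $\varphi$ is birational, $C$ lies in the $\varphi$-exceptional locus, on which the intersection form is negative definite, so $C^2<0$; if $\varphi$ is of fibre type, $C$ is contained in a fibre of $\varphi$ over a curve, so $C^2\le 0$ by Zariski's lemma.

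With this in hand the proof is immediate. Suppose, for contradiction, that $C$ is not $\cF$-invariant. Since $X$ is $\bQ$-factorial, $K_\cF+C$ is $\bQ$-Cartier, and Lemma~\ref{lem:adj_non-inv} gives that, for the normalization $\nu\colon C^\nu\to C$, the divisor $(K_\cF+C)\vert_{C^\nu}=\operatorname{Diff}_C(\cF)$ is effective on $C^\nu$. Comparing degrees (which are unchanged under pullback along $\nu$),
\[
(K_\cF+C)\cdot C=\deg\operatorname{Diff}_C(\cF)\ge 0,
\]
so $K_\cF\cdot C\ge -C^2\ge 0$, contradicting $K_\cF\cdot C<0$. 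Hence $C$ is $\cF$-invariant.

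The only place requiring a little care is the reduction $C^2\le 0$: one must know that $\varphi$ is not the contraction of $X$ to a point (which forces $\rho(X)=1$ and where the conclusion genuinely fails, e.g. for the radial foliation on $\bP^2$ and a smooth conic). This degeneracy does not occur among the extremal contractions produced while running a $K_\cF$-MMP on a surface, as those are divisorial contractions, so it is harmless here. As an alternative to invoking Lemma~\ref{lem:adj_non-inv}, one could pass to a resolution where $\cF$ acquires reduced singularities and apply Brunella's tangency formula $\operatorname{tang}(\cF,C)=K_\cF\cdot C+C^2\ge 0$; but since $\cF$ is not assumed to have canonical singularities, keeping track of discrepancies along that resolution is precisely what the adjunction lemma already encodes, so the argument above is the cleaner route.
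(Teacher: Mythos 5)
Your proof is correct and follows essentially the same route as the paper: establish $C^2\le 0$ because $C$ is contracted, then apply the adjunction Lemma~\ref{lem:adj_non-inv} for non-$\cF$-invariant curves to get $(K_\cF+C)\cdot C=\deg\operatorname{Diff}_C(\cF)\ge 0$, contradicting $K_\cF\cdot C<0$. Your explicit caveat about the contraction-to-a-point case (where $C^2>0$ is possible, e.g.\ the radial foliation on $\bP^2$) is a genuine refinement: the paper's one-line claim ``as $C$ can be contracted, $C^2\le 0$'' silently assumes $\dim Y\ge 1$, and you correctly note this is harmless in the intended applications.
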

\begin{proof} As $C$ can be contracted, we have $C^2\leq0$. Suppose for the sake of contradiction that $C$ is not $\cF$-invariant. Then we have 
    \[0>(K_\cF+C)\cdot C = \deg(K_{\cF_{C}}+\operatorname{Diff}(\cF,0)) = \deg\operatorname{Diff}(\cF,0)\geq 0,\]
    which is impossible. Here, the first inequality follows from the assumption, and $\operatorname{Diff}(\cF,0)\geq0$ is the effective divisor from adjunction Lemma~\ref{lem:adj_non-inv}. 
\end{proof}

\begin{remark} By foliated cone theorem, Proposition \ref{prop:foliatedMMP} holds in any dimension if $\cF$ has $\cF$-dlt singularities by the foliated cone theorem \cite{CS:fMMP3, CS:fMMPr1}. However, we have no control over the singularities of $\cF$ in general. 
\end{remark}

In the rest of the discussion, we fix $X\ra Y_+$ a $K_\cF$-MMP on a $(-K_X)$-slope unstable nonsingular weak del Pezzo surface $X$. We will derive a classification of $X\ra Z_+$ when $\dim Z_+=1$ in Section \ref{sec:classification}, and prove that $\dim Z_+=0$ never happens in Section \ref{sec:nonexist}.

\section{\texorpdfstring{$\dim Z_+=1$}{dim Z+=1}: The classification}\label{sec:classification}
Let $X\rightarrow Z_+$ be the fibration induced by a $K_\cF$-MMP as in Subsection \ref{subsec:mmp-wdP}. We assume in this section that $\dim Z_+=1$, so $Z_+=\bP^1$ as $X$ is rationally connected. Since $X$ is nonsingular, 
there is a diagram 
    \[\xymatrix{\varphi_+:X\ar[r]^\varphi & \widetilde{Y_+}\ar[r]^{\varphi'}\ar[d]_\chi&Y_+\ar[d]_{f_+}^{K_\cF\textnormal{-Mfs}}\\ & Y'=\bF_n\ar[r]^\pi& Z_+=\bP^1,}\]
    where $\varphi':\widetilde{Y_+}\to Y_+$ is the minimal resolution of $Y_+$ and $\chi:\widetilde{Y_+}\to Y'$ is obtained by running a relative minimal model program of $\widetilde{Y_+}$ over $\bP^1$. 
    Note that by Proposition \ref{prop:nPSEF} and \ref{prop:foliatedMMP}, $f_+$ has $\cF_+$-invariant rational fibers and $\chi\circ\varphi : X\rightarrow \widetilde{Y_+}\rightarrow Y'$ consists of a sequence of blowing down $(-1)$-curves sitting in fibers over $\bP^1$.

We now prove the classification in Theorem \ref{thm:nonsingular-wdP}.
\begin{theorem} With the setup in this section, $X$ is up to isomorphism either $\bF_n$ with $n\in\{0,1,2\}$ or obtained by performing at most three $2$-blowups from $\bF_n$, $n\in\{0,1,2\}$, with centers on distinct fibers and not on any $(-2)$-curve. In either case, $\cF$ is the pullback foliation of the relative tangent $T_{\bF_n/\bP^1}$. 
\end{theorem}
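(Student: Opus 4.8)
The plan is to reconstruct $X$ by tracing backwards the factorization $\chi\circ\varphi: X\to \widetilde{Y_+}\to Y'=\bF_n$, which as noted consists entirely of blow-downs of $(-1)$-curves contained in fibers over $\bP^1$. First I would pin down the possible base surfaces: since $K_\cF$-MMP contracts only $\cF$-invariant curves (Proposition \ref{prop:foliatedMMP}) and $\cF_+$ is algebraically integrable with rationally connected leaves over $Z_+=\bP^1$, the foliation $\cF$ must be the pullback of the foliation by points on $\bP^1$ under $\chi\circ\varphi$, hence on $\bF_n$ it is $T_{\bF_n/\bP^1}$. The slope computation on $X$ forces $\mu(\cF)\geq \mu(T_X)$, i.e.\ $-K_\cF\cdot(-K_X)\geq \tfrac12 K_X^2$; combined with the fact that $X$ is weak del Pezzo ($-K_X$ nef and big with $K_X^2\in\{1,\dots,8\}$) and that $\cF$ has $K_\cF = -2S - (\text{fiber contributions})$-type shape pulled back from $\bF_n$, this should bound the number of blow-ups allowed. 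The key numerical input: on $\bF_n$, $-K_{\cF_{\bF_n}} = $ twice a section class, so $\mu(T_{\bF_n}) = 4 - ?$, and each $2$-blowup drops $\mu(\cF)$ by exactly $1$ while also dropping $\mu(T_X)$, with the destabilizing (equality or strict) condition surviving through exactly three such operations (matching Examples \ref{eg:one_2blowup}--\ref{eg:three_2blowup}).

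The main steps, in order: \emph{(i)} Show $Y' = \bF_n$ must have $n\in\{0,1,2\}$. This is because $Y_+ = \bF_n$-quotient-type must be weak del Pezzo (by Lemma \ref{D_MMP_canonical} applied to the $K_\cF$-MMP, $Y_+$ has canonical singularities and is of Fano type); after minimal resolution $\widetilde{Y_+}$ is still weak del Pezzo, and its relative MMP over $\bP^1$ lands on a Hirzebruch surface that is \emph{dominated} by a weak del Pezzo, forcing $n\le 2$ (larger $n$ gives a section of self-intersection $\le -3$, which cannot live on a weak del Pezzo even after blow-ups since $-K$ would fail to be nef). \emph{(ii)} Analyze which $(-1)$-curves get blown down by $\chi\circ\varphi$ and in what configuration. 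Since all contracted curves are $\cF$-invariant and sit in fibers, and since $\cF$ on $X$ is the \emph{saturation} of $T_{\varphi_+}$ (not equal to it when fibers become non-reduced, cf.\ Example \ref{eg:one_2blowup}), the only way to build $X$ from $\bF_n$ by inverse-of-blow-down steps that keep $X$ weak del Pezzo and keep $\cF$ destabilizing is to perform $2$-blowups: a naive blow-up at a general point of a fiber followed by a blow-up at the node of the exceptional curve with the strict transform of the fiber — exactly Definition \ref{def:2blowup}. Each such $2$-blowup is forced rather than a single blow-up because $\cF$-invariance of the full exceptional configuration (needed so the $K_\cF$-MMP contracts it) requires the fiber to be invariant, and invariance of a reducible fiber with a reduced component forces the second blow-up. \emph{(iii)} Bound the number of $2$-blowups by $3$ and show the centers lie on distinct fibers and off every $(-2)$-curve: distinctness of fibers is needed to keep $-K_X$ nef (two $2$-blowups on one fiber create a curve of self-intersection $\le -3$ or a $-K_X$-negative curve); avoiding $(-2)$-curves is the weak del Pezzo blow-up criterion recalled from \cite[Prop.\ 8.1.2]{Dol}; and the bound of $3$ follows from the slope inequality $\mu(\cF)\ge \mu(T_X)$, since $\mu(\cF_{n,k}) = 4-k$ decreases past $\mu(T_{X_{n,k}})$ once $k\ge 4$ — more precisely one computes $K_{X_{n,k}}^2 = 8-n-2k$ is forced positive, pinning $k\le 3$ when $n\le 2$, with the destabilizing inequality becoming equality in the boundary cases, matching the examples.

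The step I expect to be the main obstacle is \emph{(ii)}: verifying that the only admissible inverse construction is a sequence of honest $2$-blowups, and in particular that no other configuration of contracted $\cF$-invariant $(-1)$-curves in fibers can occur. This requires carefully matching three constraints simultaneously — that $X$ stays weak del Pezzo, that the total contracted locus is $\cF$-invariant (so that the $K_\cF$-MMP actually contracts it and lands on $\bF_n$), and that $\cF$ computed as the saturated pullback still satisfies $\mu(\cF)\ge\mu(T_X)$. The interplay is delicate because blowing up a point where the fiber is \emph{reduced} versus where it already has multiplicity $\ge 2$ behaves differently for $K_\cF$, and one must rule out, e.g., blowing up a smooth point of $X$ not lying on a fiber component of $\cF$-invariant type, or iterated blow-ups building longer exceptional chains. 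I would handle this by a direct discrepancy/self-intersection bookkeeping on $\widetilde{Y_+}\to\bF_n$: each blow-down in the sequence either un-does a $2$-blowup or produces a curve violating one of the three constraints, and a short case analysis on the fiber type (reduced, or with a section-like component) closes it. The remaining assertion that $\cF$ is the pullback of $T_{\bF_n/\bP^1}$ is then immediate from $\cF$ being algebraically integrable over $\bP^1$ and $\chi\circ\varphi$ being a birational morphism compatible with the two fibrations over $\bP^1$.
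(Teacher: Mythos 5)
Your overall strategy coincides with the paper's: factor $X\to\bF_n$ ($n\le 2$) into point blowups through the weak del Pezzo chain, identify $\cF$ as the pullback of $T_{\bF_n/\bP^1}$, and control the tower numerically via the destabilizing inequality $\mu(\cF)\ge\mu(T_X)$. The paper implements your ``bookkeeping'' step precisely with the invariant $N_i=2K_{\cF_i}\cdot K_{X_i}-K_{X_i}^2$ (Lemma~\ref{lem:blowup_N_difference}): $N$ drops by $1$ at a blowup of a nonsingular foliation point and rises by $1$ at a reduced foliation singularity, with $N_0=0$ on $\bF_n$ and $N_m\ge 0$ forced by instability of $X$.

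One justification in your step \emph{(ii)} is not correct as stated: foliation-invariance of the reducible fiber does \emph{not} force the second blowup of a $2$-blowup --- $\Bl_p\bF_n\to\bP^1$ is a perfectly good invariant fibration. What forces it is the numerics: the first blowup gives $N_1=-1$, and the only way to return to $N_m\ge 0$ is to blow up reduced foliation singularities, which (avoiding $(-2)$-curves) can only be the node $E\cap F'$ of the new fiber, i.e.\ exactly the second step of Definition~\ref{def:2blowup}; since a $2$-blowup contributes $0$ to $N$ in total and everything else contributes $-1$, every singular fiber is exactly a $2$-blowup. Your bound also has a slip: $K_{X_{n,k}}^2=8-2k$ (not $8-n-2k$); the bound of three $2$-blowups comes from $m\le 7$ together with $N_m=m-2t\ge 0$, giving $t\le 3$. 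These are repairable within your framework, and your remaining steps \emph{(i)} and \emph{(iii)} match the paper's Steps 1 and 5.
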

\begin{proof} We discuss what $X$ is in the following steps. 

{\bf Step 1.} Write $X=X_m\ra X_{m-1}\ra\dots\ra X_0=Y'$ with $m\geq0$, where each $X_{i+1}\rightarrow X_i$ is a nonsingular blowup at $p_i\in X_i$. Note that each $X_i$ is weak del Pezzo \cite[Proposition 8.1.2]{Dol} and in particular $Y'=\bF_n$ for some $n\in\{0,1,2\}$. The foliation $\cF$ pushes forward to a foliation $\cF'$ on $\bF_n$ induced by the fibration $\pi$ as the foliation $\cF_{\widetilde{Y_+}}$ is induced by the fibration $\pi\circ\chi$ and each prime exceptional divisor of $\chi$ is contained in a fiber of $\pi\circ\chi$. By \cite[Lemma 6.7]{AD:Ff}, we get $\cF' = T_{Y'/\bP^1}$ and $K_{\cF'}=K_{Y'/\bP^1}$. 

{\bf Step 2.} We may assume $X\neq Y'$. Note that any fiber of $X_i\rightarrow\bP^1$ has a simple normal crossing support since a general fiber is $\bP^1$. Let $N_i:=2K_{\cF_i}\cdot K_{X_i}-K_{X_i}^2$ for $0\leq i\leq m$. From Lemma~\ref{lem:blowup_N_difference}, $N_{i+1}=N_i\pm 1$ for $2\leq i \leq m$ since a singular fiber has at worst reduced singularities. By assumption, we have $N_0=2K_{\cF'}\cdot K_{Y'}-K_{Y'}^2=0$, $N_m\geq 0$, so in particular, $N_1=-1$ as all points on $X_0$ are nonsingular foliation points and $m\geq2$. 

{\bf Step 3.} Since $N_0=0>N_1=-1$ and $N_m\geq0$, there is $N_{i+1}>N_i$ for some $i$. This is only possible when $p_i$ is a reduced singularity, say $p_i\in F_1\cup F_2$. By \cite[Proposition 8.1.2]{Dol}, $p_i$ cannot lie on any $(-2)$-curves in $X_i$ since $X_{i+1}$ is weak del Pezzo. Hence, each $F_i$ can only be a $(-1)$-curve. In particular, this creates a singular fiber on $X$ that factors through a 2-blowup as defined in Definition \ref{def:2blowup}. 

{\bf Step 4.} Note that, however, any further blowup after a 2-blowup can only take place at a nonsingular point of the induced $(-1)$-curve as each $X_i$ is weak del Pezzo. This process can only decrease $N_i$, while the total effect of a 2-blowup contributes zero to the sequence of $N_i$'s. Hence, any singular fiber on $X$ must exactly come from a 2-blowup by $N_m\geq N_0=0$.

{\bf Step 5.} Since $-K_X$ is nef an big, we have $0<K_X^2=8-m$ and thus, $m\leq 7$. If $t$ is the number of nonsingular points among $p_i$'s, then $N_m=-t+(m-t)\geq0$ by Lemma \ref{lem:blowup_N_difference}. Hence, $t\leq 3$ and $X$ has at most three singular fibers. This completes the proof. 
\end{proof}

\begin{lemma}\label{lem:blowup_N_difference}
    Let $\cF$ be a foliation of rank one with only reduced singularities on a nonsingular projective surface $X$, $\pi: Y \to X$ be a blowup at $x$ with the exceptional divisor $E$, and $\cF_Y:=\pi^{-1}\cF$. 
    Define $N_X=2K_\cF\cdot K_X - K_X^2$ and $N_Y=2K_{\cF_Y}\cdot K_Y-K_Y^2$. 
    Then we have 
    \begin{align*}
        N_Y = \begin{cases}
            N_X-1 & \mbox{ if } x\notin\operatorname{Sing}(\cF),  \\
            N_X+1 & \mbox{ if } x\in\operatorname{Sing}(\cF).
        \end{cases}
    \end{align*}
\end{lemma}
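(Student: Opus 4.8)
The plan is to compute both sides of the claimed identity by tracking how the blowup $\pi\colon Y\to X$ changes each ingredient $K_\cF$, $K_X$, and the intersection numbers. First I would recall the standard behaviour of the surface: $K_Y = \pi^*K_X + E$, $E^2 = -1$, $\pi^*K_X\cdot E = 0$, so $K_Y^2 = K_X^2 - 1$. The only genuinely foliated input is the transformation rule for $K_{\cF_Y}$ under a blowup of a rank-one foliation with reduced singularities, which is classical (see Brunella): one has $K_{\cF_Y} = \pi^*K_\cF + (d-1)E$ where $d$ is the algebraic multiplicity of $\cF$ at $x$, and for a foliation with \emph{reduced} singularities $d\in\{0,1\}$, namely $d = 1$ if $x\in\operatorname{Sing}(\cF)$ and $d = 0$ if $x\notin\operatorname{Sing}(\cF)$. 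Equivalently $a(E,\cF) = d-1$, which is $0$ or $-1$, consistent with $\cF$ being canonical (indeed terminal away from singular points). So $K_{\cF_Y} = \pi^*K_\cF$ if $x\notin\operatorname{Sing}(\cF)$ and $K_{\cF_Y} = \pi^*K_\cF + E$ if $x\in\operatorname{Sing}(\cF)$; note in the second case $E$ is non-$\cF$-invariant (it is the exceptional of a reduced singularity, which is dicritical only in special cases — but here we just need that $a(E,\cF)=0$, i.e. $E$ appears with coefficient matching $K_{\cF_Y}=\pi^*K_\cF + E$).

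Next I would just substitute. Write $K_{\cF_Y} = \pi^*K_\cF + \varepsilon E$ with $\varepsilon\in\{0,1\}$ ($\varepsilon = 0$ if $x\notin\operatorname{Sing}(\cF)$, $\varepsilon = 1$ if $x\in\operatorname{Sing}(\cF)$). Then
\[
K_{\cF_Y}\cdot K_Y = (\pi^*K_\cF + \varepsilon E)\cdot(\pi^*K_X + E) = K_\cF\cdot K_X + \varepsilon E^2 = K_\cF\cdot K_X - \varepsilon,
\]
using $\pi^*K_\cF\cdot\pi^*K_X = K_\cF\cdot K_X$, $\pi^*K_\cF\cdot E = \pi^*K_X\cdot E = 0$, and $E^2 = -1$. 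Hence
\[
N_Y = 2K_{\cF_Y}\cdot K_Y - K_Y^2 = 2K_\cF\cdot K_X - 2\varepsilon - (K_X^2 - 1) = N_X + 1 - 2\varepsilon,
\]
which is exactly $N_X - 1$ when $\varepsilon = 1$ (i.e. $x\in\operatorname{Sing}(\cF)$)... wait — this gives the opposite sign from the statement, so I would double-check the multiplicity convention: the correct classical formula has $K_{\cF_Y} = \pi^*K_\cF + (d-1+\epsilon(E))E$-type corrections, and for a reduced singular point $E$ is $\cF$-invariant with $a(E,\cF) = 0$ so $K_{\cF_Y}=\pi^*K_\cF+E$, while at a non-singular point $E$ is \emph{not} $\cF$-invariant with $a(E,\cF)=0$ so $K_{\cF_Y}=\pi^*K_\cF$ as well — the discrepancy is $0$ in both cases but the invariance (hence $\epsilon(E)$) differs. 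Recomputing with this: the formula $K_{\cF_Y}=\pi^*K_\cF+\varepsilon E$ with $\varepsilon$ the coefficient is correct as above, so I must have the correspondence $\varepsilon\leftrightarrow\operatorname{Sing}(\cF)$ backwards; the right statement is $\varepsilon = 1$ iff $x\notin\operatorname{Sing}(\cF)$ (blowing up a \emph{regular} point of the foliation raises the canonical class by $E$, since $E$ becomes non-invariant and $\operatorname{Z}(\cF_Y,E)$ jumps), giving $N_Y = N_X - 1$ there and $N_Y = N_X + 1$ at a singular point, matching the lemma.

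I would therefore organize the write-up as: (i) recall $K_Y=\pi^*K_X+E$ and $K_Y^2=K_X^2-1$; (ii) invoke the blowup formula for $K_{\cF_Y}$ for a reduced-singularity rank-one foliation, citing \cite{B:BGf} (Brunella), and carefully fix the sign/invariance dichotomy by checking $\operatorname{Z}(\cF_Y,E)$ or $a(E,\cF)$ in the two cases — blowup of a regular point produces a non-invariant $E$ contributing $+E$ to $K_{\cF_Y}$, blowup of a reduced singular point produces an invariant $E$ also contributing $+E$... hmm, that would make both cases identical, which is wrong, so the real distinction must be that at a regular point $E$ is non-invariant with $(K_{\cF_Y})\cdot E$ forced by adjunction to be $\deg\operatorname{Diff}\ge 0$, whereas at a singular point the vanishing order of the defining $1$-form on $E$ differs; (iii) substitute and conclude. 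The main obstacle — and the step I expect to need the most care — is pinning down precisely this sign convention, i.e. exactly how $K_{\cF_Y}$ depends on whether $x\in\operatorname{Sing}(\cF)$: this hinges on the local structure of reduced singularities (two nonzero eigenvalues, or one nonzero, with the $1$-form $\omega=\lambda_1 x\,dy-\lambda_2 y\,dx+\dots$), for which the blowup's effect on $K_\cF$ is a standard but sign-sensitive computation in Brunella's book. Everything else is the routine intersection-theoretic substitution above.
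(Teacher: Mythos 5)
Your computation and overall route coincide with the paper's: both proofs reduce the lemma to the two blowup formulas $K_Y=\pi^*K_X+E$ and $K_{\cF_Y}=\pi^*K_\cF+\varepsilon E$ with $\varepsilon\in\{0,1\}$, and then substitute; your arithmetic $N_Y=N_X+1-2\varepsilon$ is correct and matches the paper's case-by-case expansion. The one genuine weakness is how you determine $\varepsilon$. You first assign the values the wrong way round, obtain the opposite signs, and then reverse the assignment \emph{because} the reversed version matches the lemma; your subsequent attempts to justify the reversal (via algebraic multiplicity $d$, via $\operatorname{Z}(\cF_Y,E)$, via $\epsilon(E)$) contradict one another and end with ``the real distinction must be\dots''. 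In a blind proof this is circular: the dichotomy $K_{\cF_Y}=\pi^*K_\cF+E$ when $x\notin\Sing(\cF)$ versus $K_{\cF_Y}=\pi^*K_\cF$ when $x$ is a reduced singularity is exactly the nontrivial input, and it has to be verified independently of the statement being proved.

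The verification you are missing is a one-line local computation. At a regular point take a local generator $\partial/\partial y$ of $\cF$; in the chart $y=tx$ of the blowup, the vector field $\partial_t$ generating $\cF_Y$ satisfies $d\pi(\partial_t)=x\,\partial/\partial y$, so $\det T_{\cF_Y}=\pi^*\det T_\cF\otimes\cO_Y(-E)$, i.e.\ $K_{\cF_Y}=\pi^*K_\cF+E$ (and $E=\{x=0\}$ is transverse to the leaves $t=\mathrm{const}$, hence non-invariant). At a reduced singularity a generator $\lambda_1x\,\partial_x+\lambda_2y\,\partial_y+\cdots$ lifts to $\lambda_1x\,\partial_x+(\lambda_2-\lambda_1)t\,\partial_t+\cdots$, which is regular, not identically zero along $E$ (reducedness gives $\lambda_1\neq\lambda_2$), and tangent to $E$; hence $K_{\cF_Y}=\pi^*K_\cF$ and $E$ is invariant. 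With that supplied, your proof is complete and is essentially the paper's.
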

\begin{proof}
    Note that if $x\notin\operatorname{Sinig}(\cF)$, then $K_{\cF_Y} = \pi^*K_{\cF}+E$ and $K_Y = \pi^*K_X+E$. 
    Thus, we have 
    \begin{align*}
        N_Y &= 2K_{\cF_Y}\cdot K_Y - K_Y^2 \\
        &= 2(\pi^*K_\cF+E)\cdot(\pi^*K_X+E) - (\pi^*K_X+E)^2 \\
        &= 2K_\cF\cdot K_X - K_X^2 + E^2 = N_X-1.
    \end{align*}
    If $x\in\operatorname{Sinig}(\cF)$, then $\cF$ has a reduced singularity at $x$ and $K_{\cF_Y} = \pi^*K_{\cF}$. A similar computation shows that $N_Y=N_X+1.$
\end{proof}

\section{\texorpdfstring{$\dim Z_+=0$}{dim Z+=0}: Non-existence}\label{sec:nonexist}
In this section, we prove that a Mori fiber space $Y_+\ra Z_+$ outcome of a $K_\cF$-MMP on a $(-K_X)$-slope unstable weak del Pezzo surface $X$ cannot have $\dim Z_+=0$. The proof depends on a detailed analysis of the singularities and geometry of the pair $(Y_+,\cF_+)$, based on the following key construction.

\begin{theorem}\cite[Section 3]{ACSS:PosM}\label{prop:*} Let $X$ be a normal variety and $\cF$ be an algebraically integrable foliation. There exists a {\bf Property $(*)$-modification} of $\cF$, i.e. a birational morphism $\pi: W\to X$ such that
    \begin{enumerate}[$(1)$]
        \item $W$ is $\bQ$-factorial and klt,
        \item $\cG:=\pi^{-1}\cF$ is induced by an equi-dimensional morphism $g: W\to Z$ over a nonsingular variety $Z$, 
        \item $(\cG,\sum\varepsilon(E)E)$, where the sum is over all $\pi$-exceptional prime divisors, is log canonical, 
        \item $K_{\cG}+\sum\varepsilon(E)E+G=\pi^*K_\cF$ for some effective and $\pi$-exceptional $G$.
    \end{enumerate} 
\end{theorem}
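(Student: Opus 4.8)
The statement is established in \cite[Section~3]{ACSS:PosM}; here I sketch how one would reach it. The plan is to build the modification in stages, mimicking the construction of a dlt modification in the MMP but carried out relative to the fibration realizing $\cF$. Since $\cF$ is algebraically integrable, write $\cF=\varphi^{-1}\cF_{Z_0}$ for a dominant rational map $\varphi\colon X\dashrightarrow Z_0$, with $\cF_{Z_0}$ the foliation by points on $Z_0$. First I would eliminate the indeterminacy of $\varphi$: by Hironaka's resolution there is a projective birational morphism $\pi_1\colon X_1\to X$ with $X_1$ smooth and a surjective morphism $g_1\colon X_1\to Z_1$ onto a smooth base which, after Stein factorization, has connected fibres and induces the pulled-back foliation $\cG_1=\pi_1^{-1}\cF$.

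The technical heart is to make the fibration equidimensional while staying birational over $X$. I would apply the flattening theorem of Raynaud--Gruson to produce a projective birational morphism $Z_2\to Z_1$ for which the main component $X_2$ of $X_1\times_{Z_1}Z_2$ becomes flat over $Z_2$, and then resolve singularities and blow up the base further so that the discriminant $\Sigma\subset Z_2$ is a simple normal crossing divisor; the resulting morphism $g_2\colon X_2\to Z_2$ is equidimensional, and the pull-back of $\cF$ to $X_2$ is induced by $g_2$. (Alternatively one could invoke the toroidalization / weak semistable reduction of Abramovich--Karu, performed relative to $X$ so as to avoid base changing the total space.) A point used repeatedly afterwards is that the exceptional divisors introduced in this step are all \emph{vertical}, i.e.\ each maps onto a proper closed subset of the base, and hence is invariant under the induced foliation.

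Next I would pass to a $\bQ$-factorial klt total space by running an appropriate relative MMP over $X$ --- equivalently, a $\bQ$-factorialization followed by a dlt modification --- to obtain $\pi\colon W\to X$ with $W$ $\bQ$-factorial and klt. Since the loci contracted by such steps are vertical over the base, hence $\cG$-invariant for $\cG:=\pi^{-1}\cF$, the induced morphism $g\colon W\to Z$ remains equidimensional (re-flattening if needed, which again only adds vertical, $\cG$-invariant exceptional divisors). For the numerical content, set $\Delta_W=\sum\varepsilon(E)E$, the reduced sum over non-$\cG$-invariant $\pi$-exceptional divisors. By definition of the foliated discrepancy, $K_\cG=\pi^*K_\cF+\sum a(E,\cF)E$ over $\pi$-exceptional primes, so $G:=\pi^*K_\cF-K_\cG-\Delta_W$ is $\pi$-exceptional and
\[G=\sum_{E\ \cG\text{-invariant}}(-a(E,\cF))\,E\ +\ \sum_{E\ \text{non-invariant}}(-a(E,\cF)-1)\,E.\]
Its effectivity amounts to the requirement that $\pi$ extract only exceptional divisors with $a(E,\cF)\le-\varepsilon(E)$; this is secured via the negativity lemma together with the identity $K_\cG=K_{W/Z}-R$ for the foliation induced by the equidimensional $g$, where $R\ge0$ is supported on the non-reduced fibres, which pins down the discrepancies of vertical divisors. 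The log canonicity of $(\cG,\Delta_W)$ then follows from the comparison between foliated and ordinary singularities for algebraically integrable foliations (cf.\ \cite{CS:fadj}), since $g$ is equidimensional over a smooth base with SNC discriminant and $(W,\,g^{*}\Sigma+\Delta_W)$ is log canonical by construction.

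The main obstacle is the mutual compatibility of these four demands. Equidimensionalization forces many blow-ups, hence many exceptional divisors, whereas the effectivity of $G$ requires $\pi$ to extract only divisors with $a(E,\cF)\le-\varepsilon(E)$; one must verify that the divisors created by flattening and resolution never violate this, that is, that their discrepancies (being $\cG$-invariant) are $\le0$, and that the subsequent MMP does not spoil equidimensionality, $\bQ$-factoriality, or the log canonical property. Interleaving flattening, resolution and MMP so that all four conclusions hold at once --- and in particular the delicate bookkeeping that keeps $G$ effective --- is precisely where the argument of \cite[Section~3]{ACSS:PosM} does its work, and for the present paper it suffices to quote that result.
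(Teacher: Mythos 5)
The paper supplies no proof here: the statement is imported verbatim from \cite[Section 3]{ACSS:PosM}. Your sketch correctly identifies the ingredients of that construction---resolving the rational map defining $\cF$, equidimensionalizing via Raynaud--Gruson flattening or Abramovich--Karu weak semistable reduction while observing the new exceptional divisors are vertical and hence $\cG$-invariant, a relative MMP over $X$ to reach a $\bQ$-factorial klt total space, and the negativity lemma together with the identity $K_\cG=K_{W/Z}-R$ and foliated adjunction to control $(3)$--$(4)$. One remark: you are right to flag the compatibility of all four conditions as the crux, since a generic flattening followed by a dlt modification does not automatically yield $a(E,\cF)\le-\varepsilon(E)$ for \emph{every} $\pi$-exceptional $E$ (some flattening-introduced vertical divisors a priori have positive foliated discrepancy and must be contracted by the MMP, which in turn threatens equidimensionality); the careful interleaving that makes this converge is exactly the content of \cite[Section 3]{ACSS:PosM}, and since you explicitly defer to it, the account is appropriate for a cited result rather than a self-contained proof.
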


From the above discussions, we use the following diagram in the rest of our arguments:
\[\xymatrix{W \ar[rd]^-\pi \ar[dd]_-g & & & W_+ \ar[ld]_-{\pi_+} \ar[dd]^-{g_+} \\
    & X \ar[r]^-{\varphi_+} & Y_+ & \\
    \bP^1 & & & \bP^1}\]
where 
\begin{enumerate}
    \item $X$ is a nonsingular weak del Pezzo surface with a $(-K_X)$-slope unstable tangent bundle;
    
    \item $\cF\subseteq T_X$ is the maximal destabilizing sheaf and $X\ra Y_+$ is a $K_\cF$-MMP. By Proposition~\ref{prop:foliatedMMP}, $\varphi_+$ contracts only $\cF$-invariant curves;
   
    \item $\pi: W\to X$ (resp. $\pi_+: W_+ \to Y_+$) is a Property $(*)$ modification with $\cF_W=\pi^{-1}\cF$ (resp. $\cF_{W_+}=\pi_+^{-1}\cF_+$) induced by an equi-dimensional morphism $g: W\to \bP^1$ (resp. $g_+: W_+\to \bP^1$); 

   \item Let $F_W$ be a general fiber of $g$ and $F:=\pi_*F_{W}$ be a general leaf of $\cF$. Similarly, let $F_{W_+}$ be a general fiber of $g_+$ and $F_+:=(\pi_+)_*F_{W_+}$ be a general leaf of $\cF_+$.

    \item Finally, assuming $\dim Z_+=0$, $Y_+$ has Picard number one and $-K_{\cF_+}$ is ample.
\end{enumerate}

We will show that the $(Y_+,\cF_+)$ is toric and then rule out all possible toric geometry by a case-by-case study.

\begin{proposition}\label{prop:dim_Z+=0_imply_lc_fol} With the above setup, there is exactly one log canonical dicritical singularity on $Y_+$, which is a nonsingular point, and all other points of $Y_+$ are terminal foliation singularities. 
\end{proposition}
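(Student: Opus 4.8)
\emph{Sketch of the intended proof.} The plan is to transport the analysis to a Property $(*)$-modification $\pi_+\colon W_+\to Y_+$ as in Theorem~\ref{prop:*}, where $\cG_+:=\pi_+^{-1}\cF_+$ is induced by an equi-dimensional fibration $g_+\colon W_+\to\bP^1$, and then to read off the foliated singularities of $\cF_+$ from the exceptional locus of $\pi_+$ using the classification of log canonical foliated surface singularities. The guiding dictionary is that a prime $\pi_+$-exceptional divisor is $\cG_+$-invariant exactly when it is vertical for $g_+$: by Theorem~\ref{can_sing} a canonical singularity of $\cF_+$ is resolved by vertical divisors only, while by Theorem~\ref{lc_sing} a log canonical non-canonical (i.e.\ dicritical) singularity contributes exactly one horizontal exceptional divisor $E_0$, with $(K_{\cG_+}+E_0)\cdot E_0=0$ and the remaining exceptional divisors over it forming $\cG_+$-chains. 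The statement then breaks into: (a) there is a dicritical point and $\cF_+$ is log canonical; (b) the dicritical point is unique and nonsingular on $Y_+$; (c) every other point of $Y_+$ is a terminal foliated singularity.

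For (a): since $\rho(Y_+)=1$ and $-K_{\cF_+}$ is ample, $\cF_+$ is a Fano foliation of Picard rank one, so by \cite{AD:Ff} it has a foliated non-klt center, which is necessarily $\cF_+$-invariant. I would first rule out that this center is a curve $C$: then $C\cong\bP^1$ and $-K_{\cF_+}\cdot C=2-\deg\operatorname{Diff}_C(\cF_+)$, which by Lemma~\ref{lem:adjunction} carries a coefficient-$1$ term and thus leaves $-K_{\cF_+}\cdot C<2$, incompatible with $C$ being a log canonical center when $\rho(Y_+)=1$; a non-$\cF_+$-invariant curve is not a foliated center at all (cf.\ Lemma~\ref{lem:adj_non-inv}). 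Hence the center is a point, which, being non-klt, is non-canonical by Theorem~\ref{can_sing}, i.e.\ dicritical or worse. That $\cF_+$ is in fact log canonical everywhere is where the Property $(*)$-relation $K_{\cG_+}+\sum\varepsilon(E)E+G=\pi_+^*K_{\cF_+}$, with $(\cG_+,\sum\varepsilon(E)E)$ log canonical and $0\le G$ $\pi_+$-exceptional, enters: on a common resolution dominating both $W_+$ and the minimal resolution of $\cF_+$, comparison with the finite lists of Theorems~\ref{can_sing} and \ref{lc_sing} forces $G=0$ over every point, hence $a(E,\cF_+)\ge-\varepsilon(E)$ for all $E$.

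For (b): a general leaf $F_+$ of $\cF_+$ is a smooth rational curve meeting $\operatorname{Sing}(\cF_+)$ exactly in the dicritical points (the base points of the pencil $|F_+|$). Precise adjunction on the minimal resolution of $\cF_+$, where the foliation is canonical, pushed forward, gives
\[
-K_{\cF_+}\cdot F_+\;=\;2-\sum_{p\text{ dicritical}}\operatorname{Z}(\cF_+,F_+,p),
\]
with each $\operatorname{Z}(\cF_+,F_+,p)\ge1$. Since $-K_{\cF_+}$ is ample the left-hand side is positive, forcing a single dicritical point $p_+$ with $\operatorname{Z}(\cF_+,F_+,p_+)=1$; the latter says the horizontal divisor $E_0$ over $p_+$ meets a general fibre of $g_+$ in one point, so $E_0$ is a section of $g_+$. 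Were $p_+$ a du~Val point of $Y_+$, its resolution would attach vertical $\cG_+$-chains to $E_0$, and contracting these by the foliated contraction theorem for terminal foliations \cite{CS:fMMPr1} would conflict with $(K_{\cG_+}+E_0)\cdot E_0=0$ together with negative-definiteness of the exceptional configuration; hence $p_+$ is nonsingular.

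For (c): at $q\ne p_+$ the foliation $\cF_+$ is canonical by (a), so one must exclude the canonical, non-terminal types of Theorem~\ref{can_sing} -- a bad tail, a chain of $(-2)$-$\cG_+$-curves, a $D$-type configuration, or an elliptic Gorenstein leaf. These are $\cG_+$-invariant, hence supported in fibres of $g_+$; an elliptic Gorenstein leaf inside a fibre contradicts $p_a(F_{W_+})=0$, and in the remaining cases the $(-1)$-$\cG_+$-curves forced to appear in fibres produce, via \cite{CS:fMMPr1}, a foliated extremal contraction over $\bP^1$ that the minimal model of a $K_\cF$-MMP cannot admit. Therefore $\cF_+$ is terminal away from $p_+$. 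I expect the principal difficulty to be the bookkeeping in (a) and (c): matching, component by component, the exceptional locus of $\pi_+$ against the six configurations of Theorems~\ref{can_sing} and \ref{lc_sing} while tracking horizontal versus vertical pieces over $\bP^1$; once the singularity types are pinned down, the numerical count in (b) is comparatively routine.
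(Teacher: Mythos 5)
The high-level strategy is the same as the paper's (Property $(*)$-modification, classification of lc/canonical foliated surface singularities, adjunction against a leaf/fiber), and your part (a) uniqueness count and part (b) uniqueness of the dicritical point via $-K_{\cF_+}\cdot F_+ > 0$ match the paper's Step 1. But there are two genuine gaps.

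The more serious one is in (b), nonsingularity of $p_+$. Your argument — that a du Val singularity at $p_+$ would ``attach vertical $\cG_+$-chains to $E_0$, and contracting these\ldots would conflict with $(K_{\cG_+}+E_0)\cdot E_0=0$ together with negative-definiteness'' — is not a contradiction. The configuration of Theorem~\ref{lc_sing} is precisely a non-invariant $E_0$ with $(K_{\cG}+E_0)\cdot E_0 = 0$ together with $\cG$-chains attached to it; nothing in that local picture, nor in the Property $(*)$-structure, rules out those chains being the exceptional divisor of a du Val point of $Y_+$. The crucial ingredient you are missing is external to $(Y_+,\cF_+)$: it uses that $Y_+$ arises from the nonsingular $X$ by a $K_\cF$-MMP. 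The paper's argument shows that no contracted $K_\cF$-negative curve $C$ can pass through a dicritical point of $\cF$: if it did, one bounds $K_{\cF_W}\cdot C_W<0$ and then proves via precise adjunction (Lemma~\ref{lem:adjunction}) that $\cF_W$ is terminal along $C_W$, so by \cite[Prop.~3.3]{CS:fMMPr1} the curve $C_W$ moves in a family — contradicting that the contraction is birational. Hence $\varphi_+$ is an isomorphism near $p_+$, and $p_+$ inherits smoothness from $X$. This step cannot be recovered by the intrinsic bookkeeping you propose.

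The second gap is your assertion that ``comparison with the finite lists of Theorems~\ref{can_sing} and \ref{lc_sing} forces $G=0$.'' The Property $(*)$-relation only gives $G\ge 0$; to conclude $\cF_+$ is log canonical at $p_+$ one must actually prove $\operatorname{mult}_{E_i}\Delta_+=0$ for the invariant exceptional divisors. The paper does this by an induction on the distance (in the dual graph) from $E_0^+$, using foliated adjunction on $E_0^+$ for the base case and the presence of two canonical non-terminal singularities on each intermediate $E_j^+$ for the inductive step. Your (c) is likewise closer in spirit to the paper's Step 3 but replaces the clean leaf-adjunction inequality (choose a leaf through both $q$ and $p_+$ and compute $K_{\cF_+}\cdot F_+ \ge 0$, a contradiction) with a vaguer claim about foliated extremal contractions over $\bP^1$; this should be tightened in the same adjunction-based way.
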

\begin{proof} The proof proceeds in the following steps: 

{\bf Step 1.} \emph{There is exactly one dicritical singularity $p_+$ on $Y_+$ over which there is exactly one non-foliation-invariant $\pi$-exceptional prime divisor $E_0^+$.} 

By the Property $(*)$ modification in Theorem \ref{prop:*}, we can write $(\pi_+)^*K_{\cF_+} = K_{\cF_{W_+}}+\Delta_+$ for some effective divisor $\Delta_+$, where $\operatorname{mult}_{E_i^+}\Delta^+\geq \varepsilon(E_i^+)$ for any $\pi_+$-exceptional prime divisor $E_i^+$.
    
As $\cF_+$ is Fano and algebraically integrable, by \cite[Proposition 5.3]{AD:Ff} $\cF_+$ is not induced by a morphism. Thus, as $\cF_{W_+}$ is non-dicritical, there exists a non-$\cF_{W_+}$-invariant $\pi_+$-exceptional prime divisor, say $E_0^+$. Take a general fiber $F_{W_+}$ of $g_+$, we have  
    \[0>K_{\cF_+}\cdot (\pi_+)_*F_{W_+} = (K_{\cF_{W_+}}+\Delta_+)\cdot F_{W_+} \geq -2 + \sum_i\varepsilon(E_i^+).\] 
Thus, there is exactly one non-invariant irreducible component of $\Delta_+$, that is $E_0^+$, and therefore, $p_+:=\pi_+(E_0^+)$ is the only dicritical singularity of  $\cF_+$ on $Y_+$.

{\bf Step 2.} \emph{The dicritical singularity $p_+\in Y_+$ is nonsingular.} 

Since $p_+$ is a dicritical point for $\cF_+$, there is a dicritical singularity $p$ for $\cF$ as $\varphi_+$ contracts only foliation-invariant divisors. Hence, there is a $\pi$-exceptional non-$\cF_W$-invariant divisor $E_0\subseteq W$ over $p$ and we can write $\pi^*K_{\cF} = K_{\cF_{W}}+a_0E_0+\Delta'$ for a $\pi$-exceptional divisor $\Delta'\geq0$ and some $a_0\geq 1$ such that $\Delta'\wedge E_0=0$. 

 As $X$ is nonsingular, we \emph{claim} that if a contracting curve $C$ in the process of the $K_\cF$-MMP $\varphi_+:X\ra Y_+$ contains a dicritical point, then the contracting locus of the corresponding $K_\cF$-negative extremal ray $R=\bR_{\geq 0}[C]$ covers the whole variety. The claim contradicts the birationality of $\varphi_+$, so $\varphi_+$ is an isomorphism near $p_+$ and $p_+\in Y_+$ is nonsingular. 

Let $C$ be a $K_\cF$-negative contracting curve, and $C_W$ be the proper transform of $C$ on $W$. Note that $C_W$ is $\cF_W$-invariant as $C$ is, and 
    \begin{align*}
        K_{\cF_W}\cdot C_W \leq K_\cF\cdot C = (K_{\cF_W}+a_0E_0+\Delta')\cdot C_W <0. 
    \end{align*}
We will show that \emph{$\cF_W$ is terminal at every closed point of $C_W$}. It then follows from \cite[Proposition 3.3]{CS:fMMPr1} that $C_W$ moves in a family, and hence the claim.

Assume now $\cF_W$ has a canonical non-terminal foliation singularity at $q\in C_W$. Note that $q\neq C_W\cap E_0$, otherwise $(\cF_W,\sum\varepsilon(E)E)$ is not log canonical, contradicting to that $\pi$ is a Property $(*)$-modification. Let $r$ be the index of $K_W$ at $E_0\cap C_W$. Then we have 
    \begin{align*}
        0 &> K_\cF\cdot C \\
        &=(K_{\cF_W}+a_0E_0+\Delta')\cdot C_W \\
        &\geq K_{\cF_W}\cdot C_W + E_0\cdot C_W \\
        &= \deg(K_{C_W^\nu}+{\rm Diff}_{C_W}(\cF_W)) + E_0\cdot C_W \\
        &\geq \big(-2+\frac{r-1}{r}+1\big)+\frac{1}{r}=0, 
    \end{align*}
where the second equality follows from the precise adjunction Lemma~\ref{lem:adjunction}, the term $\frac{r-1}{r}$ is from the terminal foliation point $C_W\cap E_0$, and the term $1$ is from the canonical non-terminal point $q$. This is absurd. 

{\bf Step 3.} \emph{$\cF_+$ is terminal on $Y_+\setminus\{p_+\}$.}

We write $\pi_+^*K_{\cF_+} = K_{\cF_{W_+}}+a_0^+E_0^++\Delta'_+$ where $a_0^+\geq 1$ and $\Delta'_+\wedge E_0^+=0$. Consider $F_+$ a leaf of $\cF_+$ and $q\in F_+$ a point other than $p_+$. If $q$ is not a terminal foliation singularity and $r$ is the index of $K_{W_+}$ at $E_0^+\cap F_{W_+}$, then by precise adjunction Lemma~\ref{lem:adjunction}, we have 
    \begin{align*}
        0 &> K_{\cF_+}\cdot F_+ \\ 
        &= \pi_+^*K_{\cF_+}\cdot F_{W_+} \\
        &= (K_{\cF_{W_+}}+a_0^+E_0^++\Delta'_+)\cdot F_{W_+} \\
        &= \deg(K_{F_{W_+}^\nu} + {\rm Diff}_{F_{W_+}}(\cF_{W_+})) + a_0^+E_0^+\cdot F_{W_+} + \Delta'_+\cdot F_{W_+} \\
        &\geq \big(-2+\frac{r-1}{r}+1\big)+\frac{a_0^+}{r}+0\geq 0,
    \end{align*}
where ${\rm Diff}_{F_{W_+}}(\cF_{W_+})$ is effective, the term $\frac{r-1}{r}$ is from $E_0^+\cap F_{W_+}$, and the term $1$ is from $q$. This is absurd.

{\bf Step 4.} \emph{$p_+$ is an lc foliation singularity}

Recall $\pi_+^*K_{\cF_+} = K_{\cF_{W_+}}+\Delta_+$ with $\operatorname{mult}_{E_i^+}\Delta^+\geq \varepsilon(E_i^+)$ for any $\pi_+$-exceptional divisor $E_i^+$. Note that all points on $F_+$ are nonsingular points of $Y_+$ and thus $K_{\cF_+}\cdot F_+\in\mathbb{Z}.$ Moreover, 
    \begin{align*}
        0 > K_{\cF_+}\cdot F_+ = \pi_+^*K_{\cF_+}\cdot F_{W_+} = (K_{\cF_{W_+}}+\Delta_+)\cdot F_{W_+} \geq -2+\sum\varepsilon(E_i^+)
    \end{align*}
and thus, $E_0^+$ from {\bf Step 1} is the unique $E_i^+$ with $\varepsilon(E_0^+)=1$ and it follows that $\operatorname{mult}_{E_0^+}\Delta_+=1$. 
 
Let $G$ be the dual graph of $\pi_+$-exceptional curves over $p_+$ and $d_i$ be the distance in $G$ between the vertices $[E_i^+]$ and $[E_0^+]$. Write $\Delta_+=E_0^++\sum_ia_iE_i^+$ and let $r_{ij}$ be the index of $K_{W_+}$ at $E_i^+\cap E_j^+$. Note that $G$ is connected and by foliation adjunction Lemma \ref{lem:adj_non-inv} on the non-invariant divisor $E_0^+$, 
    \begin{align*}
        0 &= \pi_+^*K_{\cF_+}\cdot E_0^+ \\
        &= (K_{\cF_{W_+}}+\sum a_iE_i^+)\cdot E_0^+ \\
        &= (K_{\cF_{W_+}}+E_0^+)\cdot E_0^++\sum_{i: d_i=1}a_iE_i^+\cdot E_0^+ \\
        &\geq\sum_{i: d_i=1}\frac{a_i}{r_{i0}}\geq 0.
    \end{align*}
So we have $a_i=0$ for those $i$ with $d_i=1$. 

Now suppose $a_i=0$ for those $i$ with $1\leq d_i\leq d-1$ for some integer $d\geq2$. For any vertex $[E^+_i]$ with $d_i=d$, $E_i^+$ is invariant and we have a vertex $[E^+_j]$ with $d_j=d-1$ connecting to $[E^+_i]$. Thus, by the induction hypothesis, we have 
    \[0=\pi_+^*K_{\cF_+}\cdot E^+_j = (K_{\cF_{W_+}}+\sum a_iE_i^+)\cdot E^+_j\geq 0+\frac{a_i}{r_{ij}}\geq 0,\]
where $K_{\cF_{W_+}}\cdot E^+_j\geq 0$ follows from the precise adjunction Lemma~\ref{lem:adjunction} and the fact that there are two canonical non-terminal singularities on $E_j^+$ as the intersection points of other components in $G$. 

Hence, $a_i=0$ for all $i$ with $d_i\geq 1$, near $p_+$ we see $\pi_+^*K_{\cF_+}=K_{\cF_{W_+}}+E_0^+$ is log canonical, and we conclude that $p_+$ is a log canonical foliation singularity.
\end{proof}

Our next goal is to demonstrate that $(Y_+,\cF_+)$, as in Proposition \ref{prop:dim_Z+=0_imply_lc_fol}, is indeed toric. We will prove that $W_+$ is toric and deduce that $Y_+$ is also toric. The following lemma, when applied to $Y_+$, enables us to control singularities on $W_+$ and hence on $Y_+$. Intuitively, as $\rho(Y_+)=1$, a necessary condition for $Y_+$ to be toric is that there are at most three singular points coming from invariant points. Cf. Proposition \ref{prop:dim_Z+=0_imply_toric}.

\begin{lemma}\label{lem:lc-chain}
Let $(X,\cF,p)$ be a germ of foliation on a nonsingular surface $X$ and $p\in X$ be a strict log canonical foliation singularity. 
Then, the dual graph of the exceptional divisors on the minimal resolution of $\cF$ over $p$ is a chain. 
\end{lemma}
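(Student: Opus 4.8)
The plan is to combine the classification of strict log canonical foliated surface singularities, Theorem~\ref{lc_sing}, with the elementary fact that a minimal resolution of a foliation on a \emph{nonsingular} surface is a composition of blow-ups at smooth points.

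\emph{Shape of the dual graph.} Write $\cG=\pi^{-1}\cF$ for the transform of $\cF$ under the minimal resolution $\pi\colon Y\to X$ of $\cF$ over $p$; we may assume $\pi$ is not an isomorphism, since otherwise there is nothing to prove. By Theorem~\ref{lc_sing} the exceptional locus of $\pi$ is $E_0\cup\bigcup_{i\ge 1}E_i$, where $E_0$ is the unique non-$\cG$-invariant component and $\bigcup_{i\ge 1}E_i$ is a disjoint union of $\cG$-chains. Each $\cG$-chain is a Hirzebruch--Jung string, hence a chain in its own right, in which only the initial curve $C_1$ has $\operatorname{Z}(\cG,C_1)=1$ while all later curves have $\operatorname{Z}=2$; thus $C_1$ is a $(-1)$-$\cG$-curve and the others are $(-2)$-$\cG$-curves, so by part~(3) of Theorem~\ref{lc_sing} the curve $E_0$ meets each $\cG$-chain transversally in exactly one point, lying on the $C_1$-end. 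Consequently the dual graph of $\bigcup_{i\ge 0}E_i$ is a tree whose only possible branch vertex is $[E_0]$, from which $\ell$ disjoint legs (the $\cG$-chains) emanate; so the graph is a chain precisely when $\ell\le 2$.

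\emph{Bounding $\ell$.} Since $X$ is nonsingular and $\pi$ resolves $\cF$ by Seidenberg's theorem, $\pi$ is a composition of blow-ups at smooth points; in particular $Y$ is smooth, the exceptional configuration over $p$ is simple normal crossings, and it is contracted to the smooth point $p$ by a sequence of blow-downs of $(-1)$-curves. Every $E_i$ with $i\ge 1$ lies on a Hirzebruch--Jung string and hence has self-intersection $\le -2$, so $E_0$ is the only component that can be a $(-1)$-curve, and, the configuration being non-empty, it must be one; therefore $E_0^2=-1$ and the first blow-down $Y\to Y_1$ in the contraction $Y\to X$ contracts $E_0$. Suppose now $\ell\ge 3$. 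Contracting $E_0$ carries the $C_1$-ends of the $\ell$ distinct $\cG$-chains to $\ell\ge 3$ distinct curves in $Y_1$, all passing through the image point $\bar p$ of $E_0$. But $Y_1\to X$ is again a composition of blow-ups at smooth points, so its exceptional divisor over $p$ has simple normal crossings, and in particular at most two of its components pass through any point — contradicting the $\ell\ge 3$ branches at $\bar p$. Hence $\ell\le 2$, and the dual graph is a chain.

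\emph{Expected difficulty.} The one place that needs care is the first step: extracting precisely from Theorem~\ref{lc_sing} that distinct $\cG$-chains are pairwise disjoint and that each attaches to $E_0$ at a single point of its $C_1$-end. Once the exceptional configuration is recognized as a tree with a single potential branch vertex, the normal-crossings obstruction for a composition of smooth blow-ups finishes the argument, so I do not anticipate a genuine obstacle.
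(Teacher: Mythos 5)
Your proof is correct and follows essentially the same route as the paper's: use Theorem~\ref{lc_sing} to get the star-shaped configuration with unique non-invariant $E_0$, identify $E_0$ as the unique $(-1)$-curve, and derive a contradiction from the simple-normal-crossings property of the exceptional locus of a composition of smooth blow-ups if there were three or more branches. The only difference is your cleaner justification that $E_0$ is the unique $(-1)$-curve — reading $E_i^2\le -2$ ($i\ge 1$) directly off the Hirzebruch--Jung string definition rather than invoking minimality of the resolution to rule out an invariant $(-1)$-curve — which is a minor but genuine simplification of that step.
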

\begin{proof}
Let $\pi : Y \to X$ be the minimal resolution of $\cF$ and $\bigcup_iE_i$ be the exceptional divisors over $p$. As $\cF$ is strict log canonical at $p$, by Theorem~\ref{lc_sing}, there is a unique non-invariant divisor $E_0$, and the dual graph of $\bigcup_iE_i$ is star-shaped. 

As both $Y$ and $X$ are nonsingular surfaces, $\pi=\pi_\ell\circ\cdots\circ\pi_0$ is a composition of $\ell$ nonsingular blowups $\pi_j:X_{j+1}\to X_j$ with $X_0:=X$ and $X_{\ell+1}:=Y$. Thus, there is at least one $E_i$ being the exceptional divisor of $\pi_\ell$ so that $E_i^2=-1$. If $E_i$ is invariant, then $X_\ell$ comes from the contraction of a $(-1)$-$\cF$-curve has at worst reduced singularities, and this violates the minimality of $\pi : Y \to X$. Hence, it is only possible that $E_0$ is the exceptional divisor of $\pi_\ell$, $E_0^2=-1$, and all other $E_i$ have $E_i^2 \leq -2$. 

Suppose for the sake of contradiction that there are at least three branches of the graph of $\bigcup_iE_i$. Then the center of $E_0$ on $X_\ell$ is a point which is contained in at least three prime exceptional divisors of $\pi_{\ell-1}\circ\cdots\circ\pi_0$. This is impossible since the dual graph of the exceptional divisors of $X_\ell\ra X$ is a tree. 
\end{proof}

\begin{proposition}\label{prop:dim_Z+=0_imply_toric} With the set up in Proposition \ref{prop:dim_Z+=0_imply_lc_fol}, $(Y_+,\cF_+)$ is a toric foliation and $|\Sing(Y_+)|\leq2.$
\end{proposition}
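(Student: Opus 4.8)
The strategy is to first pin down the singularities of $W_+$ (hence of $Y_+$) using Lemma \ref{lem:lc-chain} together with Proposition \ref{prop:dim_Z+=0_imply_lc_fol}, then verify the hypotheses of the toric characterization via Shokurov complexity from \cite{BMSZ:Toric}, and finally transport toricity along $\pi_+$.

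First I would analyze $W_+$. Since $\pi_+\colon W_+\to Y_+$ is a Property $(*)$ modification, $W_+$ is $\bQ$-factorial and klt, and $\cF_{W_+}$ is induced by the equi-dimensional fibration $g_+\colon W_+\to\bP^1$. By Proposition \ref{prop:dim_Z+=0_imply_lc_fol}, the only non-foliation-terminal point of $Y_+$ is the unique dicritical point $p_+$, which is nonsingular and a strict log canonical foliation singularity; all other points are terminal foliation singularities, so by the precise adjunction/classification (Theorem \ref{can_sing}) the surface $Y_+$ has at worst cyclic quotient singularities away from $p_+$. Applying Lemma \ref{lem:lc-chain} at $p_+$, the minimal resolution of $\cF_+$ over $p_+$ has a chain as dual graph, with the unique non-invariant component $E_0^+$ of self-intersection $-1$ and all others Hirzebruch--Jung. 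Pulling back through $\pi_+$ and the relative MMP/minimal resolution of $W_+$, one sees that the only non-snc or non-toric behaviour concentrates in at most a bounded number of fibers of $g_+$; counting against $\rho(Y_+)=1$ forces $|\Sing(Y_+)|\le 2$ (each singular point of $Y_+$ comes from an $\cF_+$-invariant point lying over a point of $\bP^1$, and the leaf-class computation $K_{\cF_+}\cdot F_+ < 0$ with $K_{\cF_+}\cdot F_+\in\bZ$ bounds their number, exactly as in the $N_i$-bookkeeping of Section \ref{sec:classification}).

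Next I would establish toricity. The plan is to exhibit on $W_+$ a torus-invariant-looking boundary: take $\Delta_{W_+}=\sum\varepsilon(E)E + (\text{the general-fiber-adjusted components})$ so that $(W_+,\Delta_{W_+})$ is log canonical with $-(K_{W_+}+\Delta_{W_+})$ nef/ample (using that $-K_{\cF_+}$ is ample and $\pi_+^*K_{\cF_+}=K_{\cF_{W_+}}+\Delta_+$), and then bound Shokurov's complexity $c(W_+,\Delta_{W_+})=\dim W_+ + \operatorname{rank}\Pic(W_+)_{\bQ} - |\Delta_{W_+}|$ by $0$. The components contributed by $E_0^+$, its adjacent chain, the general fibers of $g_+$, and a section give enough boundary divisors: on a rational surface with $\rho(W_+)$ controlled by the chain structure from Lemma \ref{lem:lc-chain}, one gets $\dim + \rho \le |\Delta_{W_+}|$. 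Then \cite[Theorem 1.2]{BMSZ:Toric} (the complexity-zero characterization of toric pairs) yields that $(W_+,\Delta_{W_+})$ is a toric pair; in particular $W_+$ is toric and $\cF_{W_+}$, being equivariant under the induced torus (it is induced by the toric morphism $g_+$), is a toric foliation. Pushing forward by the birational toric-equivariant morphism $\pi_+$, the variety $Y_+$ is toric and $\cF_+=(\pi_+)_*\cF_{W_+}$ is a toric foliation by Proposition \ref{prop:toric-ss}.

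The main obstacle I expect is the complexity estimate, i.e.\ producing \emph{enough} boundary components with the right coefficients while keeping $(W_+,\Delta_{W_+})$ log canonical — in particular handling the coefficients near $p_+$ (where $\operatorname{mult}_{E_0^+}\Delta_+=1$ and the chain components have coefficient $0$, as shown in Step 4 of Proposition \ref{prop:dim_Z+=0_imply_lc_fol}) and ensuring the general fibers of $g_+$ and a suitable section are genuinely part of a complexity-zero boundary. A secondary subtlety is the bound $|\Sing(Y_+)|\le 2$: one must rule out a third invariant singular point by the leaf-intersection count, checking carefully that contributions from terminal cyclic quotient points and from the $p_+$-chain do not leave room for more. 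The remaining step — deducing toricity of $Y_+$ from that of $W_+$ along $\pi_+$ — is routine since $\pi_+$ contracts only boundary divisors of the toric pair.
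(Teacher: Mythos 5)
Your overall plan — use Lemma~\ref{lem:lc-chain} to pin down the chain structure over $p_+$, invoke the Shokurov-complexity characterization of \cite{BMSZ:Toric}, then transport toricity — is the right strategy and matches the paper's. But there are two concrete gaps.

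First, the complexity bookkeeping is set up on the wrong variety. You propose to estimate $c(W_+,\Delta_{W_+})$ and conclude via complexity $\leq 0$. But after the paper's reduction (contracting the invariant chain components over $p_+$), $W_+$ has $\rho(W_+)=2$, so the complexity is $2+2-|\Delta_{W_+}|$, and the natural boundary one can actually write down on $W_+$ — namely the non-invariant section $E_0^+$ plus two fibers $F_1+F_2$ through the singular points — only gives $|\Delta_{W_+}|=3$, hence complexity $1$, which is not $<1$. (If instead you keep the whole minimal resolution, the Picard rank rises further while the coefficients of the invariant chain components in the log-canonical boundary are forced to be $0$ by Step~4 of Proposition~\ref{prop:dim_Z+=0_imply_lc_fol}, so they cannot help.) The paper avoids this by pushing the pair $(W_+,F_1+F_2+E_0^+)$ down to $Y_+$, where $\rho(Y_+)=1$ makes $K_{Y_+}+L_1+L_2$ anti-ample, and then perturbing by $\varepsilon C$ to get complexity $1-\varepsilon<1$. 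You need exactly this descent to $Y_+$ (plus the auxiliary $\varepsilon C$), and your proposal does not supply it. Also note the precise threshold in \cite{BMSZ:Toric} is ``complexity $<1$,'' not ``complexity zero'' as you state.

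Second, the bound $|\Sing(Y_+)|\leq 2$ should be a \emph{consequence} of toricity, not an independent input. You attempt to derive it beforehand from a leaf-intersection count, but that argument is vague and not needed. Once $(Y_+,\cF_+)$ is toric with $\rho(Y_+)=1$, all surface singularities are torus-invariant points and there are at most three of them; since the dicritical point $p_+$ is one of them and is nonsingular, there remain at most two singular points. This is short, and trying to prove the bound separately only adds an unjustified step. The remaining parts of your plan (Lemma~\ref{lem:lc-chain}, log canonicity of the boundary via adjunction, transporting toricity along $\pi_+$) are essentially correct and align with the paper.
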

\begin{proof}
By Proposition~\ref{prop:dim_Z+=0_imply_lc_fol}, there is exactly one dicritical point $p_+$ for $\cF_+$, $p_+\in Y_+$ is nonsingular, and $\cF_+$ is strictly log canonical. By Proposition \ref{prop:dim_Z+=0_imply_lc_fol}, the minimal resolution $\widetilde{\pi}: \widetilde{W} \to Y_+$ of $\cF_+$ at $p_+$ gives a Property $(*)$ modification and the associated dual graph is a chain by Lemma \ref{lem:lc-chain}. By contracting down all the invariant $\widetilde{\pi}$-exceptional divisors, the induced morphism $\pi_+: W_+\to Y_+$ is still a Property $(*)$ modification so we can assume $\rho(W_+)=2$. Note that then $\pi_+^*K_{\cF_+} = K_{\cF_{W_+}}+E_0^+$, $\Sing(W_+)\cap E_0^+=\{p_1,\dots, p_s\}$ has $s\leq2$, $\cF_{W_+}$ is terminal at all $p_i$'s, and by Theorem \ref{can_sing} each $p_i\in W_+$ is a cyclic quotient singularity. 

Consider now a log pair $(W_+,F_1+F_2+E_0^+)$ with $F_1\neq F_2$ and $\{p_1,\dots,p_s\}\subseteq E_0^+\cap(F_1\cup F_2).$ 
Let $G=\bigcup_{i=0}^mG_i$ be the prime $\widetilde{\pi}$-exceptional divisors. 
Note that for $\widetilde{F_j}$ the strict transform of $F_j$, $j=1,2$, the pair $(\widetilde{W},\widetilde{F_1}+\widetilde{F_2}+\sum_{i=0}^mG_i)$ is log canonical and 
\[(K_{\widetilde{W}}+\widetilde{F_1}+\widetilde{F_2}+\sum_{i=0}^mG_i)\cdot G_j = 0,\ j=0,\dots,m.\]
Hence $K_{\widetilde{W}}+\widetilde{F_1}+\widetilde{F_2}+\sum_{i=0}^mG_i = \widetilde{\pi}^*(K_{Y_+}+L_1+L_2)$ with $L_i=(\pi_+)_*F_i$. 
Therefore, $(Y_+,L_1+L_2)$ is log canonical. 

Since $(K_{Y_+}+L_1+L_2)\cdot \pi_*F= (K_{W_+}+E_0^++F_1+F_2)\cdot F=-1$ for a general fiber $F$ of $g_+$ and $\rho(Y_+)=1$, the divisor $K_{Y_+}+L_1+L_2$ is anti-ample.
Hence, there is an effective divisor $C\sim_\bQ -(K_{Y_+}+L_1+L_2)$ such that $K_{Y_+}+L_1+L_2+\varepsilon C$ is anti-ample and log canonical for a sufficiently small $\varepsilon>0$. 
By \cite{BMSZ:Toric}, as the complexity $c(Y_+,L_1+L_2+\varepsilon C)= 1-\varepsilon<1$, there is a toric log Calabi-Yau pair $(Y_+,\Delta)$ with $\Delta\geq L_1+L_2$ such that $\Delta-L_1-L_2$ is supported on $C$. In particular, both $Y_+$ and $W_+$ are toric varieties, $g_+$ is a toric morphism, and $\cF_+$ is a toric foliation. 
    
Note that all singularities of toric surfaces are torus-invariant points and $|\Sing(Y_+)|\leq 3$ by $\rho(Y_+)=1$. As $p_+$ is a nonsingular torus-invariant point, we see $|\Sing(Y_+)|\leq 2.$
\end{proof}

\begin{theorem}\label{prop:dim_Z+=1}
Let $X$ be a nonsingular $(-K_X)$-slope unstable weak del Pezzo surface and $\cF\subseteq T_X$ be the maximal destabilizing subsheaf. Suppose $\varphi_+:X\to Y_+$ is a $K_\cF$-MMP with the Mori fiber space outcome $Y_+\ra Z_+$. Then $\dim Z_+=1$.     
\end{theorem}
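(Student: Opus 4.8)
The plan is to rule out the case $\dim Z_+=0$; since $f_+\colon Y_+\to Z_+$ is a Mori fiber space on a surface, $\dim Z_+\in\{0,1\}$, so this suffices. Assume $\dim Z_+=0$. Then $\rho(Y_+)=1$, both $-K_{Y_+}$ and $-K_{\cF_+}$ are ample, and by Lemma \ref{D_MMP_canonical} $Y_+$ is a del Pezzo surface with canonical (hence Du Val, Gorenstein) singularities. By Proposition \ref{prop:dim_Z+=0_imply_toric}, $(Y_+,\cF_+)$ is a toric foliation with $|\Sing(Y_+)|\le 2$; the only Gorenstein toric del Pezzo surfaces of Picard rank one are $\bP^2$, $\bP(1,1,2)$ and $\bP(1,2,3)$, all of which satisfy the bound on $|\Sing(Y_+)|$. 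Writing $\cF_+=\cF_V$ for a line $V\subseteq N_\bC$ (Proposition \ref{prop:toric-ss}), algebraic integrability forces $V$ to be rational, and since $-K_{\cF_V}=\sum_{\rho\subseteq V}D_\rho$ (Proposition \ref{prop:toric_fol_can_div}) is ample on a Picard-rank-one toric surface only when $V$ is spanned by one of the three rays of the fan, there are at most three candidate pairs $(Y_+,\cF_+)$ to examine on each surface.

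For each candidate I would combine two kinds of input. First, Proposition \ref{prop:tor_fol_sing} together with the elementary description of dicritical toric surface singularities (the torus-fixed point $x_\tau$ is a dicritical singularity of $\cF_V$ precisely when $V$ meets the relative interior of the two-dimensional cone $\tau$) pins down $\Sing(\cF_V)$ and its unique dicritical point; in several cases this point is a singular point of $Y_+$, which already contradicts Proposition \ref{prop:dim_Z+=0_imply_lc_fol}. Second, for the remaining candidates — the pencil of lines through a torus-fixed point on $\bP^2$ and its analogues on $\bP(1,1,2)$ and $\bP(1,2,3)$, whose dicritical point is a smooth point of $Y_+$ — one must descend to $X$. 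Here I would use the slope inequality
\[\mu(\cF)=(-K_\cF)\cdot(-K_X)\le \varphi_+^*(-K_{\cF_+})\cdot(-K_X)=(-K_{\cF_+})\cdot(-K_{Y_+}),\]
valid because $-K_\cF\le\varphi_+^*(-K_{\cF_+})$ (the foliated discrepancies of the $K_\cF$-negative morphism $\varphi_+$ are positive, as $\varphi_+$ is an isomorphism near $p_+$ and $\cF_+$ is terminal elsewhere by Proposition \ref{prop:dim_Z+=0_imply_lc_fol}), $-K_X$ is nef, and the projection formula applies. The right-hand side is computed in toric terms ($3$ on $\bP^2$, and at most $2$ on the other two surfaces), while $\mu(\cF)=\mu_{\max}(T_X)\ge\mu(T_X)=K_X^2/2$. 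Combined with the observation — again read off from the fan — that the only blow-ups over $Y_+$ producing $\cF$-invariant exceptional divisors lie over $\Sing(Y_+)$ and over a bounded chain of non-dicritical foliation singularities, never over $p_+$, this forces $X$ into a short explicit list on which one checks by hand that either $\mu(\cF)<\mu(T_X)$ or $X$ is not a weak del Pezzo surface. For $Y_+=\bP^2$ this is immediate: no blow-up of $\bP^2$ has an $\cF$-invariant exceptional divisor, so $X=\bP^2$ and $\mu(\cF)\le 3<9/2=\mu(T_{\bP^2})$, absurd. Either way $\dim Z_+=0$ is impossible, so $\dim Z_+=1$.

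I expect the main obstacle to be this last step: carrying out the finitely many but somewhat delicate toric computations of $\Sing(\cF_V)$ and of $(-K_{\cF_+})\cdot(-K_{Y_+})$, and, in the surviving cases, actually identifying the admissible surfaces $X$ — controlling the foliated discrepancies and the $(-2)$-curve configurations well enough to exclude each one. The structural part, that $(Y_+,\cF_+)$ must belong to a short explicit list of toric foliations, is already supplied by the preceding propositions; what remains is to verify that none of them arises from a slope-unstable nonsingular weak del Pezzo surface.
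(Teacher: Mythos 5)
Your overall structure closely parallels the paper's: reduce to $\dim Z_+=0$, invoke Proposition~\ref{prop:dim_Z+=0_imply_toric} and Lemma~\ref{D_MMP_canonical} to see that $Y_+$ is a Gorenstein toric del Pezzo surface of Picard rank one with $|\Sing(Y_+)|\leq2$, list the candidate fans and lines $V$ via Propositions~\ref{prop:toric-ss}, \ref{prop:toric_fol_can_div}, \ref{prop:tor_fol_sing}, and cross-check against Proposition~\ref{prop:dim_Z+=0_imply_lc_fol}. The slope bound $\mu(\cF)\le(-K_{\cF_+})\cdot(-K_{Y_+})$ you derive from $K_\cF\geq\varphi_+^*K_{\cF_+}$ (terminality of $\cF_+$ away from $p_+$, plus $\varphi_+$ being an isomorphism near $p_+$) together with nefness of $-K_X$ and the projection formula is correct, and is a sensible observation even though the paper does not use it.

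However, the argument breaks at the final step. Your key structural claim --- that ``the only blow-ups over $Y_+$ producing $\cF$-invariant exceptional divisors lie over $\Sing(Y_+)$ and over a bounded chain of non-dicritical foliation singularities'' --- is false. Blowing up a \emph{nonsingular} point $q$ of a rank-one foliation produces an $\cF$-invariant exceptional divisor $E$ (locally, for $\{dx=0\}$ one checks $E$ is a union of leaves), with $K_{\cF'}=\pi^*K_\cF+E$, so $K_{\cF'}\cdot E=E^2=-1<0$ and $E$ is perfectly contractible by a $K_\cF$-MMP. Hence the deduction ``$X=\bP^2$'' in case no.~9 does not follow: $X$ can a priori be a long chain of blowups of $\bP^2$, constrained only by the requirements that no center lies over $p_+$ or on a $(-2)$-curve and that the slope condition persists. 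Your slope inequality alone gives $K_X^2\leq 2(-K_{\cF_+})\cdot(-K_{Y_+})$, which for $\bP^2$ allows $K_X^2$ as large as $6$ --- no contradiction yet. For $\bP(1,2,3)$ it allows $K_X^2\le 2$, also not an immediate contradiction.

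The paper closes this gap with the bookkeeping invariant $N_i=2K_{\cF_i}\cdot K_{X_i}-K_{X_i}^2$, which by Lemma~\ref{lem:blowup_N_difference} changes by $\pm1$ per blowup according to whether the center is a nonsingular or a reduced singular foliation point. Combining $N_\ell\geq0$ (slope instability), the computed $N_0\in\{-4,-4,-3\}$, the constraint that $q_0$ must be a nonsingular foliation point (all singular points of $\cF_{\widetilde{Y_+}}$ being either $p_+$ or on $(-2)$-curves), and a fiber-by-fiber tracking of where subsequent centers can sit on the leaf $L_0$ and its exceptional chain, the paper shows the blowup process necessarily terminates before $N_i$ can recover to $\ge0$. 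That finer combinatorial argument is the genuinely delicate part of the proof, and your write-up replaces it with an incorrect claim rather than supplying a substitute.
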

\begin{proof}
Assume by contradiction that $\dim Z_+=0$. By Proposition~\ref{prop:dim_Z+=0_imply_toric}, $Y_+$ is a toric variety of $\rho(Y_+)=1$ with at most two singular points and $\cF_+:=(\varphi_+)_*\cF$ is a toric foliation. Note that $Y_+$ has canonical singularities by Lemma~\ref{D_MMP_canonical} and hence is Gorenstein. By the classification of Gorenstein del Pezzo toric surfaces (cf. \cite[Theorem 8.3.7]{CLS}), there are exactly 16 isomorphism classes:
    \begin{center}
    \includegraphics[scale=0.3]{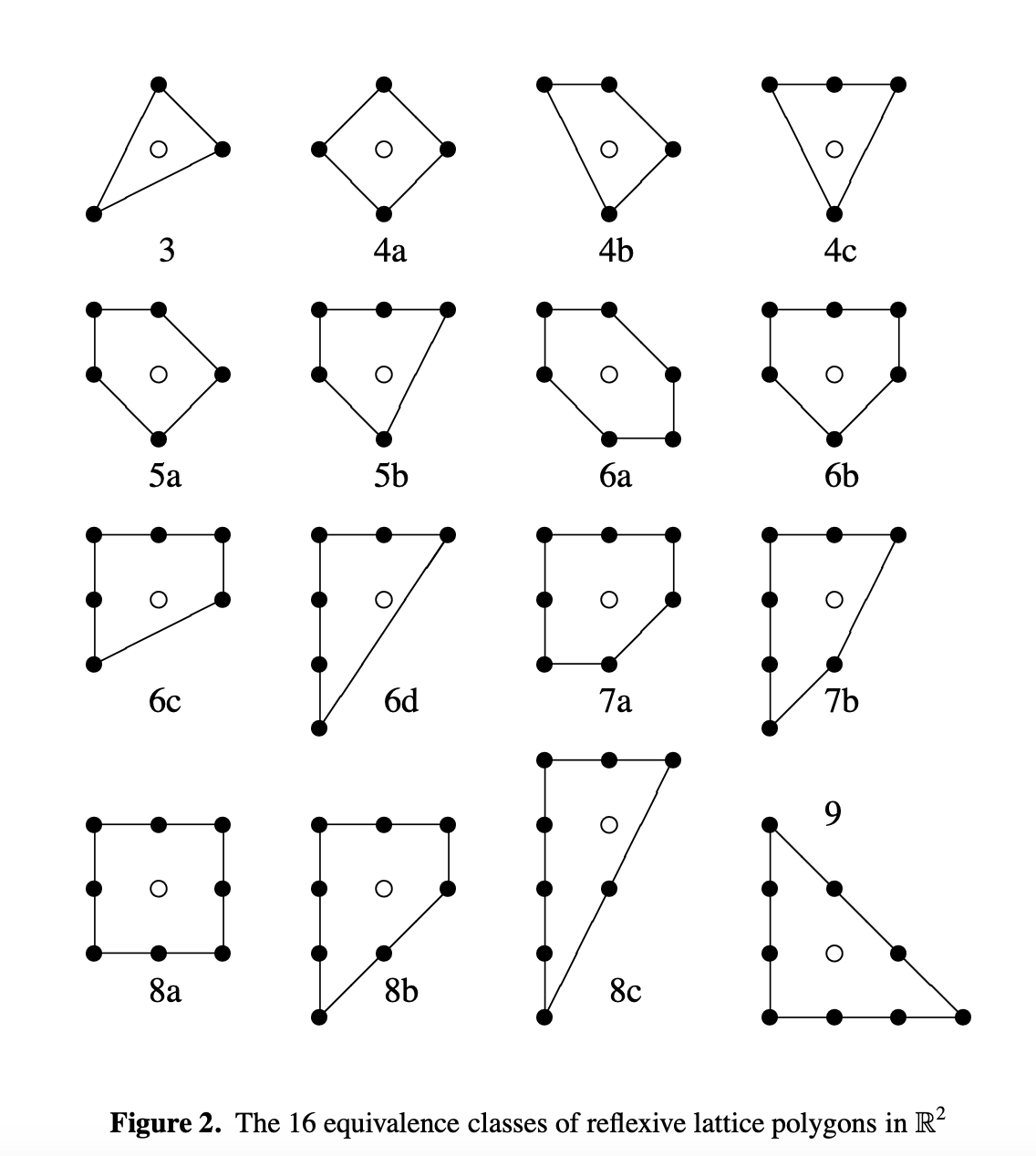}
    \end{center}
Since $Y_+$ has Picard number one and $|\Sing(Y_+)|\leq 2$ by Proposition \ref{prop:dim_Z+=0_imply_toric}, we only need to consider no. 6d, 8c, and 9. 

Let $\Sigma$ be the fan of $Y_+$ and $V\subseteq N_\bC$ be the subspace corresponding to the toric foliation $\cF_+$ as in Proposition \ref{prop:toric-ss}.
By Proposition~\ref{prop:toric_fol_can_div}, $V=\bC v_i$ for some $i=1,2,3$ as $-K_{\cF_+}$ is ample, where $v_i$'s are the primitive generators of the rays in $\Sigma(1)$. 
By Proposition~\ref{prop:tor_fol_sing}, there is only one foliation singularity, being a dicritical point at a nonsingular torus invariant point $p_+\in Y_+$, of $\cF_+$. We collect below all possibilities of $(\Sigma, V)$: 

\[
\begin{tabular}{|c|c|c|c|c|c|}
\hline
 case & $v_1$ & $v_2$ & $v_3$  & $V$ & $p_+$\\ \hline
 6d& $(1,0)$ & $(0,-1)$ & $(-3,2)$ & $\bC v_3$ & $\sigma_{12}$  \\ \hline
 8c& $(1,0)$ & $(0,-1)$ & $(2,-1)$  &$\bC v_i, i=2,3$ & 
 $\sigma_{\widehat{i}}=\sum_{j\neq i}\bR_{\geq0} v_j$ 
 \\ \hline
 9& $(1,0)$ & $(0,-1)$ & $(-1,1)$  & $\bC v_i,i=1,2,3$ &
 $\sigma_{\widehat{i}}=\sum_{j\neq i}\bR_{\geq0} v_j$
 \\ \hline
\end{tabular}\]

We factor $X\to Y_+$ through $\widetilde{Y_+}$, the minimal resolution of $Y_+$. 
Write $X\to \widetilde{Y_+}$ as $X:=X_\ell\xrightarrow{\pi_{\ell-1}} X_{\ell-1}\xrightarrow{\pi_{\ell-2}}\cdots\xrightarrow{\pi_0}X_0:=\widetilde{Y_+}$, where each $\pi_i$ is a nonsingular blowup at $q_i$ with $E_i$ the unique $\pi_i$-exceptional divisor for $0\leq i\leq \ell-1$. Note that $q_i$ doesn't lie on any $(-2)$-curves as each $-K_{X_i}$ is nef \cite[Proposition 8.1.2]{Dol}.

Denote $\cF_i$ the induced foliation on $X_i$ and $N_i:=2K_{\cF_i}\cdot K_{X_i}-K_{X_i}^2$, $0\leq i\leq\ell$. As $N_\ell\geq0$ and $N_0=-4,-4,-3$ respectively for no. 6d, 8c, and 9, we must have $\ell\geq1$. Note that $\cF_{\widetilde{Y_+}}$ remains a toric foliation and so $\Sing(\cF_{\widetilde{Y_+}})$ consists of torus invariant points. 

Let $\alpha$ be the number of nonsingular foliation points among $\{q_0,\dots,q_{\ell-1}\}$. We recall that no curve contracted by the $K_\cF$-MMP is over the dicritical point $p_+$ (cf. the proof of {\bf Step 2} in Proposition~\ref{prop:dim_Z+=0_imply_lc_fol}). 
Since $\cF_{\widetilde{Y_+}}$ is toric and all foliation singularities of $\cF_{\widetilde{Y_+}}$ are either the dicritical point or points on some $(-2)$-curves extracted by the minimal resolution $\widetilde{Y_+}\to Y_+$, no $q_i$ is over $p_+$ and hence, $q_0$ must be a nonsingular foliation point and $\alpha\geq 1$. 

On the other hand, $0\leq N_\ell = N_0 - \alpha + (\ell-\alpha)$ by Lemma \ref{lem:blowup_N_difference}, $1\leq \ell=K_{Y_+}^2-K_X^2<K_{Y_+}^2$, and thus $0\leq\alpha<\frac{1}{2}(N_0+K^2_{Y_+})$. This immediately implies that no 6d is impossible as $1\leq\alpha<\frac{1}{2}(-4+K^2_{Y_+})=1.$
The data of remaining cases 8c and 9 is summarized below. 
\[
\begin{tabular}{|c|c|c|c|c|}
\hline
 case no. & $N_0$ & $K_{Y_+}^2$ & $\alpha$ & $2\alpha-N_0\leq\ell< K_{Y_+}^2$ \\ \hline
 8c& $-4$ & 8 & 1 & $ 6\leq\ell<8$ \\ \hline
 9& $-3$ & 9 & 1, 2 & $ 5, 7\leq\ell<9$ \\ \hline
\end{tabular}\]

We will show that none of the above cases can happen under the following constraints:
\begin{enumerate}
    \item $N_\ell= 2K_\cF\cdot K_X-K_X^2\geq 0$;
    \item $q_i$ is not over $p_+$ and $q_0$ is a nonsingular foliation point; 
    \item $q_i$ doesn't lie on any $(-2)$-curves as each $-K_{X_i}$ is nef;
    \item $N_{i+1}=N_i+ 1$ (resp. $N_i-1$) if $q_i\in\Sing(\cF_i)$ (resp. $q_i\notin\Sing(\cF_i)$).
\end{enumerate}

{\bf Case (no. 8c):} Let $\Sigma=\Sigma_{-1}$ be the fan of $Y_+\cong\overline{\bF}_2$, $\cF_+$ be associated to $V_1=\bC v_3$ or $V_2=\bC v_2$, and recall $\alpha=1$. 
(See Figure~\ref{fig:Sigma2} for the case $V_1=\bC v_3$.) 
It suffices to consider the case $V_1$ since $(Y_+=X_{\Sigma_{-1}},\cF_{V_1})$ is isomorphic to $(X_{\Sigma_{-1}},\cF_{V_2})$ as toric foliations. 
The $1$-dimensional cones in the fan $\Sigma_0$ of the minimal resolution $\widetilde{Y_+}$ are the elements in $\Sigma_{-1}(1)\cup\{\rho_4\}$. As $q_0$ is a nonsingular foliation point, there is a unique leaf $L_0\subseteq\widetilde{Y_+}$ containing $q_0$. 
If $L_0$ is torus invariant, then $L_0=D_{\rho_i}$ for $i=1,2,3,4$. 
Note that $L_0\neq D_{\rho_4}$ since $q_0\in L_0$ and $D_{\rho_4}^2=-2$. 
Also $L_0\neq D_{\rho_3}$ as $D_{\rho_3}$ is not foliation invariant by \cite[Corollary 3.3]{CC:Tf}. 
By a direct computation, $D_{\rho_1}^2=2$ and $D_{\rho_2}^2=0$. 

\begin{claim}
    $L_0^2=2$ if $L_0$ is not torus invariant. 
\end{claim}
\begin{proof}
The fan $\Sigma_2$ is obtained from $\Sigma_0$ by first performing the star subdivision along $\rho_5$ and then $\rho_6$, cf. Figure~\ref{fig:Sigma2}. 
Equivalently, $X_{\Sigma_2} \to X_{\Sigma_0}=\widetilde{Y_+}$ is a composition of two nonsingular blowups at torus invariant points. 
The lattice $N_2 :=\bZ v_1\oplus\bZ v_3$ admits a natural quotient map $g: N_2 \to N_1=\bZ v_1$, which induces a toric morphism $f: X_{\Sigma_2}\to \bP^1$. By \cite[Proposition 3.13(1)]{CC:Tf}, the toric foliation $\cF_{V_1}$ on $X_{\Sigma_2}$ is induced by the fibration $f$. Hence, the strict transformation $L_0''$ on $X_{\Sigma_2}$ is a general fiber of $f$. Therefore, $0=(L_0'')^2=L_0^2-2$, and the claim follows. 
\end{proof}

\begin{figure}
\centering
    \begin{tikzpicture}
        \def\r{0.75}
        \draw[thick,green,->] (0,0) -- (-4*\r,0) node[anchor=north] {$\rho_4$};
        \filldraw[black] (-1*\r,0) circle (2pt);
        \draw[blue] (-4*\r,2*\r) -- (4*\r,-2*\r) node[anchor=west] {$V_1$};
        
        \draw[thick,->] (0,0) -- (3*\r,0) node[anchor=west] {$\rho_1$};
        \filldraw[black] (\r,0) circle (2pt) node[anchor=south] {$(1,0)$};
        \draw[thick,->] (0,0) -- (0,-2*\r) node[anchor=north] {$\rho_2$};
        \filldraw[black] (0,-\r) circle (2pt) node[anchor=east] {$(0,-1)$};
        \draw[thick,->] (0,0) -- (-4*\r,2*\r) node[anchor=south] {$\rho_3$};
        \filldraw[black] (-2*\r,\r) circle (2pt) node[anchor=south] {$(-2,1)$};
        \draw[thick,red,->] (0,0) -- (2*\r,-2*\r) node[anchor=north] {$\rho_5$};
        \filldraw[black] (\r,-1*\r) circle (2pt) node[anchor=north] {$(1,-1)$};
        \draw[thick,orange,->] (0,0) -- (3*\r,-1.5*\r) node[anchor=west] {$\rho_6$};
        \filldraw[black] (2*\r,-1*\r) circle (2pt) node[anchor=south] {$(2,-1)$};
    \end{tikzpicture}
    \caption{$\Sigma_0(1)=\{\rho_1,\rho_2,\rho_3,\rho_4\}$; $\Sigma_2(1)=\{\rho_i:1\leq i\leq 6\}$.}\label{fig:Sigma2}
\end{figure}
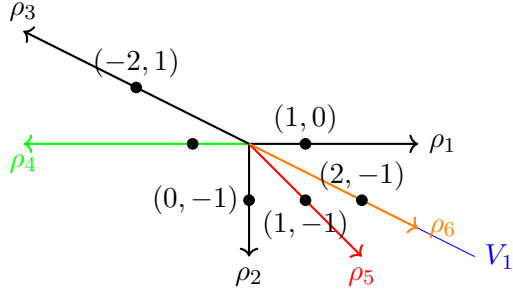

In either case, whether $L_0$ is torus invariant or not, we have $L_0^2\in\{0,2\}$. 
Let $L_i$ be the strict transform of $L_0$ on $X_i$. 
As $\alpha=1$ and $q_0$ is a nonsingular foliation point, all $q_i$'s are foliation singularities for $i\geq 1$. 
Since $q_1$ does not lie on any $(-2)$-curves, $q_1=L_1\cap E_1$, and similarly, $q_2=L_2\cap E_2$ by the given constraints. 
Now on $X_3$, if $L_0^2=0$, then $L_2^2=-2$ and all foliation non-dicritical singularity lie on some $(-2)$-curve. 
Hence, we cannot proceed any further with the blowup, contradicting $\ell\geq 6$. 
Therefore, $L_0^2=2$ and by the same consideration, the procedure above can only proceed till $\ell=4$, contradicting $\ell\geq 6$. (See Figure~\ref{fig:no_8c_L^2=2}.)

\begin{figure}
    \centering
    \begin{tikzpicture}
        \def\r{0.9}
        \filldraw[black] (0,0) circle (2pt);
        \draw (0.5*\r,-0.5*\r) -- (-1.5*\r,1.5*\r) node[anchor=south] {$L_1$};
        \draw (-0.5*\r,-0.5*\r) -- (1.5*\r,1.5*\r) node[anchor=south] {$E_1$};
        \draw (0,0) node[anchor=east] {$q_1$};

        \draw[orange] (-0.5*\r,0.5*\r) circle (9pt);
        \draw[orange] (-0.5*\r,0.5*\r) node {$1$};
        \draw[orange] (0.5*\r,0.5*\r) circle (9pt);
        \draw[orange] (0.5*\r,0.5*\r) node {$-1$}; 

        \draw[<-] (1.5*\r,0.5*\r) -- (2.5*\r,0.5*\r);

        \filldraw[black] (4*\r,0) circle (2pt);
        \draw (4.5*\r,-0.5*\r) -- (2.5*\r,1.5*\r) node[anchor=south] {$L_2$};
        \draw (3.5*\r,-0.5*\r) -- (5.5*\r,1.5*\r) node[anchor=south] {$E_2$};
        \draw (4.5*\r,1.5*\r) -- (6.5*\r,-0.5*\r) node[anchor=north] {$(\pi_1)_*^{-1}E_1$};
        \draw (4*\r,0) node[anchor=east] {$q_2$};

        \draw[orange] (3.5*\r,0.5*\r) circle (9pt);
        \draw[orange] (3.5*\r,0.5*\r) node {$0$};
        \draw[orange] (4.5*\r,0.5*\r) circle (9pt);
        \draw[orange] (4.5*\r,0.5*\r) node {$-1$}; 
        \draw[orange] (5.5*\r,0.5*\r) circle (9pt);
        \draw[orange] (5.5*\r,0.5*\r) node {$-2$}; 

        \draw[<-] (6.5*\r,0.5*\r) -- (7.5*\r,0.5*\r);

        \filldraw[black] (9*\r,0) circle (2pt);
        \draw (9.5*\r,-0.5*\r) -- (7.5*\r,1.5*\r) node[anchor=south] {$L_3$};
        \draw (8.5*\r,-0.5*\r) -- (10.5*\r,1.5*\r) node[anchor=south] {$E_3$};
        \draw (9.5*\r,1.5*\r) -- (11.5*\r,-0.5*\r); 
        \draw (10.5*\r,-0.5*\r) -- (12.5*\r,1.5*\r); 
        \draw (9*\r,0) node[anchor=east] {$q_3$};

        \draw[orange] (8.5*\r,0.5*\r) circle (9pt);
        \draw[orange] (8.5*\r,0.5*\r) node {$-1$};
        \draw[orange] (9.5*\r,0.5*\r) circle (9pt);
        \draw[orange] (9.5*\r,0.5*\r) node {$-1$}; 
        \draw[orange] (10.5*\r,0.5*\r) circle (9pt);
        \draw[orange] (10.5*\r,0.5*\r) node {$-2$}; 
        \draw[orange] (11.5*\r,0.5*\r) circle (9pt);
        \draw[orange] (11.5*\r,0.5*\r) node {$-2$}; 
    \end{tikzpicture}
    \caption{The visualization of {\bf Case (no. 8c)} when $L_0^2=2$.}\label{fig:no_8c_L^2=2}
\end{figure}
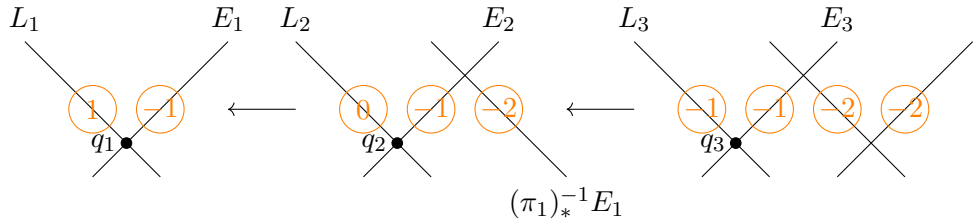

{\bf Case (no. 9):} $Y_+=\bP^2$, $\cF_+$ is the radial foliation, and $\alpha=1$, $2$. 
Let $L_0$ be the leaf containing $q_0$. 
Then $L_0^2=1$. 
Let $L_i$ be the strict transform of $L_0$ on $X_i$. 
When $\alpha=1$, then by the same argument as in the {\bf Case (no. 8c)}, we can not proceed with the blowup when $\ell=3$, which contradicts $\ell\geq 5$. 
When $\alpha=2$, then exactly one of $q_i\neq q_0$, say $q_{i_0}$, is a nonsingular foliation point. 
We may assume $q_{i_0}=q_1$, so for $i\geq 2$, the points $q_i$'s are not nonsingular foliation points. 
If $q_1$ is over $q_0$, then $L_2^2=L_0^2-2=-1$. 
As $q_2$ is not a nonsingular foliation point, $q_2\in\{L_2\cap(\pi_1)_*^{-1}E_1, L_2\cap E_2\}$. 
In either case, any foliation singularity on $X_3$ lies on some $(-2)$-curve, and thus, we can only proceed with the blowup till $\ell=3$, contradicting $\ell\geq 5$. 
So $q_1$ is not over $q_0$. 
Let $L'_1$ be the leaf of $\cF_1$ containing $q_1$. 
Note that $(L'_1)^2=1$. 
Then, the same argument again shows that we can only proceed with the blowup up to $\ell=3+3=6$, contradicting $\ell\geq 7$. 
\end{proof}

\bibliographystyle{alpha}
\bibliography{Unstable_Fano}
\end{document}